\newtheorem{theorem}{Theorem}[section]
\newtheorem{lemma}[theorem]{Lemma}
\newtheorem{corollary}[theorem]{Corollary}
\newtheorem{fact}[theorem]{Fact}
\theoremstyle{definition}
\newtheorem{definition}[theorem]{Definition}
\newtheorem{example}[theorem]{Example}
\theoremstyle{remark}
\newtheorem{remark}[theorem]{Remark}
\setlist[enumerate,1]{label=(\arabic*)}
\setlist[enumerate,2]{label=(\roman*)}
\numberwithin{equation}{section}
\numberwithin{figure}{section}
\numberwithin{table}{section}
\pgfplotsset{compat=1.17}
\title{An inequality for the convolutions on unimodular locally compact groups and the optimal constant of Young's inequality}
\author{Takashi Satomi}
\date{\today} 
\begin{document}

\maketitle

\begin{abstract}

  Let $\mu$ be the Haar measure of a unimodular locally compact group $G$ and $m (G)$ as the infimum of the volumes of all open subgroups of $G$.
  The main result of this paper is that
  \begin{align*}
    \int_{G}^{} f \circ \left( \phi_1 * \phi_2 \right) \left( g \right) dg \leq \int_{\mathbb{R}}^{} f \circ \left( \phi_1^* * \phi_2^* \right) \left( x \right) dx
  \end{align*}
  holds for any measurable functions $\phi_1, \phi_2 \colon G \to \mathbb{R}_{\geq 0}$ with $\mu ( \mathrm{supp} \; \phi_1 ) + \mu ( \mathrm{supp} \; \phi_2 ) \leq m(G)$ and any convex function $f \colon \mathbb{R}_{\geq 0} \to \mathbb{R}$ with $f(0) = 0$.
  Here $\phi^*$ is the rearrangement of $\phi$.

  Let $Y_O(P,G)$ and $Y_R(P,G)$ denote the optimal constants of Young's and the reverse Young's inequality, respectively, under the assumption $\mu ( \mathrm{supp} \; \phi_1 ) + \mu ( \mathrm{supp} \; \phi_2 ) \leq m(G)$.
  Then we have $Y_O(P,G) \leq Y_O(P,\mathbb{R})$ and $Y_R(P,G) \geq Y_R(P,\mathbb{R})$ as a corollary.
  Thus, we obtain that $m (G) = \infty$ if and only if $H (p,G) \leq H (p, \mathbb{R})$ in the case of $p' := p/(p-1) \in 2 \mathbb{Z}$, where $H (p,G)$ is the optimal constant of the Hausdorff--Young inequality.

\end{abstract}

\noindent
\textbf{Keywords:} convolution, convexity, locally compact group, rearrangement, $L^p$-space, Young's inequality, reverse Young's inequality, Hausdorff--Young inequality.

\noindent
\textbf{MSC2020:} Primary 46E30; Secondary 22D15, 28C10, 39B62, 42A85, 43A05.

\section{Introduction}

The main result of this paper is a generalization of an inequality for the convolution and the rearrangement by Wang--Madiman \cite[Theorem 7.4]{MR3252379} to any unimodular locally compact group $G$ (Theorem \ref{thm:arrange-convex-inequality}).
As a corollary, the optimal constants of Young's inequality (Corollary \ref{cor:Young-Fournier} \ref{item:Young-Fournier-obverse}) and the Hausdorff--Young inequality (Corollary \ref{cor:Hausdorff-Young-mG-infinite}) are bounded from above, and the optimal constant of the reverse Young's inequality is bounded from below (Corollary \ref{cor:Young-Fournier} \ref{item:Young-Fournier-reverse}).
These inequalities are best possible for $G = \mathbb{R}$.

The rearrangement of a measurable function is defined as follows.

\begin{definition}
  \label{def:rearrange}

  The rearrangement $\phi^* \colon \mathbb{R} \to \mathbb{R}_{\geq 0}$ of a measurable function $\phi \colon G \to \mathbb{R}_{\geq 0}$ on a measure space $(G , \mu )$ is defined as
  \begin{align*}
    \phi^* (x) := \inf \{ t \geq 0 \mid \mu ( \phi^{-1} ( \mathbb{R}_{> t})) \leq 2 |x| \}.
  \end{align*}

\end{definition}

For $G = \mathbb{R}$, the function $\phi^*$ is called the symmetric decreasing rearrangement of $\phi$.
Definition \ref{def:rearrange} was essentially defined by O'Neil \cite[Definition 1.2]{MR146673} to generalize the Lorentz space \cite[Section 1]{MR28925} to any measurable space $G$.

Now, we write $\mathrm{supp} \; \phi := \{ g \in G \mid \phi (g) \neq 0 \}$ (we assume that the definition of measurable function includes the assumption that $\mathrm{supp} \; \phi$ is $\sigma$-finite).
We denote by $\mu$ the Haar measure of a unimodular locally compact group $G$ and by $m (G)$ the infimum of the volumes of all open subgroups of $G$.
The main theorem of this paper is as follows.

\begin{theorem}
  \label{thm:arrange-convex-inequality}

  Suppose measurable functions $\phi_1,\phi_2 \colon G \to \mathbb{R}_{\geq 0}$ on a unimodular locally compact group $G$ satisfy
  \begin{align}
    \mu ( \mathrm{supp} \; \phi_1 ) + \mu ( \mathrm{supp} \; \phi_2 ) & \leq m(G), \label{eq:supp-small}                                     \\
    \phi_1 * \phi_2 (g) , \phi_1^* * \phi_2^* (x)                     & < \infty \; \text{a.e.}, \label{eq:arrange-convex-inequality-finite}
  \end{align}
  and a convex function $f \colon \mathbb{R}_{\geq 0} \to \mathbb{R}$ satisfies $f(0) = 0$.
  If $\int_{G}^{} f \circ \left( \phi_1 * \phi_2 \right) \left( g \right) dg$ and $\int_{\mathbb{R}}^{} f \circ \left( \phi_1^* * \phi_2^* \right) \left( x \right) dx$ can be defined, then we have
  \begin{align}
    \int_{G}^{} f \circ \left( \phi_1 * \phi_2 \right) \left( g \right) dg \leq \int_{\mathbb{R}}^{} f \circ \left( \phi_1^* * \phi_2^* \right) \left( x \right) dx. \label{eq:arrange-convex-inequality-main}
  \end{align}

\end{theorem}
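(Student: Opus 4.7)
The plan is to peel off the convex function, reduce to a Riesz--Sobolev style statement, and then reduce to indicator functions where a Kemperman-type sumset inequality can be invoked.

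First I would reduce to the special family $f(u) = (u - s)_+$. Since $f$ is convex with $f(0) = 0$, it admits the integral representation $f(u) = c u + \int_0^{\infty} (u - s)_+ \, d\nu(s)$ for some $c \in \mathbb{R}$ and a positive Radon measure $\nu$ on $(0, \infty)$ (using $\nu = df'$). The linear term $c u$ contributes equally on both sides of \eqref{eq:arrange-convex-inequality-main} because rearrangement preserves the $L^1$-norm and $\int_{G} \phi_1 * \phi_2 \, d\mu = \|\phi_1\|_1 \|\phi_2\|_1 = \int_{\mathbb{R}} \phi_1^* * \phi_2^* \, dx$, so it suffices to prove, for every $s > 0$,
\begin{equation*}
  \int_{G} (\phi_1 * \phi_2 - s)_+ \, d\mu \leq \int_{\mathbb{R}} (\phi_1^* * \phi_2^* - s)_+ \, dx.
\end{equation*}

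Next I would convert this into a Hardy--Littlewood/bathtub form. Using the rearrangement identity $\int_{G} (\phi - s)_+ \, d\mu = \int_{\mathbb{R}} (\phi^* - s)_+ \, dx$ and the fact that $\phi_1^* * \phi_2^*$ is already symmetric decreasing on $\mathbb{R}$ with the same total mass as $(\phi_1 * \phi_2)^*$, the truncated inequality is equivalent (via Hardy--Littlewood--P\'olya applied to symmetric decreasing functions with equal mass) to $\int_{-r}^{r} (\phi_1 * \phi_2)^*(x) \, dx \leq \int_{-r}^{r} \phi_1^* * \phi_2^*(x) \, dx$ for all $r > 0$. By the bathtub principle $\int_{-r}^{r} \psi^*(x) \, dx = \sup_{\mu(E) \leq 2r} \int_{E} \psi \, d\mu$, this is in turn equivalent to showing that for every measurable $E \subset G$ with $\mu(E) \leq 2r$,
\begin{equation*}
  \int \int \phi_1(h) \phi_2(k) \, \mathbf{1}_E(hk) \, dh \, dk \leq \int \int \phi_1^*(x) \phi_2^*(y) \, \mathbf{1}_{[-r,r]}(x+y) \, dx \, dy.
\end{equation*}
Both sides are now \emph{bilinear} in $(\phi_1, \phi_2)$, so the layer cake decompositions $\phi_i = \int_0^\infty \mathbf{1}_{\{\phi_i > t\}} \, dt$ and $\phi_i^* = \int_0^\infty \mathbf{1}_{\{\phi_i > t\}^*} \, dt$ reduce the problem to the indicator case: for measurable $A, B, E \subset G$ with $\mu(A) + \mu(B) \leq m(G)$ (automatic since level sets lie in the support) and $\mu(E) \leq 2r$,
\begin{equation*}
  | \{(h, k) \in A \times B : hk \in E \} | \leq | \{(x, y) \in A^* \times B^* : x + y \in [-r, r] \} |.
\end{equation*}

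The indicator-level inequality is the combinatorial heart of the proof and where I expect the main obstacle to lie. The right-hand side is the explicit triangular integral $\int_{-r}^{r} \mathbf{1}_{A^*} * \mathbf{1}_{B^*}(z) \, dz$, while the left-hand side equals $\int_{E} \mu(A \cap g B^{-1}) \, d\mu(g)$. Under the small-support hypothesis $\mu(A) + \mu(B) \leq m(G)$, Kemperman's theorem on essential sumsets in unimodular locally compact groups gives the lower bound $\mu^*(AB) \geq \mu(A) + \mu(B)$, which controls the support of $\mathbf{1}_A * \mathbf{1}_B$. The plan is to extract the full distributional inequality by slicing: parameterize the super-level sets $\{ g : (\mathbf{1}_A * \mathbf{1}_B)(g) > t \}$ via products of sub-level pieces of $A$ and $B$, apply Kemperman iteratively on each layer, and then integrate against $\mathbf{1}_E$ using the bathtub principle on $G$ to obtain the sharp comparison with the explicit triangular model on $\mathbb{R}$. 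Routine approximation and monotone convergence handle the passage back from indicators to general $\phi_i$ and back from hockey-sticks to the original $f$, using the integrability hypothesis \eqref{eq:arrange-convex-inequality-finite} to justify all interchanges.
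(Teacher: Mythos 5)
Your reduction chain (hockey sticks $\to$ Hardy--Littlewood--P\'olya majorization $\to$ bathtub principle $\to$ bilinearity $\to$ indicators) is a genuinely different middle route from the paper, which instead applies a superadditivity lemma for $f_t$ (Lemma \ref{lem:ft-integral}) directly to the layer-cake representation of the convolutions, with $t$ realized as a value $\phi_1^**\phi_2^*(x_0)$. Both routes bottom out at the same indicator-level statement, and your skeleton up to that point is sound. The problem is that you have not proved that statement, and it does not follow from Kemperman's sumset inequality by the slicing you describe. What you need is, for $\mu(A)+\mu(B)\leq m(G)$ and every $E$ with $\mu(E)\leq 2r$, the bound $\int_E 1_A*1_B\,d\mu\leq\int_{-r}^{r}1_{A^*}*1_{B^*}(x)\,dx$; equivalently, the distributional bound $\mu(\{1_A*1_B>t\})\geq\mu(A)+\mu(B)-2t$ for all $0\leq t<\min(\mu(A),\mu(B))$, or the integrated form $\|f_t\circ(1_A*1_B)\|_1\leq(\mu(A)-t)(\mu(B)-t)$. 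Kemperman \cite{MR202913} gives only the case $t=0$ (the support bound $\mu^*(AB)\geq\mu(A)+\mu(B)$); the refinement for $t>0$ is a Pollard-type theorem whose proof is not an iteration of Kemperman over layers --- it is precisely \cite[Theorem 1.1]{satomi2021inequality}, which the present paper invokes at the corresponding point in Section \ref{sec:proof-ft}. As written, the combinatorial heart of your argument is an unproved theorem.

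A second, smaller gap: your disposal of the linear term and of the passage back from hockey sticks to general $f$ is not routine in the stated generality. Theorem \ref{thm:arrange-convex-inequality} does not assume $\phi_1*\phi_2\in L^1$, only \eqref{eq:arrange-convex-inequality-finite}; if $\|\phi_1\|_1\|\phi_2\|_1=\infty$ and $f_+'(0)<0$ (or $f_+'(0)=-\infty$, or $f$ is discontinuous at $0$), the representation $f(u)=cu+\int_0^\infty(u-s)_+\,d\nu(s)$ splits both sides of \eqref{eq:arrange-convex-inequality-main} into terms of the form $-\infty+\infty$, and ``the linear terms agree'' is meaningless. The paper needs all of Sections \ref{sec:integrable-proof} and \ref{sec:rearrange-proof-general}, including the case-exhaustion Lemma \ref{lem:convex-convolution-all-case}, to show that the integrable case, the case $f\circ(\phi_1^**\phi_2^*)\geq 0$, and the case $f$ decreasing cover everything; some version of that analysis is unavoidable in your approach as well.
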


We note that $m (G) = \infty$ if and only if $G$ has no open compact subgroups (Remark \ref{rem:m-finite}).
In this case, Theorem \ref{thm:arrange-convex-inequality} holds even when $\mu ( \mathrm{supp} \; \phi_1) = \infty$ or $\mu ( \mathrm{supp} \; \phi_2) = \infty$.

Theorem \ref{thm:arrange-convex-inequality} was proved in the case of $G = \mathbb{R}$ or some cases of $f$ as Table \ref{tab:arrange-convex-inequality-compare}.
For the Brunn--Minkowski inequality and Kemperman's result, see the previous paper \cite[Corollary 1.3]{satomi2021inequality} of the author.
Now, we consider the case of $G = \mathbb{R}$.
For $f (y) := - y^p$ with $0 < p \leq 1$, Theorem \ref{thm:arrange-convex-inequality} was proved by Brascamp--Lieb \cite[Proposition 9]{MR412366} to improve the reverse Young's inequality (Fact \ref{fact:Beckner-Brascamp-Lieb} \ref{item:Beckner-Brascamp-Lieb-reverse}) and the Prékopa--Leindler inequality.
When $\phi_1$ and $\phi_2$ are integrable and $f$ is any convex function, Theorem \ref{thm:arrange-convex-inequality} was essentially proved by Wang--Madiman to bound Shannon's differential entropy of $\phi_1 * \phi_2$ from below \cite[Theorem 7.4]{MR3252379}.

\begin{table}
  \centering
  \renewcommand{\arraystretch}{1.4}
  \caption{Comparison of Theorem \ref{thm:arrange-convex-inequality} with some known results}
  \label{tab:arrange-convex-inequality-compare}
  \begin{tabular}{c||c|c}
    $f (y) \quad \backslash \quad G$ & $G = \mathbb{R}$                                                & $G$: unimodular locally compact group                        \\
    \hline \hline
    $\left\{
      \begin{aligned}
         & 0  &  & \text{if} \; y = 0 \\
         & -1 &  & \text{if} \; y > 0
      \end{aligned}
    \right.$                         & the Brunn--Minkowski inequality                                 & Kemperman \cite[Theorem 1.2]{MR202913}                       \\
    \hline
    $-y^p \; (0 < p \leq 1)$         & Brascamp--Lieb \cite[Proposition 9]{MR412366}                   & \multirow{3}{*}{Theorem \ref{thm:arrange-convex-inequality}} \\
    \cline{1-2}
    positive convex                  & Burchard \cite[Corollary 1]{MR2691260}                          &                                                              \\
    \cline{1-2}
    convex function                  & Wang--Madiman \cite[Theorem 7.4]{MR3252379}                     &                                                              \\
    \hline
    not convex                       & \multicolumn{2}{c}{not hold (Section \ref{sec:example-convex})}
  \end{tabular}
\end{table}

In the case of $G = \mathbb{R}^n$ for $n \geq 2$, the symmetric decreasing rearrangement $\phi^\star \colon \mathbb{R}^n \to \mathbb{R}_{\geq 0}$ defined in Wang--Madiman's paper differs from $\phi^*$ in Definition \ref{def:rearrange}.
Nevertheless, there is a one-to-one correspondence between $\phi^*$ and $\phi^\star$.
In this case, one can see that Wang--Madiman's result is sharper than Theorem \ref{thm:arrange-convex-inequality} by replacing $\phi_1$ and $\phi_2$ in Theorem \ref{thm:arrange-convex-inequality} with $\phi_1^\star$ and $\phi_2^\star$, respectively.

When $\phi_1$ and $\phi_2$ are characteristic functions on a unimodular locally compact group $G$, Theorem \ref{thm:arrange-convex-inequality} was previously obtained by the author \cite[Theorem 1.1]{satomi2021inequality}.
By using this result and the layer cake representation (Section \ref{sec:rearrange}), this result is generalized to any measurable functions $\phi_1$ and $\phi_2$ as in Theorem \ref{thm:arrange-convex-inequality}.

By Theorem \ref{thm:arrange-convex-inequality}, the integral for the convolution on $G$ is bounded by the integral on $\mathbb{R}$.
Thus, some inequalities on $G$ can be obtained from inequalities on $\mathbb{R}$.
For example, we bound the optimal constant of Young's inequality from above and the reverse Young's inequality from below (Corollary \ref{cor:Young-Fournier}).
Let
\begin{align*}
  B(p) := \frac{p^{1/2p}}{|p'|^{1/2p'}}, \quad
  \frac{1}{p} + \frac{1}{p'} = 1
\end{align*}
and $\| \cdot \|_p$ denotes the $L^p$-norm on $G$ for $p \in \mathbb{R}_{> 0}$.
For $p_1,p_2 > 0$ with
\begin{align}
  \frac{1}{p_1} + \frac{1}{p_2} > 1, \label{eq:p1p2-small}
\end{align}
we let the real number $q (p_1,p_2) > 0$ such that
\begin{align}
  \frac{1}{q (p_1,p_2)} = \frac{1}{p_1} + \frac{1}{p_2} - 1. \label{eq:Young-p-relate}
\end{align}
We note that $q (p_1,p_2) > 1$ holds for any $p_1,p_2 > 1$ with \eqref{eq:p1p2-small}.
We have \eqref{eq:p1p2-small} and $q (p_1,p_2) < 1$ for any $0 < p_1,p_2 < 1$.
Theorem \ref{thm:arrange-convex-inequality} shows the following corollary.

\begin{corollary}
  \label{cor:Young-Fournier}

  Suppose measurable functions $\phi_1 , \phi_2 \colon G \to \mathbb{R}_{\geq 0}$ on a unimodular locally compact group $G$ satisfy \eqref{eq:supp-small}.

  \begin{enumerate}
    \item \label{item:Young-Fournier-obverse}
          (a stronger version of Young's inequality)
          We have
          \begin{align*}
            \| \phi_1 * \phi_2 \|_{q (p_1,p_2)}
            \leq \frac{B(p_1) B(p_2)}{B \circ q (p_1,p_2)} \| \phi_1 \|_{p_1} \| \phi_2 \|_{p_2}
          \end{align*}
          for any $p_1,p_2 > 1$ with \eqref{eq:p1p2-small}.

    \item \label{item:Young-Fournier-reverse}
          (a stronger version of the reverse Young's inequality)
          We have
          \begin{align*}
            \| \phi_1 * \phi_2 \|_{q (p_1,p_2)}
            \geq \frac{B(p_1) B(p_2)}{B \circ q (p_1,p_2)} \| \phi_1 \|_{p_1} \| \phi_2 \|_{p_2}
          \end{align*}
          for any $0 < p_1,p_2 < 1$.

  \end{enumerate}

\end{corollary}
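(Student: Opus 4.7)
The plan is to reduce each inequality on $G$ to the corresponding sharp inequality on $\mathbb{R}$ via Theorem \ref{thm:arrange-convex-inequality}, combined with the classical sharp (reverse) Young inequalities of Beckner and Brascamp--Lieb on $\mathbb{R}$ and the equimeasurability property $\|\phi_i^*\|_{p_i} = \|\phi_i\|_{p_i}$, which is a standard consequence of the layer cake representation of $\phi_i$.

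For part \ref{item:Young-Fournier-obverse}, I would first assume without loss of generality that $\|\phi_i\|_{p_i} < \infty$. Since $p_1, p_2 > 1$ together with \eqref{eq:p1p2-small} give $q := q(p_1,p_2) > 1$, the function $f(y) := y^q$ is convex on $\mathbb{R}_{\geq 0}$ with $f(0) = 0$. The ordinary (non-sharp) Young inequality on $G$ together with its $\mathbb{R}$-counterpart applied to $\phi_1^*, \phi_2^*$ verifies the a.e.\ finiteness hypothesis \eqref{eq:arrange-convex-inequality-finite}. Theorem \ref{thm:arrange-convex-inequality} then yields
\begin{align*}
  \int_G (\phi_1 * \phi_2)^q(g)\, dg \leq \int_{\mathbb{R}} (\phi_1^* * \phi_2^*)^q(x)\, dx,
\end{align*}
so taking the $q$-th root gives $\|\phi_1 * \phi_2\|_q \leq \|\phi_1^* * \phi_2^*\|_q$. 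Applying the sharp Young inequality on $\mathbb{R}$ to the nonnegative functions $\phi_1^*, \phi_2^*$ and using equimeasurability then yields the claimed constant $B(p_1)B(p_2)/B(q)$.

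Part \ref{item:Young-Fournier-reverse} goes by the same strategy but must accommodate the fact that $y \mapsto y^q$ is \emph{concave} for $0 < q < 1$. With $0 < p_1, p_2 < 1$ one has $q := q(p_1,p_2) \in (0,1)$, so I would instead use $f(y) := -y^q$, which is convex on $\mathbb{R}_{\geq 0}$ with $f(0) = 0$. Applying Theorem \ref{thm:arrange-convex-inequality} now reverses the integral inequality:
\begin{align*}
  \int_G (\phi_1 * \phi_2)^q(g)\, dg \geq \int_{\mathbb{R}} (\phi_1^* * \phi_2^*)^q(x)\, dx.
\end{align*}
Since $1/q > 0$ and both sides are nonnegative, raising to the $1/q$-th power preserves the direction and gives $\|\phi_1 * \phi_2\|_q \geq \|\phi_1^* * \phi_2^*\|_q$. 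Combining with the sharp reverse Young inequality on $\mathbb{R}$ applied to $\phi_1^*, \phi_2^*$, together with equimeasurability, delivers the desired lower bound.

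The main obstacle I expect is not the algebraic transfer, which is routine once Theorem \ref{thm:arrange-convex-inequality} is in hand, but the bookkeeping needed to verify \eqref{eq:arrange-convex-inequality-finite} in every case. In the obverse regime this follows from standard $L^p$-estimates, but in the reverse regime $0 < p_1, p_2 < 1$ the convolution need not be a priori finite, so one probably approximates $\phi_1, \phi_2$ by bounded functions of compact support, applies the argument to the approximants, and concludes by monotone convergence.
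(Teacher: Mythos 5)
Your proposal is correct and follows essentially the same route as the paper: apply Theorem \ref{thm:arrange-convex-inequality} with $f(y)=y^{q}$ (resp.\ $f(y)=-y^{q}$) to compare $\|\phi_1*\phi_2\|_{q}$ with $\|\phi_1^**\phi_2^*\|_{q}$, then invoke Fact \ref{fact:Beckner-Brascamp-Lieb} on $\mathbb{R}$ together with $\|\phi_i^*\|_{p_i}=\|\phi_i\|_{p_i}$. Your additional remarks on verifying \eqref{eq:arrange-convex-inequality-finite} are sensible but go beyond what the paper's proof spells out.
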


When $m (G) = \infty$, Corollary \ref{cor:Young-Fournier} can be regarded as an extension of Fournier's result \cite[Theorem 1]{MR461034} (Subsection \ref{subsec:Young-function-2}).
In this case, we have an upper bound of the optimal constant of the Hausdorff--Young inequality (Corollary \ref{cor:Hausdorff-Young-mG-infinite}) by using Corollary \ref{cor:Young-Fournier} \ref{item:Young-Fournier-obverse} and a result of Klein--Russo (Fact \ref{fact:Young-Hausdorff-Young-relate}).
It was proved that Corollary \ref{cor:Young-Fournier} is best possible for $G = \mathbb{R}$ (Fact \ref{fact:Beckner-Brascamp-Lieb}).
Although Corollary \ref{cor:Young-Fournier} was already proved in particular cases of $G$, Corollary \ref{cor:Young-Fournier} includes new cases such as compact groups or semisimple Lie groups as far as the author knows (Section \ref{sec:Young}).

Here is the organization of this paper.
In Section \ref{sec:Young}, we will compare Corollary \ref{cor:Young-Fournier} with some known results and show Corollary \ref{cor:Young-Fournier} by using Theorem \ref{thm:arrange-convex-inequality}.
In Section \ref{sec:Hausdorff-Young}, we will bound the optimal constant of the Hausdorff--Young inequality from above by using Corollary \ref{cor:Young-Fournier} \ref{item:Young-Fournier-obverse}.
In Section \ref{sec:example-convex}, we will see that the function $f$ in Theorem \ref{thm:arrange-convex-inequality} must be a convex function for $G = \mathbb{R}$.
In Section \ref{sec:rearrange}, we will summarize some properties of the rearrangement to show Theorem \ref{thm:arrange-convex-inequality}.
Sections \ref{sec:proof-ft}-\ref{sec:rearrange-proof-general} will be devoted to the proof of Theorem \ref{thm:arrange-convex-inequality}.
In Section \ref{sec:proof-ft}, we will show Theorem \ref{thm:arrange-convex-inequality} for certain convex functions $f = f_t$.
In Section \ref{sec:integrable-proof}, we will show Theorem \ref{thm:arrange-convex-inequality} in the case where $\phi_1 * \phi_2$ is integrable.
In Section \ref{sec:rearrange-proof-general}, we will complete the proof of Theorem \ref{thm:arrange-convex-inequality}.

\section{The optimal constant of (the reverse) Young's inequality}
\label{sec:Young}

In this section, we compare Corollary \ref{cor:Young-Fournier} with some known results, and show Corollary \ref{cor:Young-Fournier} by using Theorem \ref{thm:arrange-convex-inequality}.
Here is a summary of this section.
We define the optimal constants of Young's inequality (Fact \ref{fact:Young}) and the reverse Young's inequality (Theorem \ref{thm:reverse-Young}) as $\tilde{Y}_O (P,G)$ and $\tilde{Y}_R (P,G)$, respectively, for $P = (p_1 , p_2) \in \mathbb{R}_{> 0}^2$.
Similarly, we define these optimal constants as $Y_O (P,G)$ and $Y_R (P,G)$ under the assumption \eqref{eq:supp-small}.
We will define $\tilde{Y}_O (P,G)$, $\tilde{Y}_R (P,G)$, $Y_O (P,G)$ and $Y_R (P,G)$ precisely in Definition \ref{def:Young-optimal} later.
Similarly, we denote by $H(p,G)$ the optimal constant of the Hausdorff--Young inequality for $1 < p \leq 2$ (Definition \ref{def:Russo}).
Some known results about the optimal constants are summarized in Table \ref{tab:optimal-constant}.

\begin{table}[H]
  \centering
  \caption{Some known results about the optimal constants}
  \label{tab:optimal-constant}
  \renewcommand{\arraystretch}{1.4}
  \begin{tabular}{c|c|c}
    $m (G) < \infty$                                                        & $G = \mathbb{R}^n$                                                                       & $m (G) = \infty$                                                     \\
    \hline \hline
    $Y_O (P,G) \leq C (P)$                                                  &                                                                                          &                                                                      \\
    Corollary \ref{cor:Young-Fournier} \ref{item:Young-Fournier-obverse}    & $Y_O (P,G) = \tilde{Y}_O (P,G) = C(P)^n$                                                 & $Y_O (P,G) = \tilde{Y}_O (P,G) \leq C (P)$                           \\
    \cline{1-1}
    $\tilde{Y}_O (P,G) = 1$                                                 & Beckner (Fact \ref{fact:Beckner-Brascamp-Lieb} \ref{item:Beckner-Brascamp-Lieb-obverse}) & Corollary \ref{cor:Young-Fournier} \ref{item:Young-Fournier-obverse} \\
    Fournier \cite[Theorem 3]{MR461034}                                     &                                                                                          &                                                                      \\
    \hline
    $Y_R (P,G) \geq C (P)$                                                  &                                                                                          &                                                                      \\
    Corollary \ref{cor:Young-Fournier} \ref{item:Young-Fournier-reverse}    & $Y_R (P,G) = \tilde{Y}_R (P,G) = C(P)^n$                                                 & $Y_R (P,G) = \tilde{Y}_R (P,G) \geq C (P)$                           \\
    \cline{1-1}
    $\tilde{Y}_R (P,G) = 1$                                                 & Brascamp--Lieb \cite{MR412366} ($n = 1$),                                                & Corollary \ref{cor:Young-Fournier} \ref{item:Young-Fournier-reverse} \\
    Theorem \ref{thm:reverse-Young}, Remark \ref{rem:Y-tilde-equality-hold} & Barthe (Fact \ref{fact:Beckner-Brascamp-Lieb} \ref{item:Beckner-Brascamp-Lieb-reverse})  &                                                                      \\
    \hline
                                                                            &                                                                                          & $H (p,G) \leq B (p)$ if $p' \in 2 \mathbb{Z}$                        \\
    $H (p,G) = 1$                                                           & $H (p,G) = B (p)^n$                                                                      & Russo \cite[Theorem 3 (b)]{MR515223},                                \\
    Russo \cite[Theorem 1]{MR435731}                                        & Babenko \cite{MR0138939} ($p' \in 2 \mathbb{Z}$),                                        & Corollary \ref{cor:Hausdorff-Young-mG-infinite}                      \\
    \cline{3-3}
                                                                            & Beckner \cite[Theorem 1]{MR385456}                                                       & $H (p,G) < 1$                                                        \\
                                                                            &                                                                                          & Fournier \cite[Theorem 2]{MR461034}
  \end{tabular}
\end{table}

Corollary \ref{cor:Young-Fournier} implies
\begin{align*}
  Y_O (P,G)
  \leq C (P), \quad
  Y_R (P,G)
  \geq C (P), \quad
  C (P)
  := \frac{B(p_1) B(p_2)}{B \circ q (p_1,p_2)}.
\end{align*}

When $m (G) = \infty$, we have $Y_O (P,G) = \tilde{Y}_O (P,G)$ and $Y_R (P,G) = \tilde{Y}_R (P,G)$ by definition.
For $G = \mathbb{R}^n$, the constants $Y_O (P,G)$ and $Y_R (P,G)$ were given explicitly (Fact \ref{fact:Beckner-Brascamp-Lieb}).
Corollary \ref{cor:Young-Fournier} can be proved in some cases of $G$ by using a result of Cowling--Martini--M\"{u}ller--Parcet (Fact \ref{fact:Cowling-Martini-Muller-Parcet}).
On the other hand, when $G$ is a semisimple Lie group, Corollary \ref{cor:Young-Fournier} has not been known as far as the author knows.

When $m (G) < \infty$, we have $\tilde{Y}_O (P,G) = 1$ and $\tilde{Y}_R (P,G) = 1$ (Remark \ref{rem:Y-tilde-equality-hold}).
So, we consider $Y_O (P,G)$ and $Y_R (P,G)$ instead of $\tilde{Y}_O (P,G)$ and $\tilde{Y}_R (P,G)$.
When $G$ is a Lie group and both of $\mathrm{supp} \; \phi_1$ and $\mathrm{supp} \; \phi_2$ are sufficiently small (even smaller than ones in (1.1)), the optimal constant is determined only by $\dim G$ (Fact \ref{fact:Bennett-Bez-Buschenhenke-Cowling-Flock}).
This fact gives a stronger upper bound than Corollary \ref{cor:Young-Fournier}.
On the other hand, there are more functions which can be applied to Corollary \ref{cor:Young-Fournier}.

In Subsection \ref{subsec:Young-optimal-constant}, we will define the optimal constants $\tilde{Y}_O (P,G)$, $\tilde{Y}_R (P,G)$, $Y_O (P,G)$, and $Y_R (P,G)$.
In Subsection \ref{subsec:Young-function-2}, we will compare Corollary \ref{cor:Young-Fournier} with some known results for $N = 2$.
In Subsection \ref{subsec:Young-function-more-2}, we will see that similar results are valid for any $N$.
In Subsection \ref{subsec:Young-Fourier-transform is equal to proof}, we will show Corollary \ref{cor:Young-Fournier} by using Theorem \ref{thm:arrange-convex-inequality}.

\subsection{The definition of the optimal constants}
\label{subsec:Young-optimal-constant}

In this subsection, we define the optimal constant of (the reverse) Young's inequality.

\begin{definition}
  \label{def:Young-optimal}

  Let $N \in \mathbb{Z}_{\geq 1}$.
  For $P := (p_1, p_2, \cdots , p_N ) \in \mathbb{R}_{> 0}^N$ with
  \begin{align*}
    \sum_{k = 1}^N \frac{1}{p_k} > N - 1,
  \end{align*}
  we let the real number $q (P) > 0$ such that
  \begin{align*}
    \frac{1}{q ( P )} = 1 - N + \sum_{k = 1}^N \frac{1}{p_k}.
  \end{align*}

  \begin{enumerate}
    \item
          When $p_1, p_2, \cdots , p_N , q (P) \neq 1$, we define
          \begin{align*}
            C (P) := \frac{1}{B \circ q (P)} \prod_{k = 1}^N B (p_k).
          \end{align*}

    \item \label{item:Young-optimal-tilde}
          For a unimodular locally compact group $G$, we define
          \begin{align*}
            \tilde{\mathcal{B}} := \{ ( \phi_1 , \phi_2 , \cdots , \phi_N ) \mid \phi_1, \phi_2, \cdots , \phi_N \colon G \to \mathbb{R}_{\geq 0} , \; \| \phi_1 \|_{p_1} = \| \phi_2 \|_{p_2} = \cdots = \| \phi_N \|_{p_N} = 1 \}
          \end{align*}
          and
          \begin{align*}
            \tilde{Y}_O (P, G) & := \sup \{ \| \phi_1 * \phi_2 * \cdots * \phi_N \|_{q (P)} \mid ( \phi_1 , \phi_2 , \cdots , \phi_N ) \in \tilde{\mathcal{B}} \}, \\
            \tilde{Y}_R (P, G) & := \inf \{ \| \phi_1 * \phi_2 * \cdots * \phi_N \|_{q (P)} \mid ( \phi_1 , \phi_2 , \cdots , \phi_N ) \in \tilde{\mathcal{B}} \}.
          \end{align*}

    \item
          Let $G$ and $\tilde{\mathcal{B}}$ be as in \ref{item:Young-optimal-tilde}.
          For $N = 2$, we define $\mathcal{B}$ as the set of all elements of $\tilde{\mathcal{B}}$ with \eqref{eq:supp-small} and
          \begin{align*}
            Y_O (P, G) & := \sup \{ \| \phi_1 * \phi_2 \|_{q (P)} \mid ( \phi_1 , \phi_2 ) \in \mathcal{B} \}, \\
            Y_R (P, G) & := \inf \{ \| \phi_1 * \phi_2 \|_{q (P)} \mid ( \phi_1 , \phi_2 ) \in \mathcal{B} \}.
          \end{align*}

  \end{enumerate}

\end{definition}

We note that $C(P) < 1 < q(P)$ holds for any $p_1 , p_2, \cdots , p_N > 1$ and $C(P) > 1 > q (P)$ holds for any $0 < p_1 , p_2, \cdots , p_N < 1$.
When $m (G) = \infty$, the assumption \eqref{eq:supp-small} always holds.
Thus, for any unimodular locally compact group $G$, we have
\begin{align}
  Y_O (P , G) \leq \tilde{Y}_O (P , G), \quad
  Y_R (P , G) \geq \tilde{Y}_R (P , G) \label{eq:Y-tilde-Y-inequality}
\end{align}
and these equalities hold if $m (G) = \infty$.
Actually, these equalities hold if and only if $m (G) = \infty$ (Remark \ref{rem:Y-tilde-equality-hold}).

\begin{remark}[\cite{satomi2021inequality}, Remark 2.4 (3)]
  \label{rem:m-finite}

  The following three conditions \ref{item:m-finite-finite}, \ref{item:m-finite-open-compact}, and \ref{item:m-finite-G0-compact} are equivalent for any locally compact group $G$.

  \begin{enumerate}
    \item \label{item:m-finite-finite}
          One has $m (G) < \infty$.

    \item \label{item:m-finite-open-compact}
          The locally compact group $G$ has an open compact subgroup.

    \item \label{item:m-finite-G0-compact}
          The identity component $G_0$ of $G$ is compact.

  \end{enumerate}

  If the equivalent conditions \ref{item:m-finite-finite}, \ref{item:m-finite-open-compact}, and \ref{item:m-finite-G0-compact} are satisfied, then we have $m (G) = \mu (G_0)$.
  Thus, \eqref{eq:supp-small} holds if and only if at least one of the following two conditions \ref{item:m-finite-vol-G0} and \ref{item:m-finite-G0-non-compact} are satisfied.

  \begin{enumerate}[label=(\alph*)]
    \item \label{item:m-finite-vol-G0}
          One has $\mu ( \mathrm{supp} \; \phi_1 ) + \mu ( \mathrm{supp} \; \phi_2 ) \leq \mu (G_0)$.

    \item \label{item:m-finite-G0-non-compact}
          The identity component $G_0$ is not compact.

  \end{enumerate}

  In particular, if $G_0$ is open (e.g., $G$ is a Lie group), then \eqref{eq:supp-small} and \ref{item:m-finite-vol-G0} are equivalent.

\end{remark}

\subsection{The case of two functions}
\label{subsec:Young-function-2}

In the case of $N = 1$, we have $q (p_1) = p_1$ and hence
\begin{align*}
  Y_O (p_1 , G)
  = \tilde{Y}_O (p_1 , G)
  = Y_R (p_1 , G)
  = \tilde{Y}_R (p_1 , G)
  = C (p_1)
  = 1.
\end{align*}
Thus, the case of $N = 2$ is the simplest example such that these constants are not trivial.
In this subsection, we consider the case of $N = 2$ in Definition \ref{def:Young-optimal}.
We have the following statements.

\begin{fact}[Young's inequality \cite{MR0005741}]
  \label{fact:Young}

  Let $G$, $P = (p_1 , p_2) \in \mathbb{R}_{\geq 1}^2$, and $\tilde{Y}_O (P , G)$ be as in Definition \ref{def:Young-optimal} for $N = 2$.
  Then we have $\tilde{Y}_O (P , G) \leq 1$.

\end{fact}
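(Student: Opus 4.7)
The statement is the classical Young inequality with constant $1$ on a unimodular locally compact group. I would prove it by a direct three-exponent Hölder argument, without using any of the main results of the paper. After disposing of the endpoints (if $p_1=1$ the inequality reduces to $\|\phi_1*\phi_2\|_{p_2}\le\|\phi_1\|_1\|\phi_2\|_{p_2}$, which is Minkowski's integral inequality; the case $p_2=1$ is symmetric, and $p_1=p_2=q=1$ is trivial), I may assume $p_1,p_2>1$ and therefore $q:=q(P)\in(1,\infty)$.

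The key step is to decompose the integrand of the convolution, for each fixed $g\in G$, as a product of three factors
\begin{align*}
\phi_1(h)\,\phi_2(h^{-1}g)
=\bigl[\phi_1(h)^{p_1/q}\phi_2(h^{-1}g)^{p_2/q}\bigr]\cdot\phi_1(h)^{\,1-p_1/q}\cdot\phi_2(h^{-1}g)^{\,1-p_2/q},
\end{align*}
and apply Hölder's inequality with exponents $(q,r,s)$ determined by $1/r=1/p_1-1/q$ and $1/s=1/p_2-1/q$. The defining identity $1/q=1/p_1+1/p_2-1$ guarantees $1/q+1/r+1/s=1$ and also that both $r,s>0$ (in fact $\ge 1$). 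The second factor integrates to $\|\phi_1\|_{p_1}^{p_1/r}=\|\phi_1\|_{p_1}^{1-p_1/q}$, and for the third factor I would use unimodularity to change variables $k=h^{-1}g$, obtaining the $g$-independent constant $\|\phi_2\|_{p_2}^{1-p_2/q}$.

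At this point one has the pointwise bound
\begin{align*}
\phi_1*\phi_2(g)\le\Bigl(\int_G\phi_1(h)^{p_1}\phi_2(h^{-1}g)^{p_2}\,dh\Bigr)^{1/q}\|\phi_1\|_{p_1}^{1-p_1/q}\|\phi_2\|_{p_2}^{1-p_2/q}.
\end{align*}
Raising to the $q$-th power, integrating over $g$, and interchanging the order of integration by Fubini (the second use of unimodularity, to evaluate $\int_G\phi_2(h^{-1}g)^{p_2}\,dg=\|\phi_2\|_{p_2}^{p_2}$), the double integral factors as $\|\phi_1\|_{p_1}^{p_1}\|\phi_2\|_{p_2}^{p_2}$. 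Combining the exponents $(1-p_i/q)\cdot q+p_i=q$ yields exactly $\|\phi_1*\phi_2\|_q^q\le\|\phi_1\|_{p_1}^q\|\phi_2\|_{p_2}^q$, which is $\tilde Y_O(P,G)\le 1$.

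The whole proof is algorithmic once the correct decomposition is chosen, so there is no genuinely hard step; the only delicate points are finding the splitting of exponents that makes the second and third Hölder factors collapse to $\|\phi_i\|_{p_i}$-powers, and verifying that Fubini and the translation invariance one uses are legitimate — neither of which should cause trouble since the hypothesis $\|\phi_i\|_{p_i}=1$ (finite) and nonnegativity of $\phi_i$ make all integrals well defined in $[0,\infty]$.
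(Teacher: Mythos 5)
Your proof is correct. Note first that the paper does not actually prove Fact \ref{fact:Young} at all: it is stated as a known result and attributed to Weil \cite{MR0005741}, so there is no in-paper argument to compare against line by line. The closest thing is the paper's proof of Theorem \ref{thm:reverse-Young}, the reverse inequality, and your argument is precisely its mirror image: your splitting $\phi_1(h)\phi_2(h^{-1}g)=\bigl[\phi_1(h)^{p_1}\phi_2(h^{-1}g)^{p_2}\bigr]^{1/q}\phi_1(h)^{1-p_1/q}\phi_2(h^{-1}g)^{1-p_2/q}$ is identical to the paper's factorization $\phi_1(g)^{p_1/p_2'}\phi_2(g^{-1}g')^{p_2/p_1'}\bigl(\phi_1(g)^{p_1}\phi_2(g^{-1}g')^{p_2}\bigr)^{1/p}$, since $1-p_1/q=p_1/p_2'$ and your auxiliary exponents $(r,s)$ are exactly $(p_2',p_1')$, with the same relation $1/p_1'+1/p_2'+1/q=1$; the only difference is that for $p_i>1$ all three exponents are $\ge 1$ and H\"older goes in the usual direction, whereas for $p_i<1$ the paper uses the reverse H\"older inequality. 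Your handling of the endpoints ($p_i=1$ via Minkowski's integral inequality), the verification that $r,s\ge 1$, and the two uses of unimodularity (inversion/translation invariance for the third factor and for the Tonelli step, which is legitimate since the integrands are nonnegative) are all in order. So the proposal is a complete and standard proof of the cited fact, consistent in spirit with the paper's own treatment of the dual statement.
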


\begin{theorem}[The reverse Young's inequality]
  \label{thm:reverse-Young}

  Let $G$, $P = (p_1 , p_2) \in (0,1]^2$, and $\tilde{Y}_R (P , G)$ be as in Definition \ref{def:Young-optimal} for $N = 2$.
  Then we have $\tilde{Y}_R (P , G) \geq 1$.

\end{theorem}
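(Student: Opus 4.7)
The strategy is to imitate the classical three-line proof of Young's inequality on $G$, replacing each application of H\"older's inequality by its reverse form; this avoids any appeal to Theorem \ref{thm:arrange-convex-inequality} (whose support condition \eqref{eq:supp-small} is not assumed here) and works on any unimodular locally compact group. Since $p_1, p_2 \in (0, 1]$ the definition of $q := q(P)$ gives $q \in (0, 1]$ with $q \leq \min(p_1, p_2)$. The degenerate case in which some $p_k = 1$ forces $q = p_{3-k}$; it reduces to $\|\phi_1 * \phi_2\|_q \geq \|\phi_1\|_1 \|\phi_2\|_q$, which follows from Minkowski's integral inequality in its reverse (concave) form valid for $0 < q \leq 1$ and nonnegative functions applied to $(\phi_1 * \phi_2)(y) = \int_G \phi_1(x)\phi_2(x^{-1}y)\,dx$, together with translation invariance of the $L^q$-norm on $G$.

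In the principal case $0 < p_1, p_2 < 1$, I would decompose
\begin{align*}
\phi_1(x)\phi_2(x^{-1}y) = \bigl[\phi_1(x)^{p_1}\phi_2(x^{-1}y)^{p_2}\bigr]^{1/q}\,\phi_1(x)^{1 - p_1/q}\,\phi_2(x^{-1}y)^{1 - p_2/q}
\end{align*}
and set $s_k := p_k q/(q - p_k)$ for $k = 1, 2$; the identity $1/q = 1/p_1 + 1/p_2 - 1$ gives $1/q + 1/s_1 + 1/s_2 = 1$ with $s_1, s_2 < 0$. A three-factor reverse H\"older inequality with exponents $(q, s_1, s_2)$ then produces, for almost every $y$, the pointwise lower bound
\begin{align*}
(\phi_1 * \phi_2)(y) \geq \bigl((\phi_1^{p_1} * \phi_2^{p_2})(y)\bigr)^{1/q}\,\|\phi_1\|_{p_1}^{1 - p_1/q}\,\|\phi_2\|_{p_2}^{1 - p_2/q}.
\end{align*}
Raising to the $q$-th power, integrating over $y$, and using Fubini together with the identity $\int_G(\phi_1^{p_1} * \phi_2^{p_2})\,dy = \|\phi_1\|_{p_1}^{p_1}\|\phi_2\|_{p_2}^{p_2}$ (valid on unimodular groups for nonnegative integrands), a final $q$-th root gives $\|\phi_1 * \phi_2\|_q \geq \|\phi_1\|_{p_1}\|\phi_2\|_{p_2}$, hence $\tilde Y_R(P, G) \geq 1$.

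The main obstacle is the justification of the three-factor reverse H\"older step, since with one positive exponent $q \in (0, 1)$ and two negative exponents $s_1, s_2$ it is not a standard off-the-shelf statement. I would derive it by first applying the two-factor reverse H\"older to $\int f \cdot (gh)\,dx$, pairing $f$ with $gh$ at conjugate exponents $q$ and $q' = q/(q-1) < 0$, and then bounding $\|gh\|_{q'}$ from below by an ordinary H\"older estimate on $\int g^{q'}h^{q'}\,dx$ with the exponents $\alpha := s_1/q'$ and $\beta := s_2/q'$; these turn out to be real numbers in $[1, \infty]$ satisfying $1/\alpha + 1/\beta = 1$, precisely because $p_1, p_2 \leq 1$. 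A secondary technicality is the bookkeeping around points where $\phi_k$ vanishes—reverse H\"older genuinely requires positivity—which I would resolve by a standard monotone truncation of each $\phi_k$ on a $\sigma$-finite exhaustion of its support and passing to the limit via monotone convergence.
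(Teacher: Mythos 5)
Your proposal is correct and follows essentially the same route as the paper: your three-factor decomposition of $\phi_1(x)\phi_2(x^{-1}y)$ with exponents $(q, s_1, s_2)$ is algebraically identical to the paper's splitting $\phi_1 (g)^{p_1/p_2'} \phi_2 (g^{-1} g')^{p_2/p_1'} ( \phi_1 (g)^{p_1}\phi_2 (g^{-1} g')^{p_2} )^{1/p}$ (indeed $s_1 = p_2'$ and $s_2 = p_1'$), followed by the same pointwise reverse H\"older bound, $q$-th power, and Fubini. The only differences are expository: you spell out the reduction of the three-factor reverse H\"older to the two-factor versions and treat the endpoint $p_k = 1$ and vanishing-set issues explicitly, which the paper leaves implicit.
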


In the case of $G = \mathbb{R}$ or $G= S^1$, Theorem \ref{thm:reverse-Young} was proved by Leindler \cite[Theorems 1 and 2]{MR0430188}.

\begin{proof}[Proof of Theorem \ref{thm:reverse-Young}]

  It suffices to show $\| \phi_1 * \phi_2 \|_p^p \geq 1$ for any measurable functions $\phi_1 , \phi_2 \colon G \to \mathbb{R}_{\geq 0}$ with $\| \phi_1 \|_{p_1} = \| \phi_2 \|_{p_2} = 1$.
  Since
  \begin{gather*}
    \frac{1}{p_1'} + \frac{1}{p_2'} + \frac{1}{p} = 1 , \quad
    \phi_1 (g) \phi_2 (g^{-1} g')
    = \phi_1 (g)^{p_1/p_2'} \phi_2 (g^{-1} g')^{p_2/p_1'} ( \phi_1 (g)^{p_1}\phi_2 (g^{-1} g')^{p_2} )^{1/p}
  \end{gather*}
  for any $g , g' \in G$ by \eqref{eq:Young-p-relate}, we have
  \begin{align*}
    \phi_1 * \phi_2 (g')
    \geq \| \phi_1 \|_{p_1}^{p_1 / p_2'} \| \phi_2 \|_{p_2}^{p_2 / p_1'} \left( \int_{G}^{} \phi_1 (g)^{p_1} \phi_2 (g^{-1} g')^{p_2} dg  \right)^{1/p}
    = \left( \int_{G}^{} \phi_1 (g)^{p_1} \phi_2 (g^{-1} g')^{p_2} dg  \right)^{1/p}
  \end{align*}
  by H\"{o}lder's inequality.
  Thus,
  \begin{align*}
    \| \phi_1 * \phi_2 \|_p^p
    \geq \int_{G}^{} \int_{G}^{} \phi_1 (g)^{p_1} \phi_2 (g^{-1} g')^{p_2} dg dg'
    = \| \phi_1 \|_{p_1}^{p_1} \| \phi_2 \|_{p_2}^{p_2}
    = 1
  \end{align*}
  is obtained.
\end{proof}

\begin{remark}
  \label{rem:Y-tilde-equality-hold}

  When $m (G) < \infty$, the equalities in Fact \ref{fact:Young} and Theorem \ref{thm:reverse-Young} hold.
  In fact, there is an open compact subgroup $G' \subset G$ by Remark \ref{rem:m-finite} in this case.
  Thus, the equalities in Fact \ref{fact:Young} and Theorem \ref{thm:reverse-Young} hold for $\phi_1 = \phi_2 = 1_{G'}$, where $1_{G'}$ is a characteristic function of $G'$.
  On the other hand, we have
  \begin{align*}
    Y_O (P , G) \leq C(P) < 1 = \tilde{Y}_O (P,G)
  \end{align*}
  for $p_1 , p_2 > 1$ by Corollary \ref{cor:Young-Fournier} \ref{item:Young-Fournier-obverse} and similarly
  \begin{align*}
    Y_R (P , G) \geq C(P) > 1 = \tilde{Y}_R (P,G)
  \end{align*}
  for $p_1 , p_2 < 1$ by Corollary \ref{cor:Young-Fournier} \ref{item:Young-Fournier-reverse}.
  That is, the equality of \eqref{eq:Y-tilde-Y-inequality} does not hold when $m (G) < \infty$.
  Thus, the equality of \eqref{eq:Y-tilde-Y-inequality} holds if and only if $m (G) = \infty$.

\end{remark}

We fix $P = (p_1,p_2)$ with $p_1,p_2 >1$.
Fournier proved that there is a real number $c (P) < 1$, which is independent in $G$, such that $Y_O (P , G) \leq c (P)$ for any $G$ with $m(G) = \infty$ \cite[Theorem 1]{MR461034}.
Corollary \ref{cor:Young-Fournier} \ref{item:Young-Fournier-obverse} implies that The minimum of $c (P)$ satisfying this claim is $C (P)$.

For $G = \mathbb{R}^n$, Beckner and Brascamp--Lieb explicitly determined the values of $Y_O (P , G)$ and $Y_R (P , G)$, respectively, as follows.

\begin{fact}
  \label{fact:Beckner-Brascamp-Lieb}

  Let $G$, $P = (p_1, p_2)$, $Y_O (P, G)$, $Y_R (P , G)$, and $C(P)$ be as in Definition \ref{def:Young-optimal} for $N = 2$, and $n \in \mathbb{Z}_{\geq 1}$.

  \begin{enumerate}
    \item \label{item:Beckner-Brascamp-Lieb-obverse}
          (Beckner, \cite[Theorem 3]{MR385456})
          If $p_1,p_2 > 1$, then we have $Y_O ( P , \mathbb{R}^n ) = C(P)^n$.

    \item \label{item:Beckner-Brascamp-Lieb-reverse}
          (Barthe, \cite[Theorem 1]{MR1616143})
          If $p_1,p_2 < 1$, then we have $Y_R (P , \mathbb{R}^n) = C(P)^n$.

  \end{enumerate}

\end{fact}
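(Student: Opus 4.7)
The plan is to treat both parts via the same principle: identify the extremizers as Gaussian functions, compute the resulting constant on Gaussians, and exploit the tensor product structure to pass from dimension $1$ to dimension $n$. In each case, one inequality (the matching bound for the supremum or infimum) is obtained by testing on Gaussians, and the opposite inequality is the substantive content.

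For part \ref{item:Beckner-Brascamp-Lieb-obverse}, I would follow Beckner's original strategy. First, apply the Riesz rearrangement inequality to reduce to symmetric decreasing $\phi_1, \phi_2$: replacing each $\phi_k$ by its symmetric decreasing rearrangement only increases $\|\phi_1 * \phi_2\|_{q(P)}$ while preserving $\|\phi_k\|_{p_k}$. Second, invoke Lieb's theorem that among such radial functions centered Gaussians maximize the Young functional; Beckner originally handled this via a sharp two-point inequality bootstrapped through independent tensor products and the central limit theorem. Third, substitute $\phi_k(x) = e^{-a_k |x|^2}$, evaluate $\|\phi_k\|_{p_k}$ and $\|\phi_1 * \phi_2\|_{q(P)}$ as explicit Gaussian integrals, and optimize in $a_1, a_2$ to obtain $C(P)^n$. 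The matching lower bound $Y_O(P,\mathbb{R}^n) \geq C(P)^n$ is immediate from the same Gaussian calculation.

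For part \ref{item:Beckner-Brascamp-Lieb-reverse}, symmetric rearrangement goes the wrong way for a lower bound, so I would follow Barthe's optimal transport argument. Given $\phi_1, \phi_2$ with $\|\phi_k\|_{p_k} = 1$, let $g_k$ be Gaussian densities whose parameters saturate the reverse Young inequality in the Gaussian case. Construct Brenier maps $T_k = \nabla u_k$ (for convex potentials $u_k$) pushing $g_k^{p_k}$ forward to $\phi_k^{p_k}$, and rewrite $\|\phi_1 * \phi_2\|_{q(P)}^{q(P)}$ using the change of variables induced by $T_1$ and $T_2$. The Monge--Amp\`ere relation $\phi_k^{p_k}(T_k(x)) \det D^2 u_k(x) = g_k^{p_k}(x)$ combined with the concavity inequality $\det(A+B)^{1/n} \geq \det(A)^{1/n} + \det(B)^{1/n}$ for nonnegative symmetric matrices, applied pointwise at $(x, y-x)$ in the inner convolution integral and followed by a reverse H\"older step, yields a lower bound that integrates to $C(P)^n$. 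Matching from above is again by testing on Gaussians.

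The main obstacle in both cases is the reduction to Gaussian extremizers. The rearrangement step in \ref{item:Beckner-Brascamp-Lieb-obverse} only restricts to radial decreasing functions, and pinning Gaussians down as actual extremizers requires Lieb's heat-semigroup / CLT argument; for \ref{item:Beckner-Brascamp-Lieb-reverse}, the careful bookkeeping in the transport-plus-concavity step is where all the sharpness is concentrated. Once Gaussians are identified and one Gaussian computation is carried out in dimension $1$, the $n$-th power structure is automatic from the product form $e^{-a|x|^2} = \prod_{j=1}^{n} e^{-a x_j^2}$, and convolutions of Gaussians remain Gaussian, so the constant tensorizes cleanly to $C(P)^n$.
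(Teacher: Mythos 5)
This statement is presented in the paper as a \emph{Fact}, i.e.\ an imported result of Beckner and Barthe, and the paper gives no proof of it; the remark that follows it only surveys the proofs available in the literature. Your proposal can therefore only be compared against that literature, and as a roadmap it is faithful to it: symmetrization plus Gaussian extremizers for the direct inequality, and mass transport for the reverse one, are precisely the strategies the paper's remark enumerates. Two cautions, however. First, essentially all of the mathematical content is deferred to the steps you yourself flag as ``the main obstacle'': the Riesz rearrangement reduction and the Gaussian computation are routine, but the identification of Gaussians as extremizers (Lieb's theorem, or Beckner's two-point inequality bootstrapped through the central limit theorem) \emph{is} the theorem, so what you have is an accurate outline of where a proof can be found rather than a proof. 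Second, two of your specific claims are slightly off. The transport argument you describe for part (2) --- Brenier maps, the Monge--Amp\`ere relation, and the concavity of $A \mapsto (\det A)^{1/n}$ --- is Barthe's proof of the reverse \emph{Brascamp--Lieb} inequality; his proof of the reverse Young inequality in the cited paper works in one dimension with monotone transport maps on the line together with the weighted AM--GM inequality, and then tensorizes. And the tensorization to $\mathbb{R}^n$ is ``automatic'' only in the direction obtained by testing on product Gaussians; the opposite inequality, that the $n$-dimensional optimal constant is controlled by the $n$-th power of the one-dimensional one, requires a genuine (if standard) Fubini--Minkowski argument, so it should not be dismissed as following from the product form of $e^{-a|x|^2}$ alone.
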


Fact \ref{fact:Beckner-Brascamp-Lieb} \ref{item:Beckner-Brascamp-Lieb-reverse} was proved by Brascamp--Lieb when $n = 1$ \cite[Theorem 8]{MR412366}.

\begin{remark}

  There are some proofs of Fact \ref{fact:Beckner-Brascamp-Lieb} \ref{item:Beckner-Brascamp-Lieb-obverse}.

  \begin{enumerate}
    \item
          Beckner proved it by using the fact that pairs of Gaussian functions are solutions of functional equations which are necessary for $\| \phi_1^* * \phi_2^* \|_{q (P)}$ to be the maximum \cite[Theorem 3]{MR385456}.

    \item
          Brascamp--Lieb proved it by showing $Y_O ( P, \mathbb{R}^n ) = Y_O ( P ,\mathbb{R} )^n$ and by observing the behavior of the limit of an upper bound for $Y_O ( P, \mathbb{R}^n )$ as $n \to \infty$ \cite[Section 2.5]{MR412366}.

    \item
          Barthe gave a direct proof, which utilizes the change of variable by Henstock--Macbeath \cite[Section 5]{MR56669} and the weighted AM-GM inequality \cite[Theorem 1]{MR1616143} (see also \cite{MR1650312}).

    \item
          Carlen--Lieb--Loss implicitly proved it by considering heat equations which have initial values $\phi_1^{p_1}$ and $\phi_2^{p_2}$, and by showing that the $L^{q (P)}$-norm of the convolution of these functions is increasing with respect to time \cite[Theorem 3.1]{MR2077162}.
          A similar argument appears in the papers of Bennett--Carbery--Christ--Tao \cite[Example 1.5]{MR2377493} and Bennett--Bez \cite[Section 1.1]{MR2496567}.

    \item
          Cordero-Erausquin--Ledoux proved it by estimating $\exp \circ Y_O ( P , \mathbb{R} )$ from above by a linear combination of Shannon's differential entropy of functions, and by calculating the Fisher information of these functions \cite[Theorem 6]{MR2644890}.

  \end{enumerate}

  Brascamp--Lieb \cite[Theorem 8]{MR412366}, Barthe \cite[Theorem 1]{MR1616143}, and Bennett--Bez \cite[Section 1.1]{MR2496567} also proved Fact \ref{fact:Beckner-Brascamp-Lieb} \ref{item:Beckner-Brascamp-Lieb-reverse} by similar arguments.

\end{remark}

By Fact \ref{fact:Beckner-Brascamp-Lieb}, Corollary \ref{cor:Young-Fournier} is best possible for $G = \mathbb{R}$.
Corollary \ref{cor:Young-Fournier} \ref{item:Young-Fournier-obverse} can be shown by using Fact \ref{fact:Beckner-Brascamp-Lieb} and the following fact in some cases of $G$.

\begin{fact}[Cowling--Martini--M\"{u}ller--Parcet, {\cite[Proposition 2.2 (iii)]{MR4000236}}]
  \label{fact:Cowling-Martini-Muller-Parcet}

  Let $G$, $P = (p_1 , p_2) \in \mathbb{R}_{\geq 1}^2$, and $\tilde{Y}_O (P , G)$ be as in Definition \ref{def:Young-optimal} for $N = 2$.
  Then we have $\tilde{Y}_O (P , G) \leq \tilde{Y}_O (P , G') \tilde{Y}_O (P , G / G')$ for any closed normal unimodular subgroup $G' \subset G$.

\end{fact}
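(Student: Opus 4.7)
The plan is to reduce the convolution on $G$ to a two-level structure via Weil's formula, with fiberwise convolutions on cosets of $G'$ combined according to a convolution on $G/G'$, and then apply the definition of $\tilde{Y}_O$ at each of the two levels. Fix $(\phi_1, \phi_2) \in \tilde{\mathcal{B}}$. Since $G$, $G'$, and $G/G'$ are all unimodular, I would choose compatible Haar measures so that $\int_G F(g)\,dg = \int_{G/G'} \int_{G'} F(yh)\,dh\,d\bar y$ holds for admissible $F$. For each $\bar g \in G/G'$ and a representative $g$, define $\phi_i^{\bar g}(h) := \phi_i(gh)$ as a function on $G'$; its $L^{p_i}(G')$-norm is independent of the choice of representative by right-invariance of Haar measure on $G'$, so $u_i(\bar g) := \|\phi_i^{\bar g}\|_{L^{p_i}(G')}$ is a well-defined function on $G/G'$, and by Fubini--Weil $\|u_i\|_{L^{p_i}(G/G')} = \|\phi_i\|_{L^{p_i}(G)} = 1$.

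Next, I would rewrite the restriction of $\phi_1 * \phi_2$ to the coset $\bar g G'$ as a superposition (indexed by $G/G'$) of genuine $G'$-convolutions. Expanding the convolution via Weil's formula and invoking normality of $G'$ together with the fact that conjugation by any $w \in G$ preserves Haar measure on $G'$ (a consequence of the combined unimodularity of $G$ and $G'$), the inner integrand identifies, up to measure-preserving bijections of $G'$, with $(\phi_1^{\bar y} *_{G'} \tau \phi_2^{\bar y^{-1} \bar g})(k)$ for an appropriate translate $\tau$. Applying the definition of $\tilde{Y}_O(P, G')$ fiberwise yields
\begin{align*}
  \bigl\| \phi_1^{\bar y} *_{G'} \tau \phi_2^{\bar y^{-1} \bar g} \bigr\|_{L^{q(P)}(G')} \leq \tilde{Y}_O(P, G')\, u_1(\bar y)\, u_2(\bar y^{-1} \bar g),
\end{align*}
and since $q(P) \geq 1$ when $p_1, p_2 \geq 1$, Minkowski's integral inequality then gives
\begin{align*}
  \bigl\| (\phi_1 * \phi_2)\big|_{\bar g G'} \bigr\|_{L^{q(P)}(G')} \leq \tilde{Y}_O(P, G') \cdot (u_1 *_{G/G'} u_2)(\bar g).
\end{align*}

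Finally, I would take the $L^{q(P)}(G/G')$-norm of both sides: Weil's formula converts the left-hand side into $\|\phi_1 * \phi_2\|_{L^{q(P)}(G)}$, while the definition of $\tilde{Y}_O(P, G/G')$ applied to $u_1 *_{G/G'} u_2$ bounds the right-hand side by $\tilde{Y}_O(P, G')\, \tilde{Y}_O(P, G/G')\, \|u_1\|_{p_1}\, \|u_2\|_{p_2} = \tilde{Y}_O(P, G')\, \tilde{Y}_O(P, G/G')$. Taking the supremum over $\tilde{\mathcal{B}}$ completes the argument. The main obstacle, and really the only nontrivial part of the bookkeeping, lies in the step where the inner integral is recast as a $G'$-convolution: one must absorb the conjugation maps $h \mapsto y^{-1} h y$ into the fiber data and verify that they are Haar-measure preserving on $G'$, which is exactly where unimodularity of both $G$ and $G'$ is used; one must also track the representative-dependence of $\phi_i^{\bar g}$ carefully so that all norms of fibers descend to well-defined functions on $G/G'$. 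Once this measure-theoretic identification is in place, the proof is simply two invocations of the definition of $\tilde{Y}_O$ bracketing Minkowski's integral inequality.
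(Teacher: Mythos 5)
This statement is labelled a \emph{Fact} and the paper gives no proof of it at all --- it is imported verbatim from Cowling--Martini--M\"{u}ller--Parcet \cite[Proposition 2.2 (iii)]{MR4000236} (with the semidirect-product case credited to Klein--Russo) --- so there is no in-paper argument to compare yours against. Your proof is the standard fibration argument that the cited sources themselves use: Weil's formula to disintegrate $G$ over $G/G'$, the identification of the inner integral as a $G'$-convolution after absorbing a conjugation, the fiberwise application of $\tilde{Y}_O(P,G')$, Minkowski's integral inequality (valid since $q(P) \geq 1$ for $p_1,p_2 \geq 1$), and then $\tilde{Y}_O(P,G/G')$ applied to $u_1 *_{G/G'} u_2$ with $\|u_i\|_{p_i} = \|\phi_i\|_{p_i} = 1$. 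The outline is correct, and since all functions are nonnegative the Tonelli manipulations need no integrability hypotheses. One small imprecision: the invariance of the Haar measure of $G'$ under conjugation by elements of $G$ is governed by the relation between $\Delta_G$ and $\Delta_{G/G'}$, so what you actually need is unimodularity of $G$ \emph{and of the quotient} $G/G'$, not of $G$ and $G'$ as you wrote; unimodularity of $G/G'$ is in any case implicitly required for $\tilde{Y}_O(P,G/G')$ to be defined at all (it does not follow automatically from $G$ and $G'$ being unimodular), so this is a hypothesis already built into the statement rather than a gap in your argument.
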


When $G$ is expressed as $G = G' \rtimes G / G'$, Fact \ref{fact:Cowling-Martini-Muller-Parcet} was proved by Klein--Russo \cite[Lemma 2.4]{MR499945}.
We can show Corollary \ref{cor:Young-Fournier} \ref{item:Young-Fournier-obverse} for some cases of $G$ such as non-compact nilpotent Lie groups by Fact \ref{fact:Beckner-Brascamp-Lieb} \ref{item:Beckner-Brascamp-Lieb-obverse} and Fact \ref{fact:Cowling-Martini-Muller-Parcet} (see also \cite{MR1304346}).
On the other hand, when $G$ is a semisimple Lie group, Corollary \ref{cor:Young-Fournier} cannot be proved only by this argument.
As far as the author knows, Corollary \ref{cor:Young-Fournier} has not been known in some cases of $G$ such as semisimple Lie groups or compact groups.

When $G$ is a Lie group, the optimal constant of Young's inequality depends only on the dimension $\dim G$ under a stronger assumption than \eqref{eq:supp-small}.

\begin{fact}[Bennett--Bez--Buschenhenke--Cowling--Flock, {\cite[Corollary 2.4]{MR4173156}}]
  \label{fact:Bennett-Bez-Buschenhenke-Cowling-Flock}

  Let $P = (p_1,p_2) \in \mathbb{R}_{> 1}^2$ and $C(P)$ be as in Definition \ref{def:Young-optimal} for $N = 2$.
  Then for any unimodular Lie group $G$ and $Y > C (P)^{\dim G}$, there exists a non-empty open subset $U \subset G$ such that
  \begin{align}
    \| \phi_1 * \phi_2 \|_p \leq Y \| \phi_1 \|_{p_1} \| \phi_2 \|_{p_2} \label{eq:Bennett-Bez-Buschenhenke-Cowling-Flock-inequality}
  \end{align}
  holds for any measurable functions $\phi_1 , \phi_2 \colon G \to \mathbb{R}_{\geq 0}$ with $\mathrm{supp} \; \phi_1 , \mathrm{supp} \; \phi_2 \subset U$.

\end{fact}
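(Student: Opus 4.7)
The plan is to exploit the fact that a Lie group $G$ of dimension $n = \dim G$ looks locally like $\mathbb{R}^n$, and then to transfer Beckner's sharp constant $C(P)^n$ on $\mathbb{R}^n$ (Fact \ref{fact:Beckner-Brascamp-Lieb} \ref{item:Beckner-Brascamp-Lieb-obverse}) to $G$ via a local perturbation argument. Since the statement is purely local and asks only for an approximate inequality (any $Y > C(P)^{\dim G}$), the strategy is to choose $U$ as a small enough open neighborhood of the identity so that the group multiplication and Haar density differ from Euclidean addition and Lebesgue measure by arbitrarily small factors.

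First I would fix an open neighborhood $V$ of $0 \in \mathfrak{g}$ on which $\exp \colon \mathfrak{g} \to G$ is a diffeomorphism onto its image and set $U := \exp(V)$. For a function $\phi$ supported in $U$, one pulls $\phi$ back to $V \subset \mathbb{R}^n$ via $\exp$, absorbing the Jacobian $J(X) := |\det(d\exp_X)|$ so as to preserve $L^p$-norms: define $\tilde{\phi}(X) := \phi(\exp X) J(X)^{1/p}$, so that $\|\tilde{\phi}\|_{L^p(\mathbb{R}^n)} = \|\phi\|_{L^p(G)}$. The convolution on $G$ restricted to $U$ then becomes a convolution-like operation on $V$ whose kernel involves the Baker--Campbell--Hausdorff product $X \bullet Y := \log(\exp X \exp Y)$ in place of $X + Y$, together with Jacobian correction factors.

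Next I would use that $J(X) \to 1$ and $X \bullet Y \to X + Y$ uniformly as $X, Y \to 0$ by the Baker--Campbell--Hausdorff expansion, with discrepancies controlled by $|X||Y|$. Applying Beckner's equality $Y_O(P, \mathbb{R}^n) = C(P)^n$ (Fact \ref{fact:Beckner-Brascamp-Lieb} \ref{item:Beckner-Brascamp-Lieb-obverse}) to the genuine Euclidean convolution of $\tilde{\phi}_1$ and $\tilde{\phi}_2$ yields the bound $C(P)^n \|\tilde{\phi}_1\|_{p_1} \|\tilde{\phi}_2\|_{p_2} = C(P)^n \|\phi_1\|_{p_1} \|\phi_2\|_{p_2}$. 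The deformation from $G$-convolution to Euclidean convolution introduces a multiplicative error factor on the $L^p$ side that tends to $1$ as $\mathrm{diam}(V) \to 0$; shrinking $V$ keeps this factor below $Y / C(P)^n$, which gives \eqref{eq:Bennett-Bez-Buschenhenke-Cowling-Flock-inequality}.

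The hard part is making the perturbation uniform in the pair $(\phi_1,\phi_2)$: one must estimate the discrepancy between $\phi_1 * \phi_2$ computed via the group law and the ordinary Euclidean convolution of the pulled-back densities, and show that on a sufficiently small $U$ this discrepancy translates into a multiplicative $L^p$-constant arbitrarily close to $1$. A natural way to arrange this is a change-of-variable argument that rewrites both convolutions as integrals over $V \times V$ with nearly identical kernels, and then majorizes the Euclidean version by Beckner's sharp inequality; unimodularity of $G$ eliminates any potential contribution from the modular function, so no correction beyond the Jacobian of $\exp$ is needed.
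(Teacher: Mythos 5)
This statement is a \emph{Fact} quoted from Bennett--Bez--Buschenhenke--Cowling--Flock \cite[Corollary 2.4]{MR4173156}; the paper gives no proof of it (its only added remark is that connectedness of $G$ may be dropped by taking $U \subset G_0$). So your proposal has to be judged against the argument in the cited reference, and there it falls short: the step you yourself flag as ``the hard part'' is in fact the entire theorem, and the resolution you sketch does not work. After pulling back by $\exp$ and changing variables, the group convolution at the point $\exp Z$ takes the form $\int \tilde{\phi}_1\bigl(Z \bullet (-Y)\bigr)\, \tilde{\phi}_2(Y)\, (\text{Jacobian})\, dY$, where $\bullet$ is the Baker--Campbell--Hausdorff product. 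The BCH expansion tells you that the \emph{argument} $Z \bullet (-Y)$ is uniformly close to $Z - Y$, but $\tilde{\phi}_1$ is merely measurable, so $\tilde{\phi}_1\bigl(Z\bullet(-Y)\bigr)$ and $\tilde{\phi}_1(Z-Y)$ need bear no pointwise relation whatsoever; composing a rough function with a slightly perturbed diffeomorphism is not a small multiplicative perturbation of the integrand, and there is no ``nearly identical kernels'' comparison that majorizes one integral by $(1+\epsilon)$ times the other uniformly over all admissible $(\phi_1,\phi_2)$. One also cannot fix this by density, since approximating near-extremizers by smooth functions does not preserve near-extremality with uniform control.

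What is actually needed is the statement that the optimal constant in a Brascamp--Lieb-type inequality is stable under \emph{nonlinear} perturbation of the underlying maps, uniformly over all measurable inputs supported in a shrinking neighborhood. That stability is precisely the nonlinear Brascamp--Lieb inequality, the main theorem of \cite{MR4173156}, and its proof proceeds by an induction-on-scales argument building on Bennett--Carbery--Christ--Tao (who handled the already nontrivial case of linear perturbations). Your reduction to a local statement near the identity, the use of $\exp$ with Jacobian factors chosen to preserve $L^p$-norms, and the appeal to Beckner's sharp constant on $\mathbb{R}^n$ are all the correct setup and match the strategy of the reference; the gap is that the perturbation step is asserted rather than proved, and it is not provable by the elementary means you indicate.
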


In the original paper \cite[Corollary 2.4]{MR4173156}, Fact \ref{fact:Bennett-Bez-Buschenhenke-Cowling-Flock} was proved only when $G$ is connected.
However, this connectedness is not necessary because we can take $U$ such as $U \subset G_0$.
By the result of Cowling--Martini--M\"{u}ller--Parcet \cite[Proposition 2.4 (i)]{MR4000236}, the constant $C (P)^{\dim G}$ in Fact \ref{fact:Bennett-Bez-Buschenhenke-Cowling-Flock} is the best possible for any $G$.

Since the optimal constant of Fact \ref{fact:Bennett-Bez-Buschenhenke-Cowling-Flock} is smaller than that of Corollary \ref{cor:Young-Fournier} (when $\dim G \geq 2$), Fact \ref{fact:Bennett-Bez-Buschenhenke-Cowling-Flock} gives a stronger bound than Corollary \ref{cor:Young-Fournier}.
On the other hand, the assumption \eqref{eq:supp-small} in Corollary \ref{cor:Young-Fournier} is weaker than the assumption in Fact \ref{fact:Bennett-Bez-Buschenhenke-Cowling-Flock} and hence there are more functions which can be applied to Corollary \ref{cor:Young-Fournier}.
In this sense, Corollary \ref{cor:Young-Fournier} is a new result for some locally compact groups such as $G = S^1$ as far as the author knows.

To point out the difference between Corollary \ref{cor:Young-Fournier} and Fact \ref{fact:Bennett-Bez-Buschenhenke-Cowling-Flock} more precisely, we consider the case of $G = \mathbb{R} \times S^1$.
Fact \ref{fact:Bennett-Bez-Buschenhenke-Cowling-Flock} implies that \eqref{eq:Bennett-Bez-Buschenhenke-Cowling-Flock-inequality} holds for any $Y > C (P)^2$, $\phi_1$, and $\phi_2$ when $\mathrm{supp} \; \phi_1$ and $\mathrm{supp} \; \phi_2$ are sufficiently small.
On the other hand, Corollary \ref{cor:Young-Fournier} implies that \eqref{eq:Bennett-Bez-Buschenhenke-Cowling-Flock-inequality} holds for $Y = C (P)$ and any measurable functions $\phi_1$ and $\phi_2$.
In fact, we can find examples such that the equality of \eqref{eq:Bennett-Bez-Buschenhenke-Cowling-Flock-inequality} holds by considering $S^1$-invariant functions \cite[Section IV.5]{MR385456}.

\subsection{The case of more than two functions}
\label{subsec:Young-function-more-2}

For $N \geq 3$, similar statements hold by repeating the arguments in Subsection \ref{subsec:Young-function-2}.
For example, we consider the case of $N = 3$.
Since
\begin{align*}
  \frac{1}{q ( q (p_1 , p_2) , p_3)}
  = \frac{1}{q (p_1 , p_2)} + \frac{1}{p_3} - 1
  = \frac{1}{p_1} + \frac{1}{p_2} + \frac{1}{p_3} - 2
  = \frac{1}{q (p_1,p_2,p_3)},
\end{align*}
we have $q ( q (p_1 , p_2) , p_3) = q (p_1,p_2,p_3)$.
Thus,
\begin{align*}
  C (p_1,p_2,p_3)
  = \frac{B(p_1) B(p_2) B(p_3)}{B \circ q (p_1,p_2,p_3)}
  = \frac{B(p_1) B(p_2)}{B \circ q (p_1,p_2)} \cdot \frac{B \circ q (p_1 , p_2) B(p_3)}{B \circ q ( q (p_1,p_2) , p_3 )}
  = C (p_1,p_2) C (q (p_1 , p_2) , p_3)
\end{align*}
holds and hence we have similar statements by repeating the arguments in Subsection \ref{subsec:Young-function-2}.
Similarly, we can generalize these statements for any $N \in \mathbb{Z}_{\geq 1}$.

Corollary \ref{cor:Young-Fournier} can be generalized for any $N$ by weakening the assumption \eqref{eq:supp-small} to $m (G) = \infty$.
That is, we have the following statement by Corollary \ref{cor:Young-Fournier}.

\begin{corollary}[Corollary \ref{cor:Young-Fournier} for any $N$]
  \label{cor:Young-Fournier-N-general}

  Let $N$, $G$, $P$, $\tilde{Y}_O (P, G)$, $\tilde{Y}_R (P , G)$, and $C(P)$ be as in Definition \ref{def:Young-optimal} with $m (G) = \infty$.

  \begin{enumerate}
    \item \label{item:Young-Fournier-N-general-obverse}
          If $P \in \mathbb{R}_{> 1}^N$, then we have $\tilde{Y}_O (P, G) \leq C(P)$.

    \item
          If $P \in (0,1)^N$, then we have $\tilde{Y}_R (P, G) \geq C(P)$.

  \end{enumerate}

\end{corollary}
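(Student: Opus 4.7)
The plan is to prove Corollary \ref{cor:Young-Fournier-N-general} by induction on $N$, using associativity of convolution to peel off the first two factors and apply the $N = 2$ case (Corollary \ref{cor:Young-Fournier}) at each step. Both parts of the corollary are equivalent, by homogeneity, to the unnormalized functional inequality $\| \phi_1 * \cdots * \phi_N \|_{q(P)} \leq C(P) \prod_{k=1}^N \| \phi_k \|_{p_k}$ (and its reverse), so I will work with this form. The base case $N = 1$ is the tautology $\|\phi_1\|_{p_1} \leq \|\phi_1\|_{p_1}$, since $q(p_1) = p_1$ and $C(p_1) = 1$; the case $N = 2$ is Corollary \ref{cor:Young-Fournier} itself, applicable unconditionally because $m(G) = \infty$ renders the support hypothesis \eqref{eq:supp-small} vacuous.

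For the inductive step from $N - 1$ to $N$, the two key algebraic identities to verify are
\begin{align*}
  q(q(p_1, p_2), p_3, \ldots, p_N) &= q(p_1, p_2, \ldots, p_N), \\
  C(p_1, p_2) \cdot C(q(p_1, p_2), p_3, \ldots, p_N) &= C(p_1, p_2, \ldots, p_N);
\end{align*}
each is a one-line unfolding of Definition \ref{def:Young-optimal} (the first from $1/q(p_1, p_2) = 1/p_1 + 1/p_2 - 1$, the second by telescoping through the shared factor $B(q(p_1, p_2))$, exactly as in the $N = 3$ calculation carried out in Subsection \ref{subsec:Young-function-more-2}). I would then check that the exponent range is preserved: in part (1), $p_1, p_2 > 1$ together with $1/p_1 + 1/p_2 > 1$ forces $q(p_1, p_2) > 1$; in part (2), $p_1, p_2 \in (0, 1)$ forces $q(p_1, p_2) \in (0, 1)$; and the reduced condition $1/q(p_1, p_2) + \sum_{k = 3}^N 1/p_k > N - 2$ is immediate. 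Hence the $(N - 1)$-tuple $(q(p_1, p_2), p_3, \ldots, p_N)$ satisfies the hypotheses of the inductive statement.

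The inductive step is then a one-line chain: writing $\phi_1 * \cdots * \phi_N = (\phi_1 * \phi_2) * \phi_3 * \cdots * \phi_N$, applying the inductive hypothesis to the $(N-1)$-tuple $(\phi_1 * \phi_2, \phi_3, \ldots, \phi_N)$, and then applying Corollary \ref{cor:Young-Fournier}~\ref{item:Young-Fournier-obverse} (respectively \ref{item:Young-Fournier-reverse}) to bound $\| \phi_1 * \phi_2 \|_{q(p_1, p_2)}$, gives the desired estimate via the multiplicativity identity above. The reverse direction is formally identical with all inequalities flipped.

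There is no genuine analytic obstacle; the entire argument is bookkeeping of exponents once the two identities are in hand. The only point requiring a modicum of care is maintaining, at each inductive step, the correct range for the reduced exponent $q(p_1, p_2)$ so that the appropriate form (obverse or reverse) of the $N = 2$ corollary remains applicable, but this is automatic from the observation that each of the intervals $(1, \infty)$ and $(0, 1)$ is preserved under the operation $(p_1, p_2) \mapsto q(p_1, p_2)$.
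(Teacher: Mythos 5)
Your proposal is correct and follows essentially the same route as the paper: the paper proves the corollary by exactly the iteration you describe, establishing the identities $q(q(p_1,p_2),p_3,\dots) = q(p_1,\dots,p_N)$ and $C(p_1,\dots,p_N) = C(p_1,p_2)\,C(q(p_1,p_2),p_3,\dots)$ in Subsection \ref{subsec:Young-function-more-2} and then "repeating the arguments" of the $N=2$ case, which is applicable without restriction since $m(G)=\infty$ makes \eqref{eq:supp-small} vacuous. Your write-up merely makes the induction and the preservation of the exponent ranges explicit, which the paper leaves implicit.
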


Similarly, the other statements in Subsection \ref{subsec:Young-function-2} can be generalized to $N \geq 3$.
That is, the following statements hold.

\begin{fact}[Fact \ref{fact:Young} for any $N$, Klein--Russo, {\cite[Corollary 2.3]{MR499945}}]

  Let $N$, $G$, $P \in \mathbb{R}_{\geq 1}^N$, and $\tilde{Y}_O (P , G)$ be as in Definition \ref{def:Young-optimal}.
  Then we have $\tilde{Y}_O (P , G) \leq 1$.

\end{fact}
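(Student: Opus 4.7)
The plan is to proceed by induction on $N$, using Fact \ref{fact:Young} (the case $N = 2$) together with the associativity of convolution and the exponent identity $q(q(P'), p_N) = q(P)$ derived at the start of this subsection, where $P' := (p_1, \ldots, p_{N-1})$. The base case $N = 1$ is immediate since $q(p_1) = p_1$, and $N = 2$ is exactly Fact \ref{fact:Young}.

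For the inductive step, assume the claim holds for $N - 1$. Given $P \in \mathbb{R}_{\geq 1}^N$ with $\sum_{k=1}^N 1/p_k > N - 1$, I would first verify that the intermediate exponent $q(P')$ is well defined and that the pair $(q(P'), p_N)$ lies in the admissible region for Fact \ref{fact:Young}. Since each $1/p_k \leq 1$, we have $\sum_{k=1}^{N-1} 1/p_k > N - 2$, so $q(P')$ exists; moreover
\begin{align*}
  \tfrac{1}{q(P')} = 2 - N + \sum_{k=1}^{N-1} \tfrac{1}{p_k} \leq 1, \qquad \tfrac{1}{q(P')} + \tfrac{1}{p_N} = 1 + \tfrac{1}{q(P)} > 1,
\end{align*}
so $q(P') \geq 1$ and the pair $(q(P'), p_N)$ is admissible, and the exponent identity $q(q(P'), p_N) = q(P)$ follows by a direct rearrangement. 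Then, for any nonnegative measurable $\phi_1, \ldots, \phi_N$ with $\|\phi_k\|_{p_k} = 1$, the inductive hypothesis applied to $(\phi_1, \ldots, \phi_{N-1})$ yields $\|\phi_1 * \cdots * \phi_{N-1}\|_{q(P')} \leq 1$, and a further application of Fact \ref{fact:Young} to $\phi_1 * \cdots * \phi_{N-1}$ and $\phi_N$ gives $\|\phi_1 * \cdots * \phi_N\|_{q(P)} \leq \|\phi_1 * \cdots * \phi_{N-1}\|_{q(P')} \cdot \|\phi_N\|_{p_N} \leq 1$. Taking the supremum over admissible tuples completes the induction.

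I do not anticipate any substantial obstacle: the entire argument reduces to a bookkeeping of exponents, and the only technical point is to confirm at each step that the intermediate exponent $q(P')$ lies in $[1, \infty)$ so that Fact \ref{fact:Young} can be reinvoked, which follows automatically from the assumption $P \in \mathbb{R}_{\geq 1}^N$.
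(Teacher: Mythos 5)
Your proposal is correct and follows essentially the same route the paper indicates: Subsection \ref{subsec:Young-function-more-2} derives the exponent identity $q(q(p_1,p_2),p_3) = q(p_1,p_2,p_3)$ precisely so that the $N=2$ case (Fact \ref{fact:Young}) can be iterated, which is your induction. Your added check that the intermediate exponent $q(P')$ lies in $[1,\infty)$ and that the pair $(q(P'),p_N)$ satisfies \eqref{eq:p1p2-small} is a correct filling-in of details the paper leaves implicit.
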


\begin{corollary}[Theorem \ref{thm:reverse-Young} for any $N$]

  Let $N$, $G$, $P \in (0,1]^N$, and $\tilde{Y}_R (P , G)$ be as in Definition \ref{def:Young-optimal}.
  Then we have $\tilde{Y}_R (P , G) \geq 1$.

\end{corollary}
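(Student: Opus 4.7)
My plan is to prove the statement by induction on $N$, using Theorem \ref{thm:reverse-Young} as the base case $N = 2$ (the case $N = 1$ is trivial since $q(p_1) = p_1$). This follows the strategy sketched for $N = 3$ at the start of Subsection \ref{subsec:Young-function-more-2}: iterate the two-function reverse Young's inequality. For the inductive step, assume the statement for $N-1$ functions and let $\phi_1, \ldots, \phi_N \colon G \to \mathbb{R}_{\geq 0}$ with $\|\phi_k\|_{p_k} = 1$. Writing $\psi := \phi_1 * \cdots * \phi_{N-1}$ and $\tilde{p} := q(p_1, \ldots, p_{N-1})$, the idea is to rewrite the $N$-fold convolution as $\psi * \phi_N$ and apply Theorem \ref{thm:reverse-Young} to the exponent pair $(\tilde{p}, p_N)$.

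Two arithmetic checks make the induction work. Since each $p_k \leq 1$, the reciprocal $1/\tilde{p} = (2-N) + \sum_{k=1}^{N-1} 1/p_k$ is at least $1$, so $\tilde{p} \in (0, 1]$ and Theorem \ref{thm:reverse-Young} is applicable to the pair $(\tilde{p}, p_N)$. Moreover $1/q(\tilde{p}, p_N) = 1/\tilde{p} + 1/p_N - 1 = (1 - N) + \sum_{k=1}^{N} 1/p_k = 1/q(P)$, so $q(\tilde{p}, p_N) = q(P)$. The inductive hypothesis, applied in its homogeneous form (obtained from the unit-norm version by rescaling each $\phi_k$ by $1/\|\phi_k\|_{p_k}$), gives $\|\psi\|_{\tilde{p}} \geq \prod_{k=1}^{N-1} \|\phi_k\|_{p_k} = 1$. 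Theorem \ref{thm:reverse-Young} then yields $\|\psi * \phi_N\|_{q(P)} \geq \|\psi\|_{\tilde{p}} \|\phi_N\|_{p_N} \geq 1$, completing the induction.

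I do not anticipate any serious obstacle. Once the identity $q(\tilde{p}, p_N) = q(P)$ is observed, the argument is routine bookkeeping; the only minor points are the passage between unit-norm and homogeneous forms of the reverse Young's inequality, and the trivial dispatch of degenerate cases such as $\|\psi\|_{\tilde{p}} \in \{0, \infty\}$. An alternative route avoiding induction would be a single reverse-H\"{o}lder argument with exponents $p_1', \ldots, p_N', q(P)$ (whose reciprocals sum to $1$), directly generalizing the three-term H\"{o}lder decomposition used in the proof of Theorem \ref{thm:reverse-Young}; but the inductive route is shorter and keeps the two-function case as a reusable black box.
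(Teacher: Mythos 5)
Your proof is correct and is essentially the paper's own argument: Subsection \ref{subsec:Young-function-more-2} proves this corollary precisely by iterating Theorem \ref{thm:reverse-Young} via the exponent identity $q(q(p_1,\ldots,p_{N-1}),p_N)=q(P)$, which is the identity your induction rests on. The points you flag (the homogeneous rescaling and the degenerate case $\|\psi\|_{\tilde p}=\infty$, handled by truncation and monotonicity of convolution of nonnegative functions) are indeed routine.
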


\begin{fact}[Fact \ref{fact:Beckner-Brascamp-Lieb} for any $N$]

  We let $N$, $G$, $P$, $\tilde{Y}_O (P, G)$, $\tilde{Y}_R (P , G)$, and $C(P)$ be as in Definition \ref{def:Young-optimal}, and $n \in \mathbb{Z}_{\geq 1}$.

  \begin{enumerate}
    \item
          (Beckner, \cite[Theorem 4]{MR385456})
          If $P \in \mathbb{R}_{> 1}^N$, then we have $\tilde{Y}_O ( P , \mathbb{R}^n ) = C(P)^n$.

    \item
          If $P \in (0,1)^N$, then we have $\tilde{Y}_R ( P , \mathbb{R}^n ) = C(P)^n$.

  \end{enumerate}

\end{fact}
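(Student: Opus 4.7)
The plan is to prove both parts by induction on $N$, using Fact \ref{fact:Beckner-Brascamp-Lieb} as the base case $N = 2$. The engine of the induction is a pair of compatibility identities whose $N = 3$ instance was already observed at the start of Subsection \ref{subsec:Young-function-more-2}: namely
\begin{align*}
  q \bigl( q (p_1 , \ldots , p_{N-1}) , p_N \bigr) = q (p_1 , \ldots , p_N)
  \quad \text{and} \quad
  C (p_1 , \ldots , p_N) = C (p_1 , \ldots , p_{N-1}) \cdot C \bigl( q (p_1 , \ldots , p_{N-1}) , p_N \bigr).
\end{align*}
Both are direct substitutions into the definitions of $q$ and $B$, so the only analytic input comes through Fact \ref{fact:Beckner-Brascamp-Lieb}.

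For part (1), I would assume inductively that $\tilde{Y}_O \bigl( (p_1 , \ldots , p_{N-1}) , \mathbb{R}^n \bigr) \leq C (p_1 , \ldots , p_{N-1})^n$ and set $\psi := \phi_1 * \cdots * \phi_{N-1}$. The inductive hypothesis, combined with homogeneity in each $\phi_k$, yields $\| \psi \|_{q (p_1 , \ldots , p_{N-1})} \leq C (p_1 , \ldots , p_{N-1})^n \prod_{k < N} \| \phi_k \|_{p_k}$. Because each $p_k > 1$, the intermediate exponent $q (p_1 , \ldots , p_{N-1})$ is also strictly greater than $1$, so the base case of Fact \ref{fact:Beckner-Brascamp-Lieb} \ref{item:Beckner-Brascamp-Lieb-obverse} applies to $\psi * \phi_N$ and produces an upper bound with constant $C \bigl( q (p_1 , \ldots , p_{N-1}) , p_N \bigr)^n$. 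Multiplying the two estimates and invoking the multiplicativity of $C$ closes the induction. Part (2) proceeds identically, with Fact \ref{fact:Beckner-Brascamp-Lieb} \ref{item:Beckner-Brascamp-Lieb-reverse} in place of Fact \ref{fact:Beckner-Brascamp-Lieb} \ref{item:Beckner-Brascamp-Lieb-obverse}; one only has to observe that $p_1 , \ldots , p_N < 1$ forces every iterated $q$ to lie in $(0,1)$ as well, which is immediate from $1/q = 1 - (N-1) + \sum_{k < N} 1/p_k > 1$.

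Sharpness---the matching reverse bounds $\tilde{Y}_O ( P , \mathbb{R}^n ) \geq C (P)^n$ and $\tilde{Y}_R ( P , \mathbb{R}^n ) \leq C (P)^n$---is obtained by exhibiting centered Gaussian extremizers. Taking $\phi_k (x) := \lambda_k \exp (-a_k |x|^2)$ with $a_k$ proportional to $p_k$ and $\lambda_k$ chosen so that $\| \phi_k \|_{p_k} = 1$, every iterated convolution $\phi_1 * \cdots * \phi_j$ is again Gaussian, and the equality case of the $N = 2$ result is realized at every stage of the induction; a routine computation with the Gaussian parameters gives $\| \phi_1 * \cdots * \phi_N \|_{q (P)} = C (P)^n$ exactly. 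The only real obstacle is notational: tracking these Gaussian parameters cleanly through $N-1$ iterations and verifying that the iterated exponents $q$ remain in $(1,\infty)$ or $(0,1)$. Both are mechanical once the compatibility identities above are in hand, and no new analytic ingredient beyond Fact \ref{fact:Beckner-Brascamp-Lieb} is required.
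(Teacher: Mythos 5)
Your proposal follows essentially the same route as the paper: Subsection \ref{subsec:Young-function-more-2} obtains this statement precisely by iterating the $N=2$ case of Fact \ref{fact:Beckner-Brascamp-Lieb} through the compatibility identities $q(q(p_1,\dots,p_{N-1}),p_N)=q(p_1,\dots,p_N)$ and $C(p_1,\dots,p_N)=C(p_1,\dots,p_{N-1})\,C(q(p_1,\dots,p_{N-1}),p_N)$, exactly as you do, and the checks that the iterated exponents stay in $(1,\infty)$ (resp.\ $(0,1)$) are the ones you record. One small correction to your sharpness sketch: the extremizing Gaussians $\phi_k(x)=\lambda_k e^{-a_k|x|^2}$ must have $a_k$ proportional to the conjugate exponents $p_k'$, not to $p_k$; with that choice one checks $q(p_1,p_2)'=p_1'p_2'/(p_1'+p_2')$, so the intermediate convolution is again extremal at every stage and the induction closes as you describe.
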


\begin{corollary}[Fact \ref{fact:Bennett-Bez-Buschenhenke-Cowling-Flock} for any $N$]

  Let $N$, $P \in \mathbb{R}_{> 1}^N$, $q (P)$, and $C (P)$ be as in Definition \ref{def:Young-optimal}.
  Then for any unimodular Lie group $G$ and $Y > C (P)^{\dim G}$, there exists a non-empty open subset $U \subset G$ such that
  \begin{align*}
    \| \phi_1 * \phi_2 * \cdots * \phi_N \|_{q (P)} \leq Y \prod_{k = 1}^N \| \phi_k \|_{p_k}
  \end{align*}
  holds for any measurable functions $\phi_1 , \phi_2 , \cdots , \phi_N \colon G \to \mathbb{R}_{\geq 0}$ with $\mathrm{supp} \; \phi_1 , \mathrm{supp} \; \phi_2 , \cdots , \mathrm{supp} \; \phi_N \subset U$.

\end{corollary}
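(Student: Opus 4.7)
The plan is to proceed by induction on $N \geq 2$, at each stage producing $U$ as an open neighborhood of the identity $e \in G$ (rather than merely a non-empty open set); this strengthening is what makes the iterative step manageable. The base case $N = 2$ is Fact \ref{fact:Bennett-Bez-Buschenhenke-Cowling-Flock}, after I upgrade its conclusion: if the Fact hands us some non-empty open $U \subset G$, I pick $a^{-1}, b \in U$ and observe that left-translating $\phi_1$ by $a^{-1}$ and right-translating $\phi_2$ by $b^{-1}$ preserves $L^{p_i}$-norms (by unimodularity) and only left- and right-translates $\phi_1 * \phi_2$, hence preserves $\|\phi_1 * \phi_2\|_{q(p_1,p_2)}$. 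Applying this to $\phi_1, \phi_2$ supported in the open neighborhood $V_0 := aU \cap Ub^{-1}$ of $e$ reduces the inequality to the original one for functions supported in $U$, delivering the strengthened Fact at $N = 2$.

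For the inductive step $N-1 \Rightarrow N$ with $N \geq 3$, I invoke the multiplicative factorization
\begin{align*}
C(P) = C(p_1, p_2) \cdot C(q(p_1, p_2), p_3, \ldots, p_N),
\end{align*}
obtained by iterating the identity $C(p_1,p_2,p_3) = C(p_1,p_2) \cdot C(q(p_1,p_2),p_3)$ from Subsection \ref{subsec:Young-function-more-2}, together with the analogous identity $q(q(p_1,p_2), p_3, \ldots, p_N) = q(P)$. Given $Y > C(P)^{\dim G}$, I split $Y \geq Y_1 Y_2$ with $Y_1 > C(p_1,p_2)^{\dim G}$ and $Y_2 > C(q(p_1,p_2), p_3, \ldots, p_N)^{\dim G}$. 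The strengthened base case supplies an open neighborhood $U_1$ of $e$ on which the bilinear $L^{q(p_1,p_2)}$-bound with constant $Y_1$ holds, and the induction hypothesis applied to the $(N-1)$-tuple $(q(p_1,p_2), p_3, \ldots, p_N) \in \mathbb{R}_{>1}^{N-1}$ (noting $\frac{1}{q(p_1,p_2)} + \sum_{k \geq 3} \frac{1}{p_k} = \sum_{k=1}^N \frac{1}{p_k} - 1 > N-2$) supplies an open neighborhood $U_2$ of $e$ on which the analogous $(N-1)$-fold bound with constant $Y_2$ holds.

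By continuity of multiplication at $(e,e)$ I choose an open neighborhood $V$ of $e$ with $V \subset U_1 \cap U_2$ and $V \cdot V \subset U_2$. For $\phi_1, \ldots, \phi_N$ supported in $V$, the bilinear bound gives $\|\phi_1 * \phi_2\|_{q(p_1,p_2)} \leq Y_1 \|\phi_1\|_{p_1} \|\phi_2\|_{p_2}$, and since $\mathrm{supp}(\phi_1 * \phi_2) \subset V \cdot V \subset U_2$ while $\mathrm{supp}(\phi_k) \subset V \subset U_2$ for $k \geq 3$, the induction hypothesis applied to $(\phi_1 * \phi_2, \phi_3, \ldots, \phi_N)$ closes the estimate as
\begin{align*}
\|\phi_1 * \cdots * \phi_N\|_{q(P)} \leq Y_2 \|\phi_1 * \phi_2\|_{q(p_1,p_2)} \prod_{k=3}^N \|\phi_k\|_{p_k} \leq Y_1 Y_2 \prod_{k=1}^N \|\phi_k\|_{p_k} \leq Y \prod_{k=1}^N \|\phi_k\|_{p_k}.
\end{align*}
The resulting $V$ is itself an open neighborhood of $e$, so the strengthened inductive statement is propagated.

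The main obstacle I expect is the base-case upgrade of Fact \ref{fact:Bennett-Bez-Buschenhenke-Cowling-Flock} to a neighborhood of $e$: on a non-abelian Lie group one cannot symmetrically left-translate both factors of a convolution, so the asymmetric left/right translation described above is essentially the only clean fix. Once that is secured, the remainder is routine iteration, and the exponent $\dim G$ on $C(P)$ is simply read off from the factorization of $C$ combined with $\dim G$ appearing identically at each layer of the induction.
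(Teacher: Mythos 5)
Your proof is correct and follows essentially the same route the paper intends: the paper establishes this corollary only by remarking that one iterates the $N = 2$ case via the identities $q(q(p_1,p_2),p_3,\ldots,p_N) = q(P)$ and $C(P) = C(p_1,p_2)\,C(q(p_1,p_2),p_3,\ldots,p_N)$, which is exactly your induction. Your extra care in upgrading the open set $U$ of Fact \ref{fact:Bennett-Bez-Buschenhenke-Cowling-Flock} to a neighborhood of the identity by a left translation of $\phi_1$ and a right translation of $\phi_2$ (so that the sets produced at successive stages can be nested) correctly fills in a detail the paper leaves implicit.
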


Fact \ref{fact:Cowling-Martini-Muller-Parcet} cannot be generalized for $N \geq 3$ only by repeating Fact \ref{fact:Cowling-Martini-Muller-Parcet}.
Nevertheless, Cowling--Martini--M\"{u}ller--Parcet proved Fact \ref{fact:Cowling-Martini-Muller-Parcet} for any $N$ as follows.

\begin{fact}[Fact \ref{fact:Cowling-Martini-Muller-Parcet} for any $N$, Cowling--Martini--M\"{u}ller--Parcet, {\cite[Proposition 2.2 (iii)]{MR4000236}}]

  Let $N$, $G$, $P \in \mathbb{R}_{> 1}^N$, and $\tilde{Y}_O (P , G)$ be as in Definition \ref{def:Young-optimal}.
  Then we have $\tilde{Y}_O (P , G) \leq \tilde{Y}_O (P , G') \tilde{Y}_O (P , G / G')$ for any closed normal unimodular subgroup $G' \subset G$.

\end{fact}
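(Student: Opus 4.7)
The plan is to fiber $G$ over the quotient $G/G'$ via Weil's formula, reducing the $N$-fold Young inequality on $G$ to a combination of the $N$-fold Young inequalities on $G'$ and on $G/G'$. Normality of $G'$ is used essentially: it ensures both that the left coset decomposition $G = s(G/G') \cdot G'$ (for a Borel section $s$) has right-$G'$-invariant fibers and that conjugation by section representatives preserves $G'$, so the convolution structure factors cleanly. Fix measurable $\phi_k \colon G \to \mathbb{R}_{\geq 0}$ with $\| \phi_k \|_{p_k , G} = 1$, and abbreviate $q := q(P)$. For a Borel section $s \colon G/G' \to G$, set $\phi_{k,\dot{g}}(h) := \phi_k(s(\dot{g}) h)$ and $\Phi_k(\dot{g}) := \| \phi_{k,\dot{g}} \|_{p_k , G'}$. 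By Weil's formula $\| \Phi_k \|_{p_k , G/G'} = \| \phi_k \|_{p_k , G} = 1$.

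The core of the proof is the pointwise fiber bound
\begin{align*}
  \left( \int_{G'} (\phi_1 * \cdots * \phi_N)(s(\dot{g}) h)^q \, dh \right)^{1/q} \leq \tilde{Y}_O (P, G') \cdot (\Phi_1 * \cdots * \Phi_N)(\dot{g})
\end{align*}
for almost every $\dot{g} \in G/G'$, where the convolution on the right is taken on $G/G'$. To prove this, I would unfold each of the $N - 1$ convolutions by Weil's formula; normality of $G'$ lets one write $g'^{-1} g = s(\dot{g}'^{-1} \dot{g}) \cdot \tau$ with $\tau \in G'$, which iterates to express the $N$-fold convolution on $G$ as an integral over $(G/G')^{N-1}$ whose inner integrand is an $N$-fold convolution on $G'$ of left translates of the slices $\phi_{k, \dot{v}_k}$ for cosets $\dot{v}_k$ dictated by the quotient convolution structure. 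Since left translation is an isometry of $L^{p_k}(G')$, each such factor still has $L^{p_k}$-norm $\Phi_k(\dot{v}_k)$. Minkowski's integral inequality in $L^q(G')$ moves the $L^q$-norm past the $(G/G')^{N-1}$-integral, and Young's inequality on $G'$ applied fiber-wise then yields the stated bound.

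Taking the $L^q$-norm over $G/G'$ of both sides of the fiber bound, using Weil's formula on the left and Young's inequality on $G/G'$ with constant $\tilde{Y}_O(P, G/G')$ on the right, gives $\| \phi_1 * \cdots * \phi_N \|_{q, G} \leq \tilde{Y}_O (P, G') \, \tilde{Y}_O (P, G/G')$, since each $\| \Phi_k \|_{p_k, G/G'} = 1$. The main obstacle will be the bookkeeping in the fiber decomposition: after repeatedly applying the normality-induced identity $k^{-1} s(\dot{v}) = s(\dot{v}) \cdot (s(\dot{v})^{-1} k^{-1} s(\dot{v}))$, one must verify that the cosets that appear match the $N$-fold convolution structure on $G/G'$ exactly and that the accumulated $G'$-valued factors act as measure-preserving reparametrizations of $G'$. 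This is where unimodularity of all three groups $G$, $G'$, and $G/G'$ enters, guaranteeing that no Jacobian factors appear and that Weil's formula applies without modular corrections.
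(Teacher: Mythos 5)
The paper does not prove this statement at all: it is imported verbatim as a Fact with a citation to Cowling--Martini--M\"{u}ller--Parcet \cite[Proposition 2.2 (iii)]{MR4000236}, so there is no in-paper argument to compare against. Your proposal is essentially the standard proof from that literature (Klein--Russo's Lemma 2.4 in the split case, CMMP in general), and the strategy is sound: slice along $G'$ via Weil's formula, use normality to push the $G'$-components through the section representatives by measure-preserving conjugations, bound each fiber by the $N$-fold Young inequality on $G'$ after Minkowski's integral inequality (valid since $q(P) > 1$), and finish with Young on $G/G'$ applied to $\Phi_1 * \cdots * \Phi_N$. Two points deserve emphasis. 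First, you correctly do the unfolding in a single pass over $(G/G')^{N-1}$ rather than iterating the $N = 2$ case; this matters, because iteration would only give the product of pairwise constants $\tilde{Y}_O ((p_1,p_2), G') \, \tilde{Y}_O ((q(p_1,p_2),p_3), G') \cdots$ rather than $\tilde{Y}_O (P, G')$, which is exactly why the paper remarks that Fact \ref{fact:Cowling-Martini-Muller-Parcet} ``cannot be generalized for $N \geq 3$ only by repeating'' itself. Second, your appeal to unimodularity of all three groups is not merely cosmetic: the hypothesis as stated in the Fact does not force $G/G'$ to be unimodular (there exist unimodular $G$ with closed normal abelian $G'$ and non-unimodular quotient, e.g.\ $\mathbb{R}$ extended by the $ax+b$ group with the module-cancelling action), and unimodularity of $G/G'$ is precisely equivalent to the conjugations $k \mapsto s(\dot{v})^{-1} k s(\dot{v})$ preserving the Haar measure of $G'$, which your argument needs. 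So you should either add unimodularity of $G/G'$ as a standing hypothesis or note that it is implicit in Definition \ref{def:Young-optimal} being applicable to $G/G'$. The remaining caveats (existence of a Borel section, or a section-free formulation of Weil's formula; measurability of $\Phi_k$) are routine.
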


\subsection{A proof of Corollary \ref{cor:Young-Fournier}}
\label{subsec:Young-Fourier-transform is equal to proof}

In this subsection, we show Corollary \ref{cor:Young-Fournier} by Theorem \ref{thm:arrange-convex-inequality}.

\begin{proof}[Proof of Corollary \ref{cor:Young-Fournier}]

  \begin{enumerate}
    \item
          We have $\| \phi_1 * \phi_2 \|_{q (p_1,p_2)} \leq \| \phi_1^* * \phi_2^* \|_{q (p_1,p_2)}$ by applying $f(y) := y^{q (p_1,p_2)}$ to Theorem \ref{thm:arrange-convex-inequality}.
          Since $\| \phi_1^* \|_{p_1} = \| \phi_1 \|_{p_1}$ and $\| \phi_2^* \|_{p_2} = \| \phi_2 \|_{p_2}$, we obtain
          \begin{align*}
            \| \phi_1 * \phi_2 \|_{q (p_1,p_2)}
            \leq \| \phi_1^* * \phi_2^* \|_{q (p_1,p_2)}
            \leq C (p_1,p_2) \| \phi_1^* \|_{p_1} \| \phi_2^* \|_{p_2}
            = C (p_1,p_2) \| \phi_1 \|_{p_1} \| \phi_2 \|_{p_2}
          \end{align*}
          by Fact \ref{fact:Beckner-Brascamp-Lieb} \ref{item:Beckner-Brascamp-Lieb-obverse}.

    \item
          We have $\| \phi_1 * \phi_2 \|_{q (p_1,p_2)} \geq \| \phi_1^* * \phi_2^* \|_{q (p_1,p_2)}$ by applying $f(y) := - y^{q (p_1,p_2)}$ to Theorem \ref{thm:arrange-convex-inequality}.
          Since $\| \phi_1^* \|_{p_1} = \| \phi_1 \|_{p_1}$ and $\| \phi_2^* \|_{p_2} = \| \phi_2 \|_{p_2}$, we obtain
          \begin{align*}
            \| \phi_1 * \phi_2 \|_{q (p_1,p_2)}
            \geq \| \phi_1^* * \phi_2^* \|_{q (p_1,p_2)}
            \geq C (p_1,p_2) \| \phi_1^* \|_{p_1} \| \phi_2^* \|_{p_2}
            = C (p_1,p_2) \| \phi_1 \|_{p_1} \| \phi_2 \|_{p_2}
          \end{align*}
          by Fact \ref{fact:Beckner-Brascamp-Lieb} \ref{item:Beckner-Brascamp-Lieb-reverse}.
          \qedhere

  \end{enumerate}

\end{proof}

\section{The Hausdorff--Young inequality}
\label{sec:Hausdorff-Young}

In this section, we give an upper bound of the optimal constant $H (p,G)$ of the Hausdorff--Young inequality (Corollary \ref{cor:Hausdorff-Young-mG-infinite}).
Klein--Russo proved that $H (p,G)$ is determined only by the optimal constant of Young's inequality when $p' \in 2 \mathbb{Z}$ (Fact \ref{fact:Young-Hausdorff-Young-relate}).
Thus, Corollary \ref{cor:Young-Fournier} bounds $H (p,G)$ from above in this case.

There are several equivalent definitions of $H (p,G)$ (Remark \ref{rem:Hausdorff-Young-represent}).
In this paper, we adopt the definition by Russo (Definition \ref{def:Russo}) based on Kunze's argument \cite[Section 5]{MR100235}.
Let $l_\phi \colon L^2 (G) \to L^2 (G)$ be the convolution operator defined as $l_\phi ( \xi ) := \phi * \xi$ for any function $\phi \colon G \to \mathbb{C}$ on a unimodular locally compact group $G$.
For $\phi \in L^1 (G)$, $l_\phi$ is the bounded operator defined on all elements of $L^2 (G)$ by Fact \ref{fact:Young}.
We define the optimal constant $H (p,G)$ of the Hausdorff--Young inequality as follows.

\begin{definition}[Russo, {\cite[Section 1]{MR435731}}]
  \label{def:Russo}

  We define the constant $H (p,G)$ as
  \begin{align*}
    H (p,G) := \sup \{ \| \gamma \|_2^{2/p'} \mid \gamma \in L^2 (G), \; \exists \phi \in L^1 (G), \; \| \phi \|_p = 1, \; l_\gamma = | l_\phi |^{p'/2} \}
  \end{align*}
  for any unimodular locally compact group $G$ and $1 < p \leq 2$.

\end{definition}

\begin{remark}
  \label{rem:Hausdorff-Young-represent}

  There are some equivalent definitions of $H (p,G)$.

  \begin{enumerate}
    \item \label{item:H-Russo}
          Kunze defined a space $\mathcal{F} L^{p'} (G)$ of operators on $L^2 (G)$ to formulate the Hausdorff--Young inequality (Fact \ref{fact:Hausdorff-Young}) by an argument of the gage space on $L^2 (G)$ by Segal \cite[Section 5]{MR54864}.
          Russo defined $H (p,G)$ as the optimal constant of this inequality (Definition \ref{def:Russo}).
          Haagerup generalized the definition of $\mathcal{F} L^{p'} (G)$ for any von Neumann algebra \cite{MR560633} (see also \cite{MR0355628} and \cite{MR3730047}).

    \item
          When $G$ is of type-I, Lipsman formulated the Hausdorff--Young inequality by integrating the Schatten norm of the Fourier transform on the Plancherel measure \cite[Section 2]{MR425512}.
          Since the $L^2$-Schatten norm of the Fourier transform corresponds to the $L^2$-norm of the Haar measure (Plancherel's theorem), the optimal constant of this inequality is equal to that of \ref{item:H-Russo} \cite[Section 1]{MR435732}.

    \item
          Kosaki defined $L^{p'}(M)$ as the interpolation space between von Neumann algebra $M$ and its predual $M_*$, and proved that $L^{p'}(M)$ corresponds to $\mathcal{F} L^{p'} (G)$  defined by Haagerup \cite{MR735704}.
          When $M$ is the group von Neumann algebra $VN (G)$ of $G$, there is an isometric isomorphism between $M_*$ and the Fourier algebra $A (G) \subset L^\infty (G)$ \cite{MR228628}.
          Thus, $\mathcal{F} L^{p'} (G)$ is isometrically isomorphic to the interpolation space between $VN (G)$ and $A (G) \cong VN(G)_*$ \cite[Section 6]{MR2793447} and hence $H (p,G)$ can be also defined by using this normed space (see also \cite{MR677418}, \cite{MR1484866}, \cite{MR2848998}, and \cite{MR3029493}).

    \item
          Cowling--Martini--M\"{u}ller--Parcet gave $H (p,G)$ explicitly \cite[Proposition 2.1]{MR4000236}:
          \begin{align*}
            H (p,G) = \sup \{ \| | l_\phi |^{p'} \|_{L^1 (G) \to L^\infty (G)}^{1/p'} \mid \| \phi \|_p = 1 \}.
          \end{align*}

  \end{enumerate}

\end{remark}

The following fact is called the Hausdorff--Young inequality.

\begin{fact}[the Hausdorff--Young inequality, {\cite[Theorem 6 (5)]{MR100235}}]
  \label{fact:Hausdorff-Young}

  We have $H (p,G) \leq 1$ for any unimodular locally compact group $G$ and $1 < p \leq 2$.

\end{fact}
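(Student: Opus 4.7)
The plan is to deduce this by Riesz--Thorin complex interpolation in the non-commutative $L^p$-framework on the group von Neumann algebra $VN(G)$, following Kunze and Segal. The map under consideration is $\phi \mapsto l_\phi$, and I would show that it sends $L^p(G)$ into the gage $L^{p'}$-space $\mathcal{F}L^{p'}(G)$ with norm at most $1$; the statement $H(p,G) \leq 1$ is then immediate from the defining identity $l_\gamma = |l_\phi|^{p'/2}$, since this forces $\|\gamma\|_2^{2/p'} = \|l_\phi\|_{\mathcal{F}L^{p'}}$ by the spectral calculus for the gage trace.

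For the endpoints, the case $p=1$, $p' = \infty$ is just the trivial bound $\|l_\phi\|_{L^2 \to L^2} \leq \|\phi\|_1$, which is the content of Fact \ref{fact:Young} applied with $P = (1,2)$ and amounts to $\|l_\phi\|_\infty \leq \|\phi\|_1$ in the operator norm. The case $p = 2$, $p' = 2$ is Plancherel's identity in the gage formulation: the unique normal semifinite trace $\tau$ on $VN(G)$ dual to the Haar measure satisfies $\tau(l_\phi^* l_\phi) = \|\phi\|_2^2$, so the map $\phi \mapsto l_\phi$ is an isometry from $L^2(G)$ onto $L^2(VN(G),\tau)$. Once these two endpoint bounds are in place, the Stein/Kunze complex interpolation theorem for the scale $L^q(VN(G),\tau)$ yields, at the interpolation parameter $\theta = 2/p'$, the desired bound $\|l_\phi\|_{\mathcal{F}L^{p'}} \leq \|\phi\|_p$ for every $1 < p \leq 2$.

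To finish, I would unwind the definition: given $\phi$ with $\|\phi\|_p = 1$ and $\gamma \in L^2(G)$ with $l_\gamma = |l_\phi|^{p'/2}$, the gage trace gives $\|\gamma\|_2^2 = \tau(l_\gamma^* l_\gamma) = \tau(|l_\phi|^{p'}) = \|l_\phi\|_{\mathcal{F}L^{p'}}^{p'}$, so the interpolation bound rearranges to $\|\gamma\|_2^{2/p'} \leq \|\phi\|_p = 1$. Taking the supremum over all admissible $\gamma$ produces $H(p,G) \leq 1$.

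The main obstacle is a foundational one rather than a computational one: setting up the gage $L^q$-spaces on a general unimodular (possibly non-type-I, possibly non-discrete) $VN(G)$ and justifying complex interpolation there, together with the spectral calculus needed to make sense of $|l_\phi|^{p'/2}$ when $l_\phi$ is unbounded. This is handled by Segal's theory of measurable operators affiliated with $(VN(G),\tau)$, which is precisely the machinery Kunze develops; once that is accepted, the interpolation step is formally identical to the classical abelian proof.
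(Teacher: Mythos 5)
Your argument is exactly the one the paper relies on by citation: Fact \ref{fact:Hausdorff-Young} is quoted from Kunze's Theorem 6(5), whose proof is precisely the complex interpolation you describe between the trivial endpoint $\| l_\phi \|_{L^2 \to L^2} \leq \| \phi \|_1$ and the gage-space Plancherel isometry at $p = 2$, carried out in Segal's framework of measurable operators affiliated with $(VN(G),\tau)$. Your unwinding of Russo's definition via $\| \gamma \|_2^2 = \tau ( | l_\phi |^{p'} ) = \| l_\phi \|_{\mathcal{F} L^{p'}}^{p'}$ is also the intended reading, so the proposal is correct and matches the cited proof.
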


For $G = \mathbb{R} / \mathbb{Z}$, Fact \ref{fact:Hausdorff-Young} was proved by Young for particular cases of $p$ \cite[Section 6]{MR1576139} and by Hausdorff for any $1 < p \leq 2$ \cite{MR1544587}.
When $G$ is abelian, Fact \ref{fact:Hausdorff-Young} was proved by Weil \cite{MR0005741}.
There are many works on the estimate of $H (p,G)$ \cite{MR0138939} \cite{MR435731} \cite{MR385456} \cite{MR435732} \cite{MR461034} \cite{MR500308} \cite{MR499945} \cite{MR515223} \cite{MR1178028} \cite{MR1971263} \cite{MR2331133} \cite{MR3176645} \cite{MR4000236} \cite{MR4072208}.

The constant $H(p , G)$ is determined only by $\tilde{Y}_O (P,G)$ for $p' \in 2 \mathbb{Z}$ as follows.

\begin{fact}[Klein--Russo, {\cite[Lemma 2.6]{MR499945}}]
  \label{fact:Young-Hausdorff-Young-relate}

  We have
  \begin{align*}
    H (p,G) = \tilde{Y}_O ((\underbrace{p, p, \cdots , p}_{p'/2}) , G)^{2/p'}
  \end{align*}
  for any unimodular locally compact group $G$ and any $1 < p \leq 2$ with $p' \in 2 \mathbb{Z}$.

\end{fact}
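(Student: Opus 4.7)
The plan is to recast $H(p,G)$ as a Schatten norm with respect to the Plancherel trace $\tau$ on $VN(G)$ and to prove both directions of the identity separately. Write $p' = 2n$ with $n \in \mathbb{Z}_{\geq 1}$ and abbreviate $\tilde{Y}_O := \tilde{Y}_O((\underbrace{p,\dots,p}_{n}), G)$. Since $\tau(l_\psi^* l_\psi) = \|\psi\|_2^2$ for $\psi \in L^2(G)$ (Plancherel), the condition $l_\gamma = |l_\phi|^n$ in Definition \ref{def:Russo} forces $\|\gamma\|_2^2 = \tau(|l_\phi|^{2n})$, so $\|\gamma\|_2^{2/p'}$ equals the Schatten $2n$-norm $\|l_\phi\|_{S^{2n}} := \tau(|l_\phi|^{2n})^{1/(2n)}$, and therefore
\begin{align*}
  H(p,G) = \sup\{ \|l_\phi\|_{S^{2n}} \mid \|\phi\|_p = 1 \}.
\end{align*}

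For the inequality $H(p,G) \leq \tilde{Y}_O^{2/p'}$, set $\tilde\phi(g) := \phi(g^{-1})$; by unimodularity $l_\phi^* = l_{\tilde\phi}$ for $\phi \geq 0$ real, so $\|l_\phi\|_{S^{2n}}^{2n} = \tau((l_\phi^* l_\phi)^n) = (\tilde\phi * \phi)^{*n}(e)$. Since $\tilde\phi * \phi$ is symmetric under $g \mapsto g^{-1}$, one may split $(\tilde\phi * \phi)^{*n}(e) = (\alpha * \tilde\alpha)(e) = \|\alpha\|_2^2$, where $\alpha$ is an $n$-fold convolution of factors from $\{\phi, \tilde\phi\}$ (namely $\alpha = (\tilde\phi * \phi)^{*(n/2)}$ for even $n$ and $\alpha = (\tilde\phi * \phi)^{*((n-1)/2)} * \tilde\phi$ for odd $n$). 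Using $\|\tilde\phi\|_p = \|\phi\|_p$ and the definition of $\tilde{Y}_O$, Young's inequality delivers $\|\alpha\|_2 \leq \tilde{Y}_O \|\phi\|_p^n$, whence $\|l_\phi\|_{S^{2n}} \leq \tilde{Y}_O^{1/n}\|\phi\|_p$.

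For the reverse inequality $\tilde{Y}_O \leq H(p,G)^n$, I would invoke H\"{o}lder's inequality for the non-commutative Schatten classes: since $\tfrac{1}{2} = \sum_{k=1}^n \tfrac{1}{2n}$, one has $\|T_1 \cdots T_n\|_{S^2} \leq \prod_{k=1}^n \|T_k\|_{S^{2n}}$ for any operators on $L^2(G)$. Taking $T_k = l_{\phi_k}$, using $l_{\phi_1} \cdots l_{\phi_n} = l_{\phi_1 * \cdots * \phi_n}$, and applying Plancherel's identity $\|l_\psi\|_{S^2} = \|\psi\|_2$ yields
\begin{align*}
  \|\phi_1 * \cdots * \phi_n\|_2 \leq \prod_{k=1}^n \|l_{\phi_k}\|_{S^{2n}} \leq H(p,G)^n \prod_{k=1}^n \|\phi_k\|_p,
\end{align*}
and passing to the supremum over $(\phi_1, \ldots, \phi_n) \in \tilde{\mathcal{B}}$ completes the argument.

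The main difficulty is operator-algebraic bookkeeping rather than any sharp analytic estimate: one must verify that $|l_\phi|^n$ is a well-defined (possibly unbounded) operator affiliated with $VN(G)$ whose $\tau$-trace behaves as claimed, and that the identity $l_\phi^* l_\phi = l_{\tilde\phi * \phi}$ extends to the full domain relevant to the sup defining $H(p,G)$. The hypothesis $p' \in 2\mathbb{Z}$ is essential precisely because only for even $p'$ is $|l_\phi|^{p'}$ a plain convolution operator with kernel $(\tilde\phi * \phi)^{*n}$, allowing the entire argument to reduce to algebraic manipulations of convolutions and avoid any functional calculus on fractional powers.
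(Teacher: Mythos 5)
The paper does not prove this statement: it is recorded as a \emph{Fact} and attributed to Klein--Russo \cite[Lemma 2.6]{MR499945}, so there is no in-paper proof to compare against. Your argument is, in outline, a correct reconstruction of the Klein--Russo proof: the identification $\|\gamma\|_2^{2/p'}=\tau(|l_\phi|^{2n})^{1/(2n)}$ via the Plancherel trace, the computation $\tau((l_\phi^* l_\phi)^n)=(\tilde\phi*\phi)^{*n}(e)=\|\alpha\|_2^2$ with $\alpha$ an $n$-fold convolution of factors from $\{\phi,\tilde\phi\}$ (using $q(p,\dots,p)=2$, which is exactly why $\tilde Y_O$ controls the $L^2$-norm of $\alpha$), and the reverse bound via noncommutative H\"older $\|T_1\cdots T_n\|_{S^2}\le\prod_k\|T_k\|_{S^{2n}}$ together with $\|l_\psi\|_{S^2}=\|\psi\|_2$. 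Two small points deserve explicit care beyond the bookkeeping you already flag: in the upper bound, Definition \ref{def:Russo} allows complex $\phi\in L^1$, so you need the pointwise domination $|(\tilde{\bar\phi}*\phi)^{*n}(e)|\le(\widetilde{|\phi|}*|\phi|)^{*n}(e)$ to reduce to the nonnegative functions over which $\tilde Y_O$ is defined; and in the lower bound, the elements of $\tilde{\mathcal B}$ are only assumed to lie in $L^{p}$, not $L^1$, so a truncation and monotone-convergence step is needed before $\|l_{\phi_k}\|_{S^{2n}}\le H(p,G)\|\phi_k\|_p$ applies. Both are routine and do not affect the validity of the approach.
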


The following statement holds by Corollary \ref{cor:Young-Fournier-N-general} \ref{item:Young-Fournier-N-general-obverse} and Fact \ref{fact:Young-Hausdorff-Young-relate}.

\begin{corollary}
  \label{cor:Hausdorff-Young-mG-infinite}

  We have $H (p,G) \leq B (p)$ for any unimodular locally compact group $G$ with $m (G) = \infty$ and any $1 < p \leq 2$ with $p' \in 2 \mathbb{Z}$.

\end{corollary}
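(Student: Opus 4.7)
The plan is to reduce the statement to an arithmetic evaluation by combining the two ingredients already in place in the excerpt: Fact \ref{fact:Young-Hausdorff-Young-relate} of Klein--Russo, which expresses $H(p,G)$ as a power of a diagonal Young optimal constant, and Corollary \ref{cor:Young-Fournier-N-general} \ref{item:Young-Fournier-N-general-obverse}, which bounds that constant by $C(P)$ whenever $m(G) = \infty$ and $P \in \mathbb{R}_{>1}^N$.

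Concretely, I would set $N := p'/2 \in \mathbb{Z}_{\geq 1}$ (legitimate by the hypothesis $p' \in 2\mathbb{Z}$ together with $1 < p \leq 2$) and take $P := (\underbrace{p, p, \ldots, p}_{N}) \in \mathbb{R}_{> 1}^N$. Applying Corollary \ref{cor:Young-Fournier-N-general} \ref{item:Young-Fournier-N-general-obverse} and then Fact \ref{fact:Young-Hausdorff-Young-relate} yields
\begin{align*}
    H(p,G) = \tilde{Y}_O(P,G)^{2/p'} \leq C(P)^{2/p'},
\end{align*}
so the problem collapses to showing $C(P)^{2/p'} = B(p)$.

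For that, I would substitute directly into the definition of $q(P)$:
\begin{align*}
    \frac{1}{q(P)} = 1 - \frac{p'}{2} + \frac{p'}{2p} = 1 - \frac{p'}{2}\left(1 - \frac{1}{p}\right) = 1 - \frac{p'}{2}\cdot\frac{1}{p'} = \frac{1}{2},
\end{align*}
hence $q(P) = 2$ and $B \circ q(P) = B(2) = 2^{1/4}/2^{1/4} = 1$. The definition of $C(P)$ then gives $C(P) = B(p)^{p'/2}$, and raising to the power $2/p'$ produces $B(p)$, which is the desired bound.

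There is essentially no obstacle here: the substantive work lives in Fact \ref{fact:Young-Hausdorff-Young-relate} and Corollary \ref{cor:Young-Fournier-N-general}, and beyond these the proof is a two-line algebraic identity forcing $q(P) = 2$. The only hypothesis check worth flagging is that $p'/2$ be a positive integer, which is precisely the assumption $p' \in 2\mathbb{Z}$ in the statement; this is needed both to form the tuple $P$ and to apply Klein--Russo's identity.
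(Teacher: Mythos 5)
Your proposal is correct and follows exactly the paper's own proof: apply Fact \ref{fact:Young-Hausdorff-Young-relate} with $P = (p,\ldots,p) \in \mathbb{R}^{p'/2}$, bound $\tilde{Y}_O(P,G) \leq C(P)$ via Corollary \ref{cor:Young-Fournier-N-general} \ref{item:Young-Fournier-N-general-obverse}, and compute $q(P) = 2$, $B(2) = 1$, so $C(P)^{2/p'} = B(p)$. No gaps; the hypothesis check that $p'/2 \in \mathbb{Z}_{\geq 1}$ is the right thing to flag.
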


When $G$ is separable and any irreducible unitary representation of $G$ is finite dimensional, Corollary \ref{cor:Hausdorff-Young-mG-infinite} was proved by Russo \cite[Theorem 3 (b)]{MR515223}.

\begin{proof}[Proof of Corollary \ref{cor:Hausdorff-Young-mG-infinite}]

  We have $H (p,G) = \tilde{Y}_O (P,G)^{2/p'}$ for $P := (p , p , \cdots p) \in \mathbb{R}^{p'/2}$ by Fact \ref{fact:Young-Hausdorff-Young-relate}.
  Since $m (G) = \infty$, we have $\tilde{Y}_O (P,G) \leq C(P)$ by Corollary \ref{cor:Young-Fournier-N-general} \ref{item:Young-Fournier-N-general-obverse}.
  Here
  \begin{align*}
    q (P) = \left( 1 - \frac{p'}{2} + \frac{p'}{2} \cdot \frac{1}{p} \right)^{-1} = 2
  \end{align*}
  holds and hence we have
  \begin{align*}
    C (P) = \frac{B (p)^{p'/2}}{B (2)} = B (p)^{p'/2}
  \end{align*}
  by $B (2) = 1$.
  Thus,
  \begin{align*}
    H (p,G)
    = \tilde{Y}_O (P,G)^{2/p'}
    \leq C(P)^{2/p'}
    = B(p)
  \end{align*}
  is obtained.
\end{proof}

\section{The necessity of the convexity of \texorpdfstring{$f$}{f}}
\label{sec:example-convex}

In this section, we see the necessity of the convexity of $f$ for $G = \mathbb{R}$.
That is, the following example implies that a function $f$ satisfying Theorem \ref{thm:arrange-convex-inequality} needs to be a convex function.
In the case of
\begin{align*}
  \phi_1 := \frac{1_{(-1,1)}}{2} , \quad
  \phi_2 := y_1 1_{(-5,-3) \cup (-1-2\lambda ,-1) \cup (1,1+2\lambda ) \cup (3,5)} + y_2 1_{(-3,-1-2\lambda ) \cup (-1,1) \cup (1+2\lambda ,3)}
\end{align*}
for $0 \leq \lambda \leq 1$ and $0 \leq y_1 \leq y_2$, we have
\begin{align*}
  \phi_1^* = \frac{1_{(-1,1)}}{2} , \quad
  \phi_2^* = y_1 1_{(-5,2\lambda -3) \cup (3-2\lambda ,5)} + y_2 1_{(2\lambda -3,3-2\lambda )}
\end{align*}
(Figure \ref{fig:phi1}, Figure \ref{fig:phi2}, and Figure \ref{fig:phi2*}).
Thus,
\begin{align*}
  \phi_1*\phi_2 (x) =
  \left\{
  \begin{aligned}
     & y_2-\frac{y_2-y_1}{2}|x|            &  & \text{if} \; |x| \leq 2\lambda                     \\
     & \lambda y_1+(1-\lambda )y_2         &  & \text{if} \; 2\lambda  \leq |x| \leq 2 \lambda + 2 \\
     & 2y_2-y_1-\frac{y_2-y_1}{2}|x|       &  & \text{if} \; 2\lambda +2 \leq |x| \leq 4           \\
     & y_1\left( 3 - \frac{|x|}{2} \right) &  & \text{if} \; 4 \leq |x| \leq 6                     \\
     & 0                                   &  & \text{if} \; 6 \leq |x|
  \end{aligned}
  \right.
\end{align*}
(Figure \ref{fig:convolution}) and
\begin{align*}
  \phi_1^* * \phi_2^* (x) =
  \left\{
  \begin{aligned}
     & y_2                                                  &  & \text{if} \; |x| \leq 2-2\lambda                  \\
     & (\lambda -1)y_1+(2-\lambda )y_2-\frac{y_2-y_1}{2}|x| &  & \text{if} \; 2-2\lambda  \leq |x| \leq 4-2\lambda \\
     & y_1                                                  &  & \text{if} \; 4-2\lambda  \leq |x| \leq 4          \\
     & y_1\left( 3 - \frac{|x|}{2} \right)                  &  & \text{if} \; 4 \leq |x| \leq 6                    \\
     & 0                                                    &  & \text{if} \; 6 \leq |x|
  \end{aligned}
  \right.
\end{align*}
(Figure \ref{fig:rearrange}) hold and hence
\begin{align*}
  \int_\mathbb{R} f\circ( \phi_1^* * \phi_2^* ) (x) dx - \int_\mathbb{R} f\circ( \phi_1*\phi_2 ) (x) dx
  = 4 (\lambda f(y_1)+(1-\lambda )f(y_2)- f(\lambda y_1+(1-\lambda )y_2) ).
\end{align*}
Since
\begin{align*}
  \lambda f(y_1)+(1-\lambda )f(y_2)- f(\lambda y_1+(1-\lambda )y_2)
  \geq 0
\end{align*}
by the convexity of $f$, we obtain \eqref{eq:arrange-convex-inequality-main}.

This example shows that a measurable function $f \colon \mathbb{R}_{\geq 0} \to \mathbb{R}$ with $f (0) = 0$ satisfies Theorem \ref{thm:arrange-convex-inequality} only if $f$ is a convex function.

\begin{figure}
  \centering
  \caption{$\phi_1 (x) = \phi_1^* (x)$}
  \label{fig:phi1}
  \begin{tikzpicture}
    \begin{axis}
      [
        width=\textwidth,
        scale only axis,
        axis x line=center,
        axis y line=center,
        xmin=-6.3,
        xmax=6.3,
        ymin=-0.5,
        ymax=3.5,
        xlabel={$x$},
        xtick={-1,1},
        ytick=\empty,
        xticklabels={$-1$,$1$},
        axis equal image=true,
      ]
      \draw [thick] (-1,1/2) -- (1,1/2);
      \draw [thick] (-6.3,0) -- (-1,0);
      \draw [thick] (1,0) -- (6.3,0);
      \draw [dotted] (-1,0) -- (-1,1/2);
      \draw [dotted] (1,0) -- (1,1/2);
      \node at (0,1/2) [above left] {$\displaystyle \frac{1}{2}$};
    \end{axis}
  \end{tikzpicture}
\end{figure}

\begin{figure}
  \centering
  \caption{$\phi_2 (x)$}
  \label{fig:phi2}
  \begin{tikzpicture}
    \begin{axis}
      [
        width=\textwidth,
        scale only axis,
        axis x line=center,
        axis y line=center,
        xmin=-6.3,
        xmax=6.3,
        ymin=-0.5,
        ymax=3.5,
        xlabel={$x$},
        xtick={-5,-3,-7/3,-1,1,7/3,3,5},
        ytick=\empty,
        xticklabels={$-5$,$-3$,\; $-1-2\lambda$,$-1$,$1$,$1+2 \lambda$,$3$,$5$},
        axis equal image=true,
      ]
      \draw [thick] (-5,1) -- (-3,1);
      \draw [thick] (-7/3,1) -- (-1,1);
      \draw [thick] (1,1) -- (7/3,1);
      \draw [thick] (3,1) -- (5,1);
      \draw [thick] (-3,3) -- (-7/3,3);
      \draw [thick] (-1,3) -- (1,3);
      \draw [thick] (7/3,3) -- (3,3);
      \draw [thick] (-6.3,0) -- (-5,0);
      \draw [thick] (5,0) -- (6.3,0);
      \draw [dotted] (-5,0) -- (-5,1) -- (5,1) -- (5,0);
      \draw [dotted] (-3,0) -- (-3,3) -- (3,3) -- (3,0);
      \draw [dotted] (-7/3,0) -- (-7/3,3);
      \draw [dotted] (-1,0) -- (-1,3);
      \draw [dotted] (1,0) -- (1,3);
      \draw [dotted] (7/3,0) -- (7/3,3);
      \node at (0,1) [above left] {$y_1$};
      \node at (0,3) [above left] {$y_2$};
    \end{axis}
  \end{tikzpicture}
\end{figure}

\begin{figure}
  \centering
  \caption{$\phi_2^* (x)$}
  \label{fig:phi2*}
  \begin{tikzpicture}
    \begin{axis}
      [
        width=\textwidth,
        scale only axis,
        axis x line=center,
        axis y line=center,
        xmin=-6.3,
        xmax=6.3,
        ymin=-0.5,
        ymax=3.5,
        xlabel={$x$},
        xtick={-5,-5/3,5/3,5},
        ytick=\empty,
        xticklabels={$-5$,$2 \lambda -3$,$3 - 2 \lambda$,$5$},
        axis equal image=true,
      ]
      \draw [thick] (-5,1) -- (-5/3,1);
      \draw [thick] (5/3,1) -- (5,1);
      \draw [thick] (-5/3,3) -- (5/3,3);
      \draw [thick] (-6.3,0) -- (-5,0);
      \draw [thick] (5,0) -- (6.3,0);
      \draw [dotted] (-5,0) -- (-5,1) -- (5,1) -- (5,0);
      \draw [dotted] (-5/3,0) -- (-5/3,3) -- (5/3,3) -- (5/3,0);
      \node at (0,1) [above left] {$y_1$};
      \node at (0,3) [above left] {$y_2$};
    \end{axis}
  \end{tikzpicture}
\end{figure}

\begin{figure}
  \centering
  \caption{$\phi_1 * \phi_2 (x)$}
  \label{fig:convolution}
  \begin{tikzpicture}
    \begin{axis}
      [
        width=\textwidth,
        scale only axis,
        axis x line=center,
        axis y line=center,
        xmin=-6.3,
        xmax=6.3,
        ymin=-0.5,
        ymax=3.5,
        xlabel={$x$},
        xtick={-6,-4,-10/3,-4/3,4/3,10/3,4,6},
        ytick=\empty,
        xticklabels={$-6$,$-4$,\; $-2 - 2 \lambda$,$-2 \lambda$,$2 \lambda$,$2 + 2 \lambda$,$4$,$6$},
        axis equal image=true,
      ]
      \draw [thick] (-6.3,0) -- (-6,0) -- (-4,1) -- (-10/3,5/3) -- (-4/3,5/3) -- (0,3) -- (4/3,5/3) -- (10/3,5/3) -- (4,1) --(6,0) -- (6.3,0);
      \draw [dotted] (-4,0) -- (-4,1) -- (4,1) -- (4,0);
      \draw [dotted] (-10/3,0) -- (-10/3,5/3) -- (10/3,5/3) -- (10/3,0);
      \draw [dotted] (-4/3,0) -- (-4/3,5/3);
      \draw [dotted] (4/3,0) -- (4/3,5/3);
      \node at (0,1) [below left] {$y_1$};
      \node at (0,5/3) [below left] {$\lambda y_1 + (1 - \lambda) y_2$};
      \node at (0,3) [above left] {$y_2$};
    \end{axis}
  \end{tikzpicture}
\end{figure}

\begin{figure}
  \centering
  \caption{$\phi_1^* * \phi_2^* (x)$}
  \label{fig:rearrange}
  \begin{tikzpicture}
    \begin{axis}
      [
        width=\textwidth,
        scale only axis,
        axis x line=center,
        axis y line=center,
        xmin=-6.3,
        xmax=6.3,
        ymin=-0.5,
        ymax=3.5,
        xlabel={$x$},
        xtick={-6,-4,-8/3,-2/3,2/3,8/3,4,6},
        ytick=\empty,
        xticklabels={$-6$,$-4$,$2 \lambda - 4$,$2 \lambda - 2$,$2 - 2 \lambda$,$4 - 2 \lambda$,$4$,$6$},
        axis equal image=true,
      ]
      \draw [thick] (-6.3,0) -- (-6,0) -- (-4,1) -- (-8/3,1) -- (-2/3,3) -- (2/3,3) -- (8/3,1) -- (4,1) --(6,0) -- (6.3,0);
      \draw [dotted] (-4,0) -- (-4,1) -- (4,1) -- (4,0);
      \draw [dotted] (-8/3,0) -- (-8/3,1);
      \draw [dotted] (8/3,0) -- (8/3,1);
      \draw [dotted] (-2/3,0) -- (-2/3,3);
      \draw [dotted] (2/3,0) -- (2/3,3);
      \node at (0,1) [above left] {$y_1$};
      \node at (0,3) [above left] {$y_2$};
    \end{axis}
  \end{tikzpicture}
\end{figure}

\section{Some properties of the rearrangement}
\label{sec:rearrange}

In this section, we show some essential properties of the rearrangement and the layer cake representation to prove Theorem \ref{thm:arrange-convex-inequality}.

\begin{lemma}
  \label{lem:rearrange-property}

  Let $\phi \colon G \to \mathbb{R}_{\geq 0}$ be a measurable function on a measure space $(G , \mu )$.

  \begin{enumerate}
    \item \label{item:rearrange-property-interval}
          We have
          \begin{align*}
            (\phi^*)^{-1} (\mathbb{R}_{> t})
            = \left( - \frac{\mu (\phi^{-1} ( \mathbb{R}_{> t}))}{2}, \frac{\mu (\phi^{-1} ( \mathbb{R}_{> t}))}{2} \right)
          \end{align*}
          for any $t \geq 0$.

    \item \label{item:rearrange-property-character}
          We have
          \begin{align*}
            1_A^{-1} ( \mathbb{R}_{> t} ) =
            \left\{
            \begin{aligned}
               & A         &  & \text{if} \; t < 1    \\
               & \emptyset &  & \text{if} \; t \geq 1
            \end{aligned}
            \right.
          \end{align*}
          for any $t \geq 0$ and subset $A \subset G$.

    \item \label{item:rearrange-property-order}
          We have
          \begin{align*}
            1_{\phi^{-1} (\mathbb{R}_{> t})}^*
            = 1_{(\phi^*)^{-1} (\mathbb{R}_{> t})}
            = 1_{( - \mu ( \phi^{-1} ( \mathbb{R}_{> t} ) ) / 2 , \mu ( \phi^{-1} ( \mathbb{R}_{> t} ) ) / 2 )}
          \end{align*}
          for any $t \geq 0$.

    \item \label{item:rearrange-property-layer}
          We have
          \begin{align*}
            \phi (g) = \int_{0}^{\infty} 1_{\phi^{-1} (\mathbb{R}_{> t})} (g) dt
          \end{align*}
          for any $g \in G$.

    \item \label{item:rearrange-property-convolution}
          If $\mu$ is a Haar measure on a unimodular locally compact group $G$, then we have
          \begin{align*}
            \phi_1 * \phi_2 (g)
            = \int_{0}^{\infty} \int_{0}^{\infty} 1_{L_1 (t_1)} * 1_{L_2 (t_2)} (g) dt_1 dt_2
          \end{align*}
          for any $g \in G$ and measurable functions $\phi_1 , \phi_2 \colon G \to \mathbb{R}_{\geq 0}$, where we write $L_1 (t) := \phi_1^{-1} (\mathbb{R}_{> t})$ and $L_2 (t) := \phi_2^{-1} (\mathbb{R}_{> t})$.

    \item \label{item:rearrange-property-increase}
          If a pointwise increasing sequence of functions $\phi_n \colon G \to \mathbb{R}_{\geq 0}$ converges pointwise to $\phi$, then $\phi_n^*$ is a pointwise increasing sequence converging pointwise to $\phi^*$.

  \end{enumerate}

\end{lemma}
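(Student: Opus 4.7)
The plan is to prove the six items in the order given, since each one feeds into the next. The central technical step is item (1), which converts the infimum-type definition of the rearrangement into the explicit interval description of its super-level sets. Once (1) is established, items (2) and (3) are essentially bookkeeping, while items (4)--(6) reduce to the layer cake formula and standard monotone-convergence manipulations.

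For (1), I would introduce the distribution function $d(t) := \mu(\phi^{-1}(\mathbb{R}_{>t}))$. By continuity of $\mu$ from below, $d$ is non-increasing in $t$ (in fact right-continuous). Unfolding Definition \ref{def:rearrange}, the inequality $\phi^*(x) > t$ says precisely that every $s \geq 0$ with $d(s) \leq 2|x|$ satisfies $s > t$; using that $d$ is non-increasing, this is equivalent to $d(t) > 2|x|$. Hence $(\phi^*)^{-1}(\mathbb{R}_{>t}) = \{x \in \mathbb{R} : 2|x| < d(t)\}$, which is the open interval asserted. I expect the main subtlety to be the careful handling of strict versus weak inequalities around the infimum, including the degenerate case $d(t) = 0$ in which both sides are empty.

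Item (2) is immediate from the definition of the indicator. For (3), I apply (1) to the function $1_{\phi^{-1}(\mathbb{R}_{>t})}$: by (2), its super-level set at height $s$ equals $\phi^{-1}(\mathbb{R}_{>t})$ when $s < 1$ and is empty when $s \geq 1$, so the interval produced by (1) coincides with the one on the right-hand side of (3), and all three expressions agree. For (4), I fix $g \in G$ and note that the integrand $1_{\phi^{-1}(\mathbb{R}_{>t})}(g)$ equals $1$ exactly when $t < \phi(g)$ and $0$ otherwise, so integration in $t$ yields $\phi(g)$.

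For (5), I would substitute the layer cake formula (4) into both factors of $\phi_1(g_1)\phi_2(g_1^{-1}g)$ and apply Tonelli's theorem, using that all integrands are nonnegative, to pull the $t_1$ and $t_2$ integrals outside the integral over $g_1$; the remaining inner integral in $g_1$ is exactly $1_{L_1(t_1)} * 1_{L_2(t_2)}(g)$. Finally, for (6), $\phi_n \uparrow \phi$ pointwise implies $\phi_n^{-1}(\mathbb{R}_{>t}) \uparrow \phi^{-1}(\mathbb{R}_{>t})$ for each $t$, so by continuity of $\mu$ from below the associated distribution functions $d_n(t)$ increase to $d(t)$. Part (1) then forces the intervals $(\phi_n^*)^{-1}(\mathbb{R}_{>t})$ to increase to $(\phi^*)^{-1}(\mathbb{R}_{>t})$; consequently $\phi_n^*(x) \uparrow \phi^*(x)$ at every $x$, as can be seen either directly from the equivalence $\phi_n^*(x) > t \iff x \in (\phi_n^*)^{-1}(\mathbb{R}_{>t})$, or by applying the layer cake (4) to each $\phi_n^*$ together with the monotone convergence theorem.
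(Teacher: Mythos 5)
Your proposal is correct and follows essentially the same route as the paper: the same chain of equivalences for (1) (you are in fact slightly more careful than the paper about the strict-versus-weak inequality at the infimum, which is resolved by right-continuity of the distribution function), the same reduction of (3) to (1) and (2), the layer cake computation for (4), Tonelli for (5), and monotonicity of super-level sets plus continuity of $\mu$ from below for (6), which is just a reformulation of the paper's $\epsilon$-argument. No gaps.
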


\begin{proof}

  \begin{enumerate}
    \item
          Since
          \begin{align*}
            x \in (\phi^*)^{-1} (\mathbb{R}_{> t})
             & \Longleftrightarrow \phi^* (x) > t                                                                                                      \\
             & \Longleftrightarrow \inf \{ t' > 0 \mid \mu ( \phi^{-1} ( \mathbb{R}_{> t'} ) ) \leq 2 |x| \} > t                                       \\
             & \Longleftrightarrow \mu ( \phi^{-1} ( \mathbb{R}_{> t} ) ) > 2 |x|                                                                      \\
             & \Longleftrightarrow x \in \left( - \frac{\mu (\phi^{-1} ( \mathbb{R}_{> t}))}{2}, \frac{\mu (\phi^{-1} ( \mathbb{R}_{> t}))}{2} \right)
          \end{align*}
          for any $x \in \mathbb{R}$, we obtain
          \begin{align*}
            (\phi^*)^{-1} (\mathbb{R}_{> t}) = \left( - \frac{\mu (\phi^{-1} ( \mathbb{R}_{> t}))}{2}, \frac{\mu (\phi^{-1} ( \mathbb{R}_{> t}))}{2} \right).
          \end{align*}

    \item
          We have
          \begin{align*}
            1_A^{-1} ( \mathbb{R}_{> t})
            = \{ g \in G \mid 1_A (g) > t \}
            = \left\{
            \begin{aligned}
               & A         &  & \text{if} \; t < 1    \\
               & \emptyset &  & \text{if} \; t \geq 1
            \end{aligned}
            \right. .
          \end{align*}

    \item
          Since
          \begin{align*}
            1_{(\phi^*)^{-1} (\mathbb{R}_{> t})}
            = 1_{( - \mu ( \phi^{-1} ( \mathbb{R}_{> t} ) ) / 2 , \mu ( \phi^{-1} ( \mathbb{R}_{> t} ) ) / 2 )}
          \end{align*}
          by \ref{item:rearrange-property-interval}, it suffices to show
          \begin{align}
            1_{\phi^{-1} (\mathbb{R}_{> t})}^*
            = 1_{( - \mu ( \phi^{-1} ( \mathbb{R}_{> t} ) ) / 2 , \mu ( \phi^{-1} ( \mathbb{R}_{> t} ) ) / 2 )}. \label{eq:character-rearrange}
          \end{align}
          We have
          \begin{align*}
            1_{\phi^{-1} (\mathbb{R}_{> t})}^* (x)
            = \inf \{ t' > 0 \mid \mu ( 1_{\phi^{-1} (\mathbb{R}_{> t})}^{-1} ( \mathbb{R}_{> t'})) \leq 2 |x| \}
          \end{align*}
          for any $x \in \mathbb{R}$.
          Since
          \begin{align*}
            \mu ( 1_{\phi^{-1} (\mathbb{R}_{> t})}^{-1} ( \mathbb{R}_{> t'}))
            = \left\{
            \begin{aligned}
               & \mu ( \phi^{-1} (\mathbb{R}_{> t} ) ) &  & \text{if} \; t' < 1    \\
               & 0                                     &  & \text{if} \; t' \geq 1
            \end{aligned}
            \right.
          \end{align*}
          by \ref{item:rearrange-property-character}, we have
          \begin{align*}
            \inf \{ t' \geq 0 \mid \mu ( 1_{\phi^{-1} (\mathbb{R}_{> t})}^{-1} ( \mathbb{R}_{> t'})) \leq 2 |x| \}
             & = \left\{
            \begin{aligned}
               & 1 &  & \text{if} \; 2|x| < \mu ( \phi^{-1} ( \mathbb{R}_{> t}))    \\
               & 0 &  & \text{if} \; 2|x| \geq \mu ( \phi^{-1} ( \mathbb{R}_{> t}))
            \end{aligned}
            \right.                                                                                                     \\
             & = 1_{( - \mu ( \phi^{-1} ( \mathbb{R}_{> t} ) ) / 2 , \mu ( \phi^{-1} ( \mathbb{R}_{> t} ) ) / 2 )} (x).
          \end{align*}
          Thus, we obtain \eqref{eq:character-rearrange}.

    \item
          We obtain
          \begin{align*}
            \phi (g)
            = \int_{0}^{\phi (g)} dt
            = \int_{0}^{\infty} 1_{\phi^{-1} (\mathbb{R}_{> t})} (g) dt.
          \end{align*}

    \item
          We have
          \begin{align*}
            \phi_1 * \phi_2 (g)
            = \int_{G}^{} \phi_1 (g') \phi_2 (g'^{-1} g) dg'.
          \end{align*}
          Since
          \begin{align*}
            \phi_1 (g') = \int_{0}^{\infty} 1_{L_1 (t_1)} (g') dt_1, \quad
            \phi_2 (g'^{-1} g) = \int_{0}^{\infty} 1_{L_2 (t_2)} (g'^{-1} g) dt_2
          \end{align*}
          hold by \ref{item:rearrange-property-layer}, we have
          \begin{align*}
            \int_{G}^{} \phi_1 (g') \phi_2 (g'^{-1} g) dg'
            = \int_{G}^{} \int_{0}^{\infty} 1_{L_1 (t_1)} (g') dt_1 \int_{0}^{\infty} 1_{L_2 (t_2)} (g'^{-1} g) dt_2 dg'.
          \end{align*}
          We get
          \begin{align*}
            \int_{G}^{} \int_{0}^{\infty} 1_{L_1 (t_1)} (g') dt_1 \int_{0}^{\infty} 1_{L_2 (t_2)} (g'^{-1} g) dt_2 dg'
            = \int_{0}^{\infty} \int_{0}^{\infty} \int_{G}^{} 1_{L_1 (t_1)} (g') 1_{L_2 (t_2)} (g'^{-1} g) dg' dt_1 dt_2
          \end{align*}
          by Fubini's theorem.
          Since
          \begin{align*}
            \int_{G}^{} 1_{L_1 (t_1)} (g') 1_{L_2 (t_2)} (g'^{-1} g) dg'
            = 1_{L_1 (t_1)} * 1_{L_2 (t_2)} (g),
          \end{align*}
          we obtain
          \begin{align*}
            \phi_1 * \phi_2 (g)
            = \int_{0}^{\infty} \int_{0}^{\infty} \int_{G}^{} 1_{L_1 (t_1)} (g') 1_{L_2 (t_2)} (g'^{-1} g) dg' dt_1 dt_2
            = \int_{0}^{\infty} \int_{0}^{\infty} 1_{L_1 (t_1)} * 1_{L_2 (t_2)} (g) dt_1 dt_2.
          \end{align*}

    \item
          Since $\phi_n^{-1} (\mathbb{R}_{> t}) \subset \phi_{n+1}^{-1} (\mathbb{R}_{> t}) \subset \phi^{-1} (\mathbb{R}_{> t})$ for any $n$, we have $\phi_n^* \leq \phi_{n+1}^* \leq \phi^*$.

          Here $\mu ( \phi^{-1} (\mathbb{R}_{> \phi^* (x) - \epsilon})) > 2 |x|$ holds for any $x \in \mathbb{R}$ and $\epsilon > 0$.
          We fix $x \in \mathbb{R}$ and $\epsilon > 0$.
          Since the pointwise increasing sequence $\phi_n$ converges pointwise to $\phi$, there exists (sufficiently large) $n$ with $\mu ( \phi_n^{-1} (\mathbb{R}_{> \phi^* (x) - \epsilon})) > 2 |x|$.
          Thus,
          \begin{align*}
            \phi^* (x) - \epsilon \leq \phi_n^* (x) \leq \phi^* (x)
          \end{align*}
          holds.
          Therefore, we have
          \begin{align*}
            \lim_{n \to \infty} \phi_n^* (x) = \phi^* (x)
          \end{align*}
          and hence $\phi_n^*$ is a pointwise increasing sequence converging pointwise to $\phi^*$.
          \qedhere

  \end{enumerate}

\end{proof}

\begin{example}
  \label{ex:rearrange-layer}

  \begin{enumerate}
    \item \label{item:rearrange-layer-convolution}
          Let $\mu$, $G$, $\phi_1$, $\phi_2$, $L_1$, and $L_2$ be as in Lemma \ref{lem:rearrange-property} \ref{item:rearrange-property-convolution}.
          We write
          \begin{align*}
            J_1 (t) := \frac{\mu ( L_1 (t) )}{2}, \quad
            J_2 (t) := \frac{\mu ( L_2 (t) )}{2}
          \end{align*}
          for $t \geq 0$.
          Since
          \begin{align}
            1_{L_1 (t)}^*
            = 1_{( \phi_1^* )^{-1} ( \mathbb{R}_{> t})}
            = 1_{( - J_1 (t) , J_1 (t) )}, \quad
            1_{L_2 (t)}^*
            = 1_{( \phi_2^* )^{-1} ( \mathbb{R}_{> t})}
            = 1_{( - J_2 (t) , J_2 (t) )} \label{eq:phi1-phi2-layer}
          \end{align}
          by Lemma \ref{lem:rearrange-property} \ref{item:rearrange-property-order}, we have
          \begin{align}
            \phi_1^* * \phi_2^* (x)
            = \int_{0}^{\infty} \int_{0}^{\infty} 1_{(-J_1 (t_1) , J_1 (t_1))} * 1_{(-J_2 (t_2) , J_2 (t_2))} (x) dt_1 dt_2 \label{eq:rearrange-layer-convolution-represent}
          \end{align}
          for any $x \in \mathbb{R}$ by Lemma \ref{lem:rearrange-property} \ref{item:rearrange-property-convolution}.
          For any $J_1 , J_2 \geq 0$,
          \begin{align*}
            1_{(-J_1, J_1)} * 1_{(-J_2, J_2)} (x)
            = \left\{
            \begin{aligned}
               & 2 \min (J_1,J_2) &  & \text{if} \; |x| \leq |J_1 - J_2|                \\
               & J_1 + J_2 - |x|  &  & \text{if} \; |J_1 - J_2| \leq |x| \leq J_1 + J_2 \\
               & 0                &  & \text{if} \; J_1 + J_2 \leq |x|
            \end{aligned}
            \right.
          \end{align*}
          is a continuous even function and it is decreasing on $x \geq 0$.
          Thus, $\phi_1^* * \phi_2^* (x)$ is also an even function and it is decreasing on $x \geq 0$.

    \item \label{item:rearrange-layer-increase}
          In \ref{item:rearrange-layer-convolution}, there exist pointwise increasing sequences $\phi_{1,n}$ and $\phi_{2,n}$ of measurable functions such that $\phi_{1,n}$ and $\phi_{2,n}$ converge pointwise to $\phi_1$ and $\phi_2$, respectively.
          Thus, $\phi_{1,n}^*$, $\phi_{2,n}^*$, $\phi_{1,n} * \phi_{2,n}$, and $\phi_{1,n}^* * \phi_{2,n}^*$ are pointwise increasing sequences of measurable functions which converge to $\phi_1^*$, $\phi_2^*$, $\phi_1 * \phi_2$ and $\phi_1^* * \phi_2^*$, respectively, by Lemma \ref{lem:rearrange-property} \ref{item:rearrange-property-increase}.

  \end{enumerate}

\end{example}

In the case of Example \ref{ex:rearrange-layer} \ref{item:rearrange-layer-convolution}, the following lemma holds when $\phi_1^* * \phi_2^* (x)$ is finite almost everywhere.

\begin{lemma}
  \label{lem:convolution-almost-finite}

  Suppose measurable functions $\phi_1 , \phi_2 \colon G \to \mathbb{R}_{\geq 0}$ on a unimodular locally compact group $G$ satisfy that $\phi_1^* * \phi_2^* (x)$ is finite almost everywhere.

  \begin{enumerate}
    \item \label{item:convolution-almost-finite-finite}
          $\phi_1^* * \phi_2^*$ is finite (everywhere) on $\mathbb{R} \setminus \{ 0 \}$.

    \item
          $\phi_1^* * \phi_2^*$ is continuous (everywhere) on $\mathbb{R} \setminus \{ 0 \}$.

    \item
          If $\phi_1^* * \phi_2^* (0)$ is also finite, then $\phi_1^* * \phi_2^* (x)$ is continuous at $x = 0$.

  \end{enumerate}

\end{lemma}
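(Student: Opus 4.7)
The plan is to exploit the integral representation
\begin{align*}
  \phi_1^* * \phi_2^* (x)
  = \int_{0}^{\infty} \int_{0}^{\infty} 1_{(-J_1(t_1), J_1(t_1))} * 1_{(-J_2(t_2), J_2(t_2))}(x) \, dt_1 dt_2
\end{align*}
from Example \ref{ex:rearrange-layer} \ref{item:rearrange-layer-convolution}, and combine it with the three facts recorded there: for every $(t_1, t_2)$ the integrand $h_{t_1,t_2}(x) := 1_{(-J_1(t_1), J_1(t_1))} * 1_{(-J_2(t_2), J_2(t_2))}(x)$ is (a) continuous in $x$, (b) even, and (c) non-increasing on $[0, \infty)$. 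Properties (b) and (c) are inherited by $\phi_1^* * \phi_2^*$ after integrating in $(t_1, t_2)$, and these symmetry/monotonicity properties will drive all three statements.

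For \ref{item:convolution-almost-finite-finite}, I would argue by contradiction: if $\phi_1^* * \phi_2^*(x_0) = \infty$ for some $x_0 \neq 0$, then by evenness and monotonicity on $[0,\infty)$ the function is identically $\infty$ on $[-|x_0|, |x_0|] \setminus \{0\}$, a set of positive Lebesgue measure, contradicting the hypothesis that $\phi_1^* * \phi_2^*$ is finite almost everywhere.

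For the continuity statements I would use dominated convergence on the double integral. Fix $x_0 \neq 0$ and choose $0 < \delta < |x_0|$. For every $x \in (x_0 - \delta, x_0 + \delta)$ and every $(t_1, t_2) \in [0, \infty)^2$, the evenness and monotonicity of $h_{t_1,t_2}$ yield $h_{t_1,t_2}(x) \leq h_{t_1,t_2}(|x_0| - \delta)$. Since $|x_0| - \delta \neq 0$, part \ref{item:convolution-almost-finite-finite} gives
\begin{align*}
  \int_{0}^{\infty} \int_{0}^{\infty} h_{t_1,t_2}(|x_0| - \delta) \, dt_1 dt_2
  = \phi_1^* * \phi_2^*(|x_0| - \delta)
  < \infty,
\end{align*}
providing an integrable majorant. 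Combined with the continuity of $h_{t_1,t_2}$ in $x$, the dominated convergence theorem yields continuity of $\phi_1^* * \phi_2^*$ at $x_0$. For statement (3), the same argument is carried out at $x_0 = 0$ with dominating function $h_{t_1,t_2}(0)$, whose double integral equals $\phi_1^* * \phi_2^*(0)$ and is finite by hypothesis.

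I do not expect any substantial obstacle here: once one observes that the kernel $h_{t_1,t_2}$ is explicitly even and monotone on $[0,\infty)$, the passage from ``finite almost everywhere'' to ``finite everywhere off the origin'' and to continuity is automatic. The only point that requires minor care is checking that the dominating function for the continuity argument is genuinely integrable in $(t_1,t_2)$, and this is exactly where part \ref{item:convolution-almost-finite-finite} (respectively, the extra hypothesis in (3)) is used.
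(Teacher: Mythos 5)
Your proposal is correct and follows essentially the same route as the paper: both rest on the representation \eqref{eq:rearrange-layer-convolution-represent} together with the evenness, monotonicity, and continuity of the kernels $1_{(-J_1(t_1),J_1(t_1))} * 1_{(-J_2(t_2),J_2(t_2))}$, deducing (1) from monotonicity (you phrase it as a contradiction, the paper argues directly) and (2) from dominated convergence with a majorant evaluated at a point closer to the origin. The only cosmetic difference is in (3), where you use dominated convergence with majorant $h_{t_1,t_2}(0)$ while the paper invokes monotone convergence; both are valid.
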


\begin{proof}

  \begin{enumerate}
    \item
          it suffices to show $\psi (x) := \phi_1^* * \phi_2^* (x) < \infty$ for any $x \in \mathbb{R} \setminus \{ 0 \}$.
          The function $\psi$ is finite almost everywhere and hence there exists $0 < x' < |x|$ with $\psi (x') < \infty$.
          Since $\psi$ is an even function and decreasing on $\mathbb{R}_{\geq 0}$ by Example \ref{ex:rearrange-layer} \ref{item:rearrange-layer-convolution}, we obtain
          \begin{align*}
            \psi (x) \leq \psi (x') < \infty.
          \end{align*}

    \item
          Let $x , x' \in \mathbb{R} \setminus \{ 0 \}$ with $|x - x'| \leq |x|/2$.
          We denote $1_{(-J_1 (t_1) , J_1 (t_1))} * 1_{(-J_2 (t_2) , J_2 (t_2))}$ by $\alpha_{t_1,t_2}$ in Example \ref{ex:rearrange-layer} \ref{item:rearrange-layer-convolution}.
          Thus,
          \begin{align*}
            \psi (x') = \int_{0}^{\infty} \int_{0}^{\infty} \alpha_{t_1,t_2} (x') dt_1 dt_2
          \end{align*}
          holds by \eqref{eq:rearrange-layer-convolution-represent}.
          Since $\alpha_{t_1,t_2}$ is an even function and decreasing on $\mathbb{R}_{\geq 0}$ by Example \ref{ex:rearrange-layer} \ref{item:rearrange-layer-convolution}, we have
          \begin{align*}
            \alpha_{t_1,t_2} (x') \leq \alpha_{t_1,t_2} \left( \frac{x}{2} \right).
          \end{align*}
          Since $\alpha_{t_1,t_2}$ is continuous and $\psi (x/2) < \infty$ holds by \ref{item:convolution-almost-finite-finite}, we obtain
          \begin{align*}
            \lim_{x' \to x} \psi (x')
            = \lim_{x' \to x} \int_{0}^{\infty} \int_{0}^{\infty} \alpha_{t_1,t_2} (x') dt_1 dt_2
            = \int_{0}^{\infty} \int_{0}^{\infty} \alpha_{t_1,t_2} (x) dt_1 dt_2
            = \psi (x)
          \end{align*}
          by the dominated convergence theorem.
          Thus, $\psi$ is continuous at $x$.

    \item
          We have
          \begin{align*}
            \psi (0) = \int_{0}^{\infty} \int_{0}^{\infty} \alpha_{t_1,t_2} (0) dt_1 dt_2
          \end{align*}
          by \eqref{eq:rearrange-layer-convolution-represent}.
          Since $\alpha_{t_1,t_2} (x)$ is a continuous even function which is decreasing on $x \geq 0$, we have
          \begin{align*}
            \lim_{x \to 0} \psi (x)
            = \lim_{x \to 0} \int_{0}^{\infty} \int_{0}^{\infty} \alpha_{t_1,t_2} (x) dt_1 dt_2
            = \int_{0}^{\infty} \int_{0}^{\infty} \alpha_{t_1,t_2} (0) dt_1 dt_2
            = \psi (0)
          \end{align*}
          by the monotone convergence theorem and \eqref{eq:rearrange-layer-convolution-represent}.
          Thus, $\psi (x)$ is also continuous at $x = 0$.
          \qedhere

  \end{enumerate}

\end{proof}

\section{The proof of Theorem \ref{thm:arrange-convex-inequality} in the case of \texorpdfstring{$f = f_t$}{f=ft}}
\label{sec:proof-ft}

We define the convex function $f_t \colon \mathbb{R}_{\geq 0} \to \mathbb{R}_{\geq 0}$ as
\begin{align*}
  f_t (y) :=
  \left\{
  \begin{aligned}
     & y - t &  & \text{if} \; y \geq t \\
     & 0     &  & \text{if} \; y \leq t
  \end{aligned}
  \right.
\end{align*}
for $t \geq 0$.
In this section, we show Theorem \ref{thm:arrange-convex-inequality} for $f = f_t$.
That is, we show
\begin{align}
  \| f_t \circ ( \phi_1 * \phi_2) \| \leq \| f_t \circ ( \phi_1^* * \phi_2^* ) \| \label{eq:arrange-convex-ft}
\end{align}
for any $t \geq 0$, where we denote the $L^1$-norm $\| \cdot \|_1$ by simply $\| \cdot \|$.
In Subsection \ref{subsec:proof-ft-lemma}, we will prepare a lemma (Lemma \ref{lem:ft-integral}) to prove Theorem \ref{thm:arrange-convex-inequality}.
In Subsection \ref{subsec:proof-ft-proof}, we will show Theorem \ref{thm:arrange-convex-inequality} by using this lemma.

\subsection{The lemma of Theorem \ref{thm:arrange-convex-inequality}}
\label{subsec:proof-ft-lemma}

In this subsection, we show the following lemma to prove Theorem \ref{thm:arrange-convex-inequality}.

\begin{lemma}
  \label{lem:ft-integral}

  We have
  \begin{align*}
    f_t \left( \int_{\Omega}^{} \alpha (\omega) d \omega \right)
    \leq \int_{\Omega}^{} f_{T (\omega)} \circ \alpha (\omega) d \omega, \quad
    t := \int_{\Omega}^{} T (\omega) d \omega
  \end{align*}
  for any measurable functions $T , \alpha \colon \Omega \to \mathbb{R}$ on a measure space $\Omega$.
  Furthermore, the equality holds if $T(\omega) \leq \alpha (\omega)$ almost everywhere.

\end{lemma}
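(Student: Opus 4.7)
The plan is to express $f_t$ in terms of the positive-part operation and then reduce the lemma to the trivial inequality $\xi \leq \max(\xi,0)$. A direct inspection of the piecewise definition of $f_t$ gives $f_t(y) = \max(y-t,\,0)$, so that $f_{T(\omega)}(\alpha(\omega)) = \max(\alpha(\omega) - T(\omega),\,0)$ and, using linearity of the integral together with $t = \int_\Omega T(\omega)\,d\omega$,
$$f_t\!\left(\int_\Omega \alpha(\omega)\,d\omega\right) = \max\!\left(\int_\Omega (\alpha(\omega) - T(\omega))\,d\omega,\;0\right).$$

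Writing $\beta := \alpha - T$, the lemma thus reduces to the single inequality
$$\max\!\left(\int_\Omega \beta(\omega)\,d\omega,\;0\right) \leq \int_\Omega \max(\beta(\omega),\,0)\,d\omega,$$
which I would derive from the two pointwise bounds $\beta(\omega) \leq \max(\beta(\omega),0)$ and $0 \leq \max(\beta(\omega),0)$. Integrating the first yields $\int_\Omega \beta \leq \int_\Omega \max(\beta,0)$ and the second yields $0 \leq \int_\Omega \max(\beta,0)$, so the maximum of the two left-hand sides is also bounded by $\int_\Omega \max(\beta,0)$. This handles the main inequality.

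For the equality statement under the assumption $T(\omega) \leq \alpha(\omega)$ a.e., note that $\beta(\omega) \geq 0$ almost everywhere, so $\max(\beta(\omega),0) = \beta(\omega)$ a.e., giving $\int_\Omega \max(\beta,0) = \int_\Omega \beta \geq 0$; both sides of the asserted inequality then equal $\int_\Omega \beta$. I do not anticipate any genuine obstacle: the only point to recognize is the identification of $f_t$ with the translated positive part, after which the statement is a routine consequence of the linearity and monotonicity of the integral. Measurability concerns are handled automatically since $\max(\beta,0)$ is measurable whenever $\beta$ is.
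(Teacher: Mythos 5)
Your proof is correct and is essentially the same argument as the paper's: the paper's two cases $\int_\Omega \alpha \leq t$ and $\int_\Omega \alpha \geq t$ are exactly the two branches of your $\max\left(\int_\Omega \beta, 0\right)$, and both arguments rest on the same pointwise bound $\alpha(\omega) - T(\omega) \leq f_{T(\omega)}(\alpha(\omega)) = \max(\alpha(\omega)-T(\omega),0)$ together with linearity and monotonicity of the integral. The equality case is handled identically.
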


\begin{proof}

  When $\int_{\Omega}^{} \alpha (\omega) d \omega \leq t$, we have
  \begin{align*}
    f_t \left( \int_{\Omega}^{} \alpha (\omega) d \omega \right)
    = 0
    \leq \int_{\Omega}^{} f_{T (\omega)} \circ \alpha (\omega) d \omega.
  \end{align*}
  When $\int_{\Omega}^{} \alpha (\omega) d \omega \geq t$, we have
  \begin{align*}
    f_t \left( \int_{\Omega}^{} \alpha (\omega) d \omega \right)
    = \int_{\Omega}^{} \alpha (\omega) d \omega - t
    = \int_{\Omega}^{} ( \alpha (\omega) - T ( \omega ) ) d \omega
    \leq \int_{\Omega}^{} f_{T (\omega)} \circ \alpha (\omega) d \omega.
  \end{align*}
  Furthermore, the equality holds if $T(\omega) \leq \alpha (\omega)$ almost everywhere.
\end{proof}

\begin{example}
  \label{ex:ft-integral-apply}

  Let $\mu$, $G$, $\phi_1$, $\phi_2$, $L_1$, $L_2$, $J_1 (t)$, and $J_2 (t)$ be as in Example \ref{ex:rearrange-layer} \ref{item:rearrange-layer-convolution}.
  When we fix $x_0 \geq 0$, we have
  \begin{align*}
    t
    := \phi_1^* * \phi_2^* (x_0)
    = \int_{0}^{\infty} \int_{0}^{\infty} T (t_1,t_2) dt_1 dt_2
  \end{align*}
  by \eqref{eq:rearrange-layer-convolution-represent}, where $T (t_1,t_2) := 1_{(-J_1 (t_1), J_1 (t_1))} * 1_{(-J_2 (t_2), J_2 (t_2))} (x_0)$.
  We apply Lemma \ref{lem:ft-integral} to $\Omega = \mathbb{R}_{\geq 0} \times \mathbb{R}_{\geq 0}$.

  \begin{enumerate}
    \item
          We fix $g \in G$ and let $\alpha (t_1,t_2) := 1_{L_1 (t_1)} * 1_{L_2 (t_2)} (g)$.
          \begin{align*}
            \int_{0}^{\infty} \int_{0}^{\infty} \alpha (t_1,t_2) dt_1 dt_2 = \phi_1 * \phi_2 (g)
          \end{align*}
          holds by Lemma \ref{lem:rearrange-property} \ref{item:rearrange-property-convolution}.
          Thus, we have
          \begin{align}
            f_t \circ (\phi_1 * \phi_2) (g)
            \leq \int_{0}^{\infty} \int_{0}^{\infty} f_{T(t_1,t_2)} \circ ( 1_{L_1 (t_1)} * 1_{L_2 (t_2)} ) (g) dt_1 dt_2 \label{eq:ft-integral-apply-convolution}
          \end{align}
          by Lemma \ref{lem:ft-integral}.

    \item
          We fix $x \in \mathbb{R}$ with $|x| \leq x_0$ and let $\alpha (t_1,t_2) := 1_{(-J_1(t_1),J_1(t_1))} * 1_{(-J_2(t_2),J_2(t_2))} (x)$.
          By \eqref{eq:rearrange-layer-convolution-represent}, $\phi_1^* * \phi_2^* (x)$ is given as
          \begin{align}
            \phi_1^* * \phi_2^* (x)
            = \int_{0}^{\infty} \int_{0}^{\infty} \alpha (t_1,t_2) dt_1 dt_2. \label{eq:convolution-layer-alpha}
          \end{align}
          Since $1_{(-J_1(t_1),J_1(t_1))} * 1_{(-J_2(t_2),J_2(t_2))}$ is an even function and decreasing on $\mathbb{R}_{\geq 0}$, we have $T (t_1,t_2) \leq \alpha (t_1,t_2)$ by $|x| \leq x_0$.
          Thus,
          \begin{align*}
            f_t \circ \left( \int_{0}^{\infty} \int_{0}^{\infty} \alpha (t_1,t_2) dt_1 dt_2 \right)
            = \int_{0}^{\infty} \int_{0}^{\infty} f_{T(t_1,t_2)} \circ ( \alpha ( t_1 , t_2 ) ) (x) dt_1 dt_2
          \end{align*}
          holds by applying Lemma \ref{lem:ft-integral} and hence we have
          \begin{align}
            f_t \circ ( \phi_1^* * \phi_2^* ) (x)
            = \int_{0}^{\infty} \int_{0}^{\infty} f_{T(t_1,t_2)} \circ ( 1_{(-J_1(t_1),J_1(t_1))} * 1_{(-J_2(t_2),J_2(t_2))} ) (x) dt_1 dt_2 \label{eq:ft-integral-apply-rearrange}
          \end{align}
          by \eqref{eq:convolution-layer-alpha}.

  \end{enumerate}

\end{example}

\subsection{The proof of Theorem \ref{thm:arrange-convex-inequality}}
\label{subsec:proof-ft-proof}

In this subsection, we show Theorem \ref{thm:arrange-convex-inequality} for $f = f_t$ in three steps.

\begin{enumerate}
  \item \label{item:ft-proof-character-finite}
        In the case where $\phi_1$ and $\phi_2$ are the characteristic functions of any measurable sets of finite volume:

        It suffices to show \eqref{eq:arrange-convex-ft} when $\| \phi_1 \| \leq \| \phi_2 \|$.
        If $\| \phi_1 \| \leq t$, then we have $\phi_1 * \phi_2 \leq t$ by $\phi_2 \leq 1$ and hence we obtain
        \begin{align*}
          \| f_t \circ ( \phi_1 * \phi_2 ) \| = 0 \leq \| f_t \circ ( \phi_1^* * \phi_2^* ) \|.
        \end{align*}
        If $t \leq \| \phi_1 \|$, then
        \begin{align*}
          \| f_t \circ ( \phi_1 * \phi_2 ) \|
          \leq ( \| \phi_1 \| - t ) ( \| \phi_2 \| - t )
          = \| f_t \circ ( \phi_1^* * \phi_2^* ) \|
        \end{align*}
        holds by \eqref{eq:supp-small} \cite[Theorem 1.1]{satomi2021inequality}.

  \item \label{item:ft-proof-character-general}
        In the case where $\phi_1$ and $\phi_2$ are the characteristic functions of any measurable sets (which are not necessary of finite volume):

        There exist increasing sequences $A_1^{(n)}, A_2^{(n)} \subset G$ of measurable sets of finite volume such that $1_{A_1^{(n)}}$ and $1_{A_2^{(n)}}$ converge pointwise to $\phi_1$ and $\phi_2$, respectively.
        Thus, we have
        \begin{align*}
          \lim_{n \to \infty} \left\| f_t \circ \left( 1_{A_1^{(n)}} * 1_{A_2^{(n)}} \right) \right\|
          = \| f_t \circ ( \phi_1 * \phi_2 ) \| , \quad
          \lim_{n \to \infty} \left\| f_t \circ \left( 1_{A_1^{(n)}}^* * 1_{A_2^{(n)}}^* \right) \right\|
          = \| f_t \circ ( \phi_1^* * \phi_2^* ) \|
        \end{align*}
        by Example \ref{ex:rearrange-layer} \ref{item:rearrange-layer-increase} and the monotone convergence theorem.
        In addition,
        \begin{align*}
          \left\| f_t \circ \left( 1_{A_1^{(n)}} * 1_{A_2^{(n)}} \right) \right\|
          \leq \left\| f_t \circ \left( 1_{A_1^{(n)}}^* * 1_{A_2^{(n)}}^* \right) \right\|
        \end{align*}
        holds for any $n$ by \ref{item:ft-proof-character-finite} and hence we obtain
        \begin{align*}
          \| f_t \circ ( \phi_1 * \phi_2 ) \|
          = \lim_{n \to \infty} \left\| f_t \circ \left( 1_{A_1^{(n)}} * 1_{A_2^{(n)}} \right) \right\|
          \leq \lim_{n \to \infty} \left\| f_t \circ \left( 1_{A_1^{(n)}}^* * 1_{A_2^{(n)}}^* \right) \right\|
          = \| f_t \circ ( \phi_1^* * \phi_2^* ) \|.
        \end{align*}

  \item
        In the general case:

        When
        \begin{align}
          \lim_{x \to \infty} \phi_1^* * \phi_2^* (x) < t \leq \phi_1^* * \phi_2^* (0) \label{eq:t-reduce}
        \end{align}
        does not hold, it can be reduced to the case of \eqref{eq:t-reduce} (by using the monotone convergence theorem if necessary) or we have $\| f_t \circ (\phi_1^* * \phi_2^* ) \| = \infty$.
        In the case of \eqref{eq:t-reduce}, there exists $x_0 \geq 0$ with $t = \phi_1^* * \phi_2^* (x_0)$ by Lemma \ref{lem:convolution-almost-finite} and the intermediate value theorem.
        Let $T (t_1,t_2)$ be as in Example \ref{ex:ft-integral-apply}.
        Then
        \begin{align*}
          \| f_t \circ (\phi_1 * \phi_2) \|
          \leq \int_{0}^{\infty} \int_{0}^{\infty} \| f_{T(t_1,t_2)} \circ ( 1_{L_1 (t_1)} * 1_{L_2 (t_2)} ) \| dt_1 dt_2
        \end{align*}
        holds by integrating \eqref{eq:ft-integral-apply-convolution} over $g \in G$.
        We have
        \begin{align*}
          \| f_{T(t_1,t_2)} \circ ( 1_{L_1 (t_1)} * 1_{L_2 (t_2)} ) \|
          \leq \| f_{T(t_1,t_2)} \circ ( 1_{L_1 (t_1)}^* * 1_{L_2 (t_2)}^* ) \|
        \end{align*}
        by \ref{item:ft-proof-character-general} and hence
        \begin{align*}
          \| f_{T(t_1,t_2)} \circ ( 1_{L_1 (t_1)} * 1_{L_2 (t_2)} ) \|
          \leq \| f_{T(t_1,t_2)} \circ ( 1_{(-J_1(t_1),J_1(t_1))} * 1_{(-J_2(t_2),J_2(t_2))} ) \|
        \end{align*}
        by \eqref{eq:phi1-phi2-layer}.
        Thus, we obtain
        \begin{align*}
          \| f_t \circ (\phi_1 * \phi_2) \|
          \leq \int_{0}^{\infty} \int_{0}^{\infty} \| f_{T(t_1,t_2)} \circ ( 1_{(-J_1(t_1),J_1(t_1))} * 1_{(-J_2(t_2),J_2(t_2))} ) \| dt_1 dt_2
          = \| f_t \circ (\phi_1^* * \phi_2^* ) \|
        \end{align*}
        by \eqref{eq:ft-integral-apply-rearrange}.

\end{enumerate}

Therefore, we have \eqref{eq:arrange-convex-ft} and hence Theorem \ref{thm:arrange-convex-inequality} is proved for $f = f_t$.

\section{The proof of Theorem \ref{thm:arrange-convex-inequality} when \texorpdfstring{$\phi_1 * \phi_2$}{phi1*phi2} is integrable}
\label{sec:integrable-proof}

In this section, we show Theorem \ref{thm:arrange-convex-inequality} by an argument of Wang--Madiman when $\phi_1 * \phi_2$ is integrable.

\begin{fact}[{\cite[Section VII]{MR3252379}}]
  \label{fact:convex-ft-approximate}

  Suppose a convex function $f \colon \mathbb{R}_{\geq 0} \to \mathbb{R}$ with $f (0)=0$ is continuous at $0$ and satisfies $f_+' (0) > - \infty$, where $f_+'$ denotes the right derivative.
  Then there exists a Borel measure $\nu$ on $\mathbb{R}_{>0}$ such that $\nu (( t_1 , t_2 ]) = f_+' (t_2) - f_+' (t_1)$ for any $0 \leq t_1 \leq t_2$.
  Furthermore, we have
  \begin{align*}
    f(y) = f_+' (0) y  + \int_{0}^{\infty} f_t (y) d \nu (t)
  \end{align*}
  for any $y \geq 0$.

\end{fact}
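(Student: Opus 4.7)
The plan is to build the measure $\nu$ from the right derivative $f_+'$ and then verify the integral representation by a Fubini-style argument, using the fact that a convex function is the integral of its (right) derivative.

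First I would recall the classical structure of convex functions on $\mathbb{R}_{\geq 0}$: since $f$ is convex, the right derivative $f_+' \colon \mathbb{R}_{\geq 0} \to \mathbb{R} \cup \{ \infty \}$ exists everywhere, is non-decreasing, and is right-continuous. The assumption $f_+' (0) > - \infty$ together with monotonicity guarantees $f_+' (t) \in \mathbb{R}$ for every $t \geq 0$. Then I would construct $\nu$ as the Lebesgue--Stieltjes measure on $\mathbb{R}_{> 0}$ associated with the right-continuous non-decreasing function $t \mapsto f_+' (t) - f_+' (0)$; this immediately yields $\nu ( ( t_1 , t_2 ] ) = f_+' ( t_2 ) - f_+' ( t_1 )$ for $0 \leq t_1 \leq t_2$.

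Next I would verify the representation formula. Fix $y \geq 0$. Observe that $f_t (y) = (y - t)_+$, so $f_t (y) = 0$ when $t \geq y$ and
\begin{align*}
  \int_{0}^{\infty} f_t (y) \, d \nu (t)
  = \int_{(0, y]} (y - t) \, d \nu (t).
\end{align*}
Writing $y - t = \int_{t}^{y} ds$ and applying Fubini's theorem (the integrand is non-negative),
\begin{align*}
  \int_{(0, y]} (y - t) \, d \nu (t)
  = \int_{0}^{y} \nu ( (0, s] ) \, ds
  = \int_{0}^{y} ( f_+' (s) - f_+' (0) ) \, ds.
\end{align*}
Finally, the continuity of $f$ at $0$, together with the fact that a convex function on $\mathbb{R}_{\geq 0}$ is locally absolutely continuous on its interior and is the indefinite integral of its right derivative (a standard result), gives $f (y) - f (0) = \int_{0}^{y} f_+' (s) \, ds$. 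Combined with $f(0) = 0$, this yields $f (y) = f_+' (0) \, y + \int_{0}^{\infty} f_t (y) \, d \nu (t)$.

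I do not expect any serious obstacle: each step is a textbook fact about convex functions and Lebesgue--Stieltjes integrals. The only delicate point is to keep track of right- versus left-continuity so that the Lebesgue--Stieltjes measure assigns the stated value on half-open intervals $(t_1, t_2]$; this is why $\nu$ is built from the right-continuous version $f_+'$ rather than the left derivative $f_-'$. An equivalent, slightly cleaner route would be to note that $f - f_+' (0) \cdot \mathrm{id}$ is convex, non-decreasing, vanishes at $0$, and has right derivative $f_+' - f_+' (0)$ that is zero at $0$, and then apply the representation of such functions as integrals of hinges $f_t$ against the measure $\nu$; I would choose whichever formulation reads most transparently in the final write-up.
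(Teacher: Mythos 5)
Your proof is correct. Note that the paper does not actually prove this statement: it is quoted as a Fact with a citation to Wang--Madiman, so there is no internal argument to compare against. Your route --- building $\nu$ as the Lebesgue--Stieltjes measure of the right-continuous non-decreasing function $t \mapsto f_+'(t) - f_+'(0)$, reducing $\int_0^\infty f_t(y)\,d\nu(t)$ to $\int_0^y (f_+'(s) - f_+'(0))\,ds$ by Tonelli, and invoking $f(y) = \int_0^y f_+'(s)\,ds$ (local absolute continuity plus continuity at $0$) --- is the standard one and supplies a complete, self-contained justification of the cited fact; the one point worth making explicit in a write-up is that the hypothesis of continuity of $f$ at $0$ is what guarantees $f_+'$ is right-continuous at $0$ and that the fundamental-theorem identity extends down to the endpoint.
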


\begin{example}
  \label{ex:convex-decompose}

  Suppose $G$, $\phi_1$, and $\phi_2$ are as in Theorem \ref{thm:arrange-convex-inequality} and $\phi_1 * \phi_2$ is integrable.
  Since
  \begin{align*}
    f \circ ( \phi_1 * \phi_2 ) (g)
    = f_+' (0) \phi_1 * \phi_2 (g) + \int_{0}^{\infty} f_t \circ (\phi_1 * \phi_2) (g) d\nu (t)
  \end{align*}
  by Fact \ref{fact:convex-ft-approximate}, we have
  \begin{align}
    \int_{G}^{} f \circ ( \phi_1 * \phi_2 ) (g) dg
    = f_+' (0) \| \phi_1 * \phi_2 \| + \int_{0}^{\infty} \| f_t \circ (\phi_1 * \phi_2) \| d\nu (t) \label{eq:convex-decompose-equation}
  \end{align}
  by integrating over $g \in G$.
  Similarly, we have
  \begin{align}
    \int_{\mathbb{R}}^{} f \circ ( \phi_1^* * \phi_2^* ) (x) dx
    = f_+' (0) \| \phi_1^* * \phi_2^* \| + \int_{0}^{\infty} \| f_t \circ (\phi_1^* * \phi_2^*) \| d\nu (t). \label{eq:convex-decompose-rearrange}
  \end{align}
  Since $\phi_1 * \phi_2$ and $\phi_1^* * \phi_2^*$ are integrable by Fubini's theorem and
  \begin{align}
    \| \phi_1 * \phi_2 \| =  \| \phi_1^* * \phi_2^* \|, \label{eq:Fubini}
  \end{align}
  \eqref{eq:convex-decompose-equation} and \eqref{eq:convex-decompose-rearrange} are well-defined.
  Thus, we obtain \eqref{eq:arrange-convex-inequality-main} by \eqref{eq:arrange-convex-ft} and \eqref{eq:Fubini}.

\end{example}

Theorem \ref{thm:arrange-convex-inequality} follows from Example \ref{ex:convex-decompose} when $\phi_1 * \phi_2$ is integrable, $f(y)$ is continuous at $y = 0$, and $f_+' (0) > - \infty$.
Here we show Theorem \ref{thm:arrange-convex-inequality} by using Example \ref{ex:convex-decompose} when $f (y)$ is not continuous at $y = 0$ or $f_+' (0) = - \infty$.
We define $f_{(n)} \colon \mathbb{R}_{\geq 0} \to \mathbb{R}$ as
\begin{align*}
  f_{(n)} (y)
  := \left\{
  \begin{aligned}
     & n f \left( \frac{1}{n} \right) y &  & \text{if} \; y \leq \frac{1}{n} \\
     & f (y)                            &  & \text{if} \; y \geq \frac{1}{n}
  \end{aligned}
  \right. .
\end{align*}
The function $f_{(n)} (y)$ is convex and continuous at $y = 0$.
We have
\begin{align}
  f_{(n)+}' (0) = n f \left( \frac{1}{n} \right) > - \infty . \label{eq:fn-right-derivative}
\end{align}
Since $f$ is not continuous at $y = 0$ or $f_+' (0) = - \infty$, there exists $n' \in \mathbb{Z}_{\geq 1}$ such that $f$ is negative on $(0,1/n']$.
We have $0 \geq f_{(n)} \geq f$ on $[0,1/n']$ for any $n \geq n'$ by the convexity of $f$.

\begin{remark}
  \label{rem:rearrange-convex-remark-finite}

  Since Theorem \ref{thm:arrange-convex-inequality} is clear when $\int_{G}^{} f \circ ( \phi_1 * \phi_2 ) (g) dg = - \infty$ or $\int_{\mathbb{R}}^{} f \circ ( \phi_1^* * \phi_2^* ) (x) dx = \infty$, it suffices to show Theorem \ref{thm:arrange-convex-inequality} when
  \begin{align}
    \int_{G}^{} f \circ ( \phi_1 * \phi_2 ) (g) dg              & > - \infty, \label{eq:rearrange-convex-remark-finite-negative} \\
    \int_{\mathbb{R}}^{} f \circ ( \phi_1^* * \phi_2^* ) (x) dx & < \infty. \label{eq:rearrange-convex-remark-finite-positive}
  \end{align}

\end{remark}

For any $n \geq n'$, we have $0 \geq f_{(n)} \geq f$ on $[0,1/n']$ and hence
\begin{align}
  \int_{G}^{} f_{(n)} \circ ( \phi_1 * \phi_2 ) (g) dg
  \leq \int_{\mathbb{R}}^{} f_{(n)} \circ ( \phi_1^* * \phi_2^* ) (x) dx
  < \infty \label{eq:fn-compare-finite}
\end{align}
by Example \ref{ex:convex-decompose}, \eqref{eq:fn-right-derivative} and \eqref{eq:rearrange-convex-remark-finite-positive}.
Since the pointwise decreasing sequence $f_{(n)}$ converges to $f$, we have
\begin{align}
  \lim_{n \to \infty} \int_{G}^{} f_{(n)} \circ ( \phi_1 * \phi_2 ) (g) dg
   & = \int_{G}^{} f \circ ( \phi_1 * \phi_2 ) (g) dg, \label{eq:phi1-phi2-converge}               \\
  \lim_{n \to \infty} \int_{\mathbb{R}}^{} f_{(n)} \circ ( \phi_1^* * \phi_2^* ) (x) dx
   & = \int_{\mathbb{R}}^{} f \circ ( \phi_1^* * \phi_2^* ) (x) dx \label{eq:phi1*-phi2*-converge}
\end{align}
by the monotone convergence theorem.
Thus, we have \eqref{eq:arrange-convex-inequality-main} by \eqref{eq:fn-compare-finite}, \eqref{eq:phi1-phi2-converge} and \eqref{eq:phi1*-phi2*-converge} and hence we obtain Theorem \ref{thm:arrange-convex-inequality} when $\phi_1 * \phi_2$ is integrable.

\section{The proof of Theorem \ref{thm:arrange-convex-inequality} in the general case}
\label{sec:rearrange-proof-general}

In this section, we show Theorem \ref{thm:arrange-convex-inequality} when $\phi_1 * \phi_2$ may not be integrable.
We will show Theorem \ref{thm:arrange-convex-inequality} when $f \circ ( \phi_1^* * \phi_2^*)$ is positive in Subsection \ref{subsec:proof-positive}, and when $f$ is monotonically decreasing in Subsection \ref{subsec:decrease-proof}.
In Subsection \ref{subsec:rearrange-proof-general-complete}, we will complete the proof of Theorem \ref{thm:arrange-convex-inequality} by showing that the cases of Section \ref{sec:integrable-proof}, Subsection \ref{subsec:proof-positive}, and Subsection \ref{subsec:decrease-proof} exhaust all the cases of Theorem \ref{thm:arrange-convex-inequality}.

\subsection{The case where \texorpdfstring{$f \circ ( \phi_1^* * \phi_2^*)$}{f◦(φ1**φ2*)} is positive}
\label{subsec:proof-positive}

When $f \circ ( \phi_1^* * \phi_2^*)$ is positive, Theorem \ref{thm:arrange-convex-inequality} can be obtained by a similar argument as in Section \ref{sec:integrable-proof}.
In this case, we may assume that $f$ is positive by replacing $f$ by $\max (f,0)$, and \eqref{eq:rearrange-convex-remark-finite-positive} holds by Remark \ref{rem:rearrange-convex-remark-finite}.
Since $f (0) = 0$ and $f$ is positive, $f (y)$ is continuous at $y = 0$ and $f_{+}' (0) \geq 0$ holds.
Thus, there exists a Borel measure $\nu$ in Fact \ref{fact:convex-ft-approximate}.
Since \eqref{eq:convex-decompose-rearrange} holds by a similar argument as in Example \ref{ex:convex-decompose} (both sides of \eqref{eq:convex-decompose-rearrange} are well-defined by \eqref{eq:rearrange-convex-remark-finite-positive}), both sides of \eqref{eq:convex-decompose-equation} are well-defined and \eqref{eq:convex-decompose-equation} holds by \eqref{eq:arrange-convex-ft} and the dominated convergence theorem.
Thus, we have \eqref{eq:arrange-convex-inequality-main} by \eqref{eq:arrange-convex-ft} and hence we obtain Theorem \ref{thm:arrange-convex-inequality} when $f \circ ( \phi_1^* * \phi_2^*)$ is positive.

\subsection{The case where \texorpdfstring{$f$}{f} is monotonically decreasing}
\label{subsec:decrease-proof}

In this subsection, we show Theorem \ref{thm:arrange-convex-inequality} when $f$ is monotonically decreasing.
There exist pointwise increasing sequences $\phi_{1,n}, \phi_{2,n} \colon G \to \mathbb{R}$ of integrable functions converging to $\phi_1$ and $\phi_2$, respectively.
By Example \ref{ex:rearrange-layer} \ref{item:rearrange-layer-increase}, $\phi_{1,n} * \phi_{2,n}$ and $\phi_{1,n}^* * \phi_{2,n}^*$ converge pointwise to $\phi_1 * \phi_2$ and $\phi_1^* * \phi_2^*$, respectively.
Since $f$ is monotonically decreasing, $f \circ ( \phi_{1,n} * \phi_{2,n} )$ and $f \circ ( \phi_{1,n}^* * \phi_{2,n}^* )$ are pointwise decreasing sequences.
The convex function $f$ is negative by $f(0)=0$ and hence
\begin{align*}
  \int_{G}^{} f \circ ( \phi_1 * \phi_2 ) (g) dg
   & = \lim_{n \to \infty} \int_{G}^{} f \circ ( \phi_{1,n} * \phi_{2,n} ) (g) dg,             \\
  \int_{\mathbb{R}}^{} f \circ ( \phi_1^* * \phi_2^* ) (x) dx
   & = \lim_{n \to \infty} \int_{\mathbb{R}}^{} f \circ ( \phi_{1,n}^* * \phi_{2,n}^* ) (x) dx
\end{align*}
by the monotone convergence theorem.
Since $\phi_{1,n} * \phi_{2,n}$ is integrable for any $n$, we have
\begin{align*}
  \int_{G}^{} f \circ ( \phi_{1,n} * \phi_{2,n} ) (g) dg
  \leq \int_{\mathbb{R}}^{} f \circ ( \phi_{1,n}^* * \phi_{2,n}^* ) (x) dx
\end{align*}
by Section \ref{sec:integrable-proof}.
Thus, \eqref{eq:arrange-convex-inequality-main} holds and hence we obtain Theorem \ref{thm:arrange-convex-inequality} when $f$ is monotonically decreasing.

\subsection{The completion of Theorem \ref{thm:arrange-convex-inequality}}
\label{subsec:rearrange-proof-general-complete}

Theorem \ref{thm:arrange-convex-inequality} was obtained in some cases in Section \ref{sec:integrable-proof}, Subsection \ref{subsec:proof-positive}, and Subsection \ref{subsec:decrease-proof}.
In this subsection, we complete the proof of Theorem \ref{thm:arrange-convex-inequality} by showing that these cases exhaust all the cases of Theorem \ref{thm:arrange-convex-inequality}.
That is, we show the following lemma.

\begin{lemma}
  \label{lem:convex-convolution-all-case}

  Suppose a convex function $f \colon \mathbb{R}_{\geq 0} \to \mathbb{R}$ with $f(0) = 0$ is not monotonically decreasing.
  We let $t > 0$ with $f (t) < 0$.

  \begin{enumerate}
    \item \label{item:convex-convolution-all-case-zero}
          There exists $t' > t$ with $f(t') = 0$.
          Furthermore,
          \begin{align}
            f (y) \leq 0 \label{eq:convex-convolution-all-case-zero-negative}
          \end{align}
          holds for any $0 \leq y \leq t'$, and
          \begin{align}
            f (y) \geq 0 \label{eq:convex-convolution-all-case-zero-positive}
          \end{align}
          holds for any $y \geq t'$.

    \item \label{item:convex-convolution-all-case-bound}
          Let $t'$ be as in \ref{item:convex-convolution-all-case-zero}.
          Then we have
          \begin{align*}
            y \leq \frac{t \max ( - f ( y ) , 0 )}{- f ( t )} + \frac{t' f_t ( y )}{t' - t}
          \end{align*}
          for any $y \geq 0$.

    \item \label{item:convex-convolution-all-case-large-part}
          Suppose measurable functions $\phi_1, \phi_2 \colon G \to \mathbb{R}_{\geq 0}$ on a unimodular locally compact group $G$ satisfy \eqref{eq:arrange-convex-inequality-finite} and \eqref{eq:rearrange-convex-remark-finite-positive}.
          Then we have
          \begin{align*}
            \| f_{\phi_1^* * \phi_2^* (x_0)} \circ ( \phi_1 * \phi_2 ) \|
            \leq \| f_{\phi_1^* * \phi_2^* (x_0)} \circ ( \phi_1^* * \phi_2^* ) \|
            < \infty
          \end{align*}
          for any $x_0 \in \mathbb{R}$ with $f \circ ( \phi_1^* * \phi_2^* ) (x_0) < 0$.

    \item
          Suppose there exists $x_0 \in \mathbb{R}$ in \ref{item:convex-convolution-all-case-large-part}.
          If \eqref{eq:rearrange-convex-remark-finite-negative} holds, then $\phi_1 * \phi_2$ is integrable.

  \end{enumerate}

\end{lemma}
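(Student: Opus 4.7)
My plan is to handle the four claims in order, with \ref{item:convex-convolution-all-case-bound} providing the key quantitative inequality that drives both \ref{item:convex-convolution-all-case-large-part} and the final claim. For \ref{item:convex-convolution-all-case-zero}, the convexity of $f$ and the failure of monotonic decrease force the (non-decreasing) right derivative $f_+'$ to become strictly positive eventually, so $f(y)\to\infty$ as $y\to\infty$. Combined with $f(0)=0$, $f(t)<0$, and the continuity of convex functions on $\mathbb{R}_{>0}$, the intermediate value theorem produces $t'>t$ with $f(t')=0$. The two sign statements \eqref{eq:convex-convolution-all-case-zero-negative} and \eqref{eq:convex-convolution-all-case-zero-positive} follow from two-point convexity: for $0\le y\le t'$ write $y$ as a convex combination of $0$ and $t'$, and for $y\ge t'$ write $t'$ as a convex combination of $0$ and $y$, then use $f(0)=f(t')=0$.

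For \ref{item:convex-convolution-all-case-bound}, I would split into the three regions $y\le t$, $t\le y\le t'$, and $y\ge t'$. In the first, $f_t(y)=0$ and the bound reduces to the secant estimate $-f(y)\ge(y/t)|f(t)|$. In the second, writing $y$ as the convex combination $\frac{t'-y}{t'-t}t+\frac{y-t}{t'-t}t'$ and using $f(t')=0$ yields $-f(y)\ge\frac{t'-y}{t'-t}|f(t)|$; plugging into the target right-hand side collapses to $y$ after elementary algebra. In the third, $\max(-f(y),0)=0$, and the inequality reduces to $y(t'-t)\le t'(y-t)$, which is equivalent to the standing assumption $y\ge t'$.

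For \ref{item:convex-convolution-all-case-large-part}, the first inequality is \eqref{eq:arrange-convex-ft} applied with $t:=\phi_1^**\phi_2^*(x_0)=:t_0$, already established in Section \ref{sec:proof-ft}. The substance is finiteness of the right-hand side. Using the convexity of $f$ together with the secant lower bound $f_+'(t')\ge |f(t_0)|/(t'-t_0)$, I obtain the pointwise inequality
\begin{align*}
f_{t_0}(y)\le (t'-t_0)\,1_{\{y>t_0\}}+\frac{t'-t_0}{|f(t_0)|}\max(f(y),0)
\end{align*}
for every $y\ge 0$. Integrating this against $\phi_1^**\phi_2^*$ on $\mathbb{R}$, the first term is finite because $\{\phi_1^**\phi_2^*>t_0\}$ is contained in the bounded interval $(-|x_0|,|x_0|)$ by the monotonicity of $\phi_1^**\phi_2^*$ on $\mathbb{R}_{\ge 0}$ noted in Example \ref{ex:rearrange-layer} \ref{item:rearrange-layer-convolution}, and the second is finite because \eqref{eq:rearrange-convex-remark-finite-positive} forces $\int_\mathbb{R}\max(f\circ(\phi_1^**\phi_2^*)(x),0)\,dx<\infty$.

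For the last claim, I would integrate the inequality of \ref{item:convex-convolution-all-case-bound} at $t=t_0$ and $y=\phi_1*\phi_2(g)$ over $g\in G$, producing
\begin{align*}
\int_G\phi_1*\phi_2(g)\,dg\le\frac{t_0}{|f(t_0)|}\int_G\max(-f\circ(\phi_1*\phi_2)(g),0)\,dg+\frac{t'}{t'-t_0}\|f_{t_0}\circ(\phi_1*\phi_2)\|.
\end{align*}
The first integral on the right is finite by \eqref{eq:rearrange-convex-remark-finite-negative} and the second by \ref{item:convex-convolution-all-case-large-part}, so $\phi_1*\phi_2$ is integrable. The main obstacle I anticipate is the finiteness step of \ref{item:convex-convolution-all-case-large-part}: one must produce an upper bound on $f_{t_0}$ involving $\max(f,0)$ and a bounded indicator, rather than trying to invert \ref{item:convex-convolution-all-case-bound}, which only yields a lower bound on $f_t(y)$.
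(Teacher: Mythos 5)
Your proposal is correct and follows essentially the same route as the paper: the same three-region case analysis for \ref{item:convex-convolution-all-case-bound}, the same use of \eqref{eq:arrange-convex-ft} plus the secant-slope comparison of $f$ at $t'$ for \ref{item:convex-convolution-all-case-large-part}, and the same integration of \ref{item:convex-convolution-all-case-bound} for the final claim. The only cosmetic difference is in \ref{item:convex-convolution-all-case-large-part}, where you package the finiteness argument as a single pointwise bound $f_{t_0}(y)\leq (t'-t_0)1_{\{y>t_0\}}+\frac{t'-t_0}{|f(t_0)|}\max(f(y),0)$ rather than the paper's two-step estimate via $\|f_{t'}\circ(\phi_1^**\phi_2^*)\|$; this is arguably slightly cleaner but not a different method.
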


\begin{proof}

  \begin{enumerate}
    \item
          Since the convex function $f$ is not monotonically decreasing, there exists $t'' > t$ with $f(t'') > 0$.
          The convex function $f$ is continuous on $\mathbb{R}_{> 0}$ and hence there exists $t' > t$ with $f (t') = 0$ by the intermediate value theorem.
          Since $f (0) = f (t') = 0$ holds and $f$ is convex, we obtain $f (y) \leq 0$ for any $0 \leq y \leq t'$, and $f (y) \geq 0$ for any $y \geq t'$.

    \item
          When $0 \leq y \leq t$, we have \eqref{eq:convex-convolution-all-case-zero-negative} and hence
          \begin{align*}
            y
            \leq \frac{t f(y)}{f (t)}
            = \frac{t \max ( - f ( y ) , 0 )}{- f ( t )}
            \leq \frac{t \max ( - f ( y ) , 0 )}{- f ( t )} + \frac{t' f_t ( y )}{t' - t}
          \end{align*}
          by the convexity of $f$.

          When $t \leq y \leq t'$, we have
          \begin{align}
            y
            = \frac{t (t' - y) + t' (y - t)}{t' - t}
            = \frac{t (t' - y) + t' f_t (y)}{t' - t}. \label{eq:y-decompose}
          \end{align}
          Since $f$ is convex,
          \begin{align*}
            \frac{t' - y}{t' - t}
            \leq \frac{f(y)}{f(t)}
          \end{align*}
          holds by \eqref{eq:convex-convolution-all-case-zero-negative}.
          Thus, we obtain
          \begin{align*}
            y
            = \frac{t (t' - y) + t' f_t (y)}{t' - t}
            \leq \frac{t \max ( - f ( y ) , 0 )}{- f ( t )} + \frac{t' f_t ( y )}{t' - t}.
          \end{align*}
          When $ t' \leq y$, we also have \eqref{eq:y-decompose} and hence we obtain
          \begin{align*}
            y
            \leq \frac{t' f_t (y)}{t' - t}
            \leq \frac{t \max ( - f ( y ) , 0 )}{- f ( t )} + \frac{t' f_t ( y )}{t' - t}.
          \end{align*}

    \item
          Let $\beta := \phi_1 * \phi_2$, $\psi := \phi_1^* * \phi_2^*$ and $t := \psi (x_0)$.
          Since $\psi$ is an even function by Example \ref{ex:rearrange-layer} \ref{item:rearrange-layer-convolution}, we may assume $x_0 \geq 0$.
          We have $\| f_t \circ \beta \| \leq \| f_t \circ \psi \|$ by \eqref{eq:arrange-convex-ft} and hence it suffices to show $\| f_t \circ \psi \| < \infty$.
          Let $t'$ be as in \ref{item:convex-convolution-all-case-zero}.
          We have
          \begin{align*}
            f_{t'} (y) \leq \frac{(t'-t) f(y)}{- f(t)}
          \end{align*}
          for any $y \geq t'$ by the convexity of $f$.
          Thus,
          \begin{align*}
            \| f_{t'} \circ \psi \| \leq \frac{t'-t}{- f(t)} \int_{G}^{} f \circ \psi (g) dg
          \end{align*}
          holds and hence we have $\| f_{t'} \circ \psi \| < \infty$ by \eqref{eq:rearrange-convex-remark-finite-positive}.
          Since $\psi$ is an even function which is monotonically decreasing on $\mathbb{R}_{\geq 0}$ by Example \ref{ex:rearrange-layer} \ref{item:rearrange-layer-convolution}, we obtain
          \begin{align*}
            \| f_t \circ \psi \|
            = \int_{-x_0}^{x_0} ( \psi (x) - t ) dx
            \leq \| f_{t'} \circ \psi \| + ( t' - t ) \int_{-x_0}^{x_0} dx
            < \infty .
          \end{align*}

    \item
          Let $\beta$, $t$, and $t'$ be as in \ref{item:convex-convolution-all-case-large-part}.
          Since
          \begin{align*}
            \| \beta \|
            \leq \frac{t \| \max (- f \circ \beta , 0) \|}{- f(t)} + \frac{t' \| f_t \circ \beta \|}{t' - t}
          \end{align*}
          by \ref{item:convex-convolution-all-case-bound} and \eqref{eq:convex-convolution-all-case-zero-positive}, we obtain $\| \beta \| < \infty$ by \ref{item:convex-convolution-all-case-large-part} and \eqref{eq:rearrange-convex-remark-finite-negative}.
          \qedhere

  \end{enumerate}

\end{proof}

By Lemma \ref{lem:convex-convolution-all-case}, the cases of Section \ref{sec:integrable-proof}, Subsection \ref{subsec:proof-positive}, and Subsection \ref{subsec:decrease-proof} exhaust all the cases of Theorem \ref{thm:arrange-convex-inequality}.
Thus, we complete the proof of Theorem \ref{thm:arrange-convex-inequality}.

\section*{Acknowledgement}

This work was supported by JSPS KAKENHI Grant Number JP19J22628 and Leading Graduate Course for Frontiers of Mathematical Sciences and Physics (FMSP).
The author would like to thank his advisor Toshiyuki Kobayashi for his support.
The author is also grateful to Yuichiro Tanaka, Toshihisa Kubo and the anonymous referees for their careful comments.

\printbibliography

\noindent
Takashi Satomi: Graduate School of Mathematical Sciences, The University of Tokyo, 3-8-1 Komaba Meguro-ku Tokyo 153-8914, Japan.

\noindent
E-mail: tsatomi@ms.u-tokyo.ac.jp

\end{document}